\documentclass[reqno,11pt]{amsart}
\usepackage{xcolor}
\usepackage[top=2.0cm,bottom=2.0cm,left=3cm,right=3cm]{geometry}
\usepackage{amsthm,amsmath,amssymb,dsfont}
\usepackage{mathrsfs,amsfonts,functan,extarrows,mathtools}

\usepackage[colorlinks,
linkcolor=red,
citecolor=blue
]{hyperref}

\usepackage{xcolor}
\usepackage{cite}
\usepackage{indentfirst, latexsym, amssymb, enumerate,amsmath,graphicx}
\usepackage{float}
\usepackage{relsize}
\usepackage{marginnote}
\usepackage{stmaryrd}
\usepackage{esint}
\usepackage{graphicx}
\usepackage{bm}
\usepackage{caption}
\usepackage{subfigure}
\usepackage{cite}
\usepackage{color}
\usepackage{graphicx}
\usepackage{subfigure}
\usepackage{float}
\usepackage{paralist}
\usepackage{indentfirst}
\usepackage{cite}
\usepackage{mathrsfs}
\usepackage{amsfonts}
\allowdisplaybreaks %[4]
\theoremstyle{plain}
\newtheorem{thm}{Theorem}[section]
\newtheorem{cor}[thm]{Corollary}
\newtheorem{lem}[thm]{Lemma}
\newtheorem{prop}[thm]{Proposition}
\newtheorem{rem}{Remark}[section]

\newtheorem{defn}{Definition}[section]

\numberwithin{equation}{section}

\DeclareMathOperator{\dive}{div}

\usepackage{appendix}
\usepackage{xcolor}

\DeclareMathOperator{\A}{\mathcal{A}}

\DeclareMathOperator{\K}{\mathcal{K}}

\newcommand{\dv}{{\rm div\,}}

\newcommand{\eps}{{\epsilon}}

\begin{document}

\title[low mach number limit for the NSK system] {Low Mach number limit for the Navier--Stokes--Korteweg equations with a stationary force}

\author[J. Ni]{Jinkai Ni$^*$}  \thanks{$^*$\! Corresponding author}
\address[JKN]{School  of Mathematics, Nanjing University, Nanjing 
 210093, P. R. China}
\email{jinkaini123@gmail.com}

\author[L. Wang] {Luqi Wang}
\address[LQW]{School of Mathematics, Nanjing University, Nanjing
 210093, P. R. China}
\email{wangluqi@nju.edu.cn}

\author[Y. Zhang]{Yichi Zhang}
\address[YCZ]{The Institute of Mathematical Sciences, The Chinese University of Hong Kong, Hong Kong, China}
\email{ZhangYichi@link.cuhk.edu.hk}

\author[Z. Zhang]{Zhipeng Zhang}   
\address[ZPZ]{School of Mathematical Sciences, Ocean University of China, Qingdao
 266100, P. R. China}
\email{zhangzp@ouc.edu.cn}

\begin{abstract}
In this paper, we investigate the low Mach number limit for the three-dimensional compressible Navier--Stokes--Korteweg equations in the whole space under a small stationary external force. We first construct a family of small stationary solutions uniformly with respect to the Mach number $\eps$ and prove that both the stationary density fluctuation and the compressible component of the stationary velocity are of order $\eps^2$. 
For ill-prepared non-stationary perturbations around these stationary solutions, we establish the existence and uniqueness of global strong solution by combining uniform high-order energy estimates with a low-frequency Besov estimate and a Kawashima-type compensating functional.
The main difficulty is that Korteweg tensor not only changes the elliptic structure of the stationary problem, but also modifies the dispersive mechanism of the acoustic modes. 
In Korteweg-symmetric variables, the associated spectral projections are uniformly bounded zero-order Fourier multipliers, while the acoustic-capillary phase is wave-like at low frequencies and Schr\"odinger-like at high frequencies. Since the source terms generated by the stationary coefficients are generally not integrable in time, we decompose the Duhamel source according to its time-integrability and frequency behavior. Dyadic dispersive estimates, high-frequency damping estimates, and maximal regularity for the heat equation yield the global-in-time convergence rate $\eps^{\min\{1/r,\,1/2-1/p\}}$ in the mixed Besov norms \(L^r(0,\infty;\dot B^s_{p,1})\). As a consequence, Besov embeddings also yield quantitative convergence in the mixed Lebesgue norms \(L^r(0,\infty;L^p)\).
\end{abstract}

\keywords{Navier--Stokes--Korteweg equations; Low Mach number limit; Stationary force; Stationary solutions; Dispersive estimates.}

\makeatletter
\@namedef{subjclassname@2020}{\textup{2020} Mathematics Subject Classification}
\makeatother

\subjclass[2020]{35Q35, 76N10, 35B40, 35B35}

\maketitle

%\tableofcontents

%-----------------section one-------------------------------------------------------------
%\renewcommand{\theequation}{\thesection.\arabic{equation}} 
\setcounter{equation}{0}
 \indent \allowdisplaybreaks

\section{Introduction and main results}

\subsection{Introduction}

In this paper, we investigate the following three-dimensional compressible Navier--Stokes--Korteweg (NSK) equations with a stationary external force:
\begin{equation}\label{NNSK-intro}
\begin{cases}
\partial_t \rho_\epsilon +\dive(\rho_\eps u_\eps) = 0, \\
\partial_t (\rho_\eps u_\eps)+\dive (\rho_\eps u_\eps \otimes u_\eps) + \dfrac{1}{\eps^2}\nabla p(\rho_\eps) - \mu \Delta u_\eps - \nu \nabla \dive u_\eps = \dfrac{\kappa}{\eps^2} \rho_\eps \nabla \Delta \rho_\eps + \rho_\eps F,\\
(\rho_\epsilon,u_\epsilon)|_{t=0}=(\rho_{\epsilon,0},u_{\epsilon,0}),\quad \displaystyle\lim_{|x|\to\infty}(\rho_\epsilon,u_\epsilon)=(\rho_{\infty},0),
\end{cases}
\end{equation}
in $(t,x)\in(0,+\infty)\times \mathbb{R}^3$,  
where  the unknowns $\rho_\eps(t,x)$ and $u_\eps(t,x)$ denote the density and velocity of the fluid, respectively, while $\rho_{\epsilon,0}$ and $u_{\epsilon,0}$ are the given initial data. Here, $\rho_\infty>0$ is the constant far-field density.
The barotropic pressure law obeys $p\in C^\infty$ and $p'(\rho_\infty)>0$. 
The vector field $F=F(x)$ is a prescribed stationary external force, not necessarily a potential field.
The parameter $0<\eps\leq1$ is the Mach number defined as
the ratio of the reference velocity to the reference sound speed in the fluid.
The viscosity coefficients satisfy $\mu>0$ and $\nu>0$. The capillary coefficient $\kappa$ is a positive constant. 
In the scaling adopted in \eqref{NNSK-intro}, the Korteweg force is of order $\eps^{-2}$, which is different from previous studies on the low Mach number limit of the compressible NSK equations, and we will elaborate on this below.

As an important extension of the compressible Navier--Stokes equations, 
the compressible NSK equations incorporate capillary and diffuse-interface effects into the modeling of compressible viscous fluids via the Korteweg tensor in the momentum equation, 
thus allowing for the description of the dynamics of a compressible fluid with internal capillarity. 
The general Korteweg tensor is written as
\begin{align}
\mathbb{K} = \left( {\rho_\epsilon \dv(\kappa(\rho_\epsilon )\nabla \rho_\epsilon ) + \left( {\kappa(\rho_\epsilon ) - \rho\kappa'(\rho_\epsilon )} \right){{\left| {\nabla \rho_\epsilon } \right|}^2}} /2\right)\mathbb{I}_3 -  {\kappa(\rho_\epsilon )\nabla \rho_\epsilon  \otimes \nabla \rho_\epsilon },\nonumber
\end{align}
where $\mathbb{I}_3$ denotes the third-order identity matrix.
In particular, for a constant capillarity coefficient, the Korteweg tensor reduces, up to the pressure-gradient convention, to the third-order term $\kappa\rho_\epsilon\nabla\Delta\rho_\epsilon$ appearing in \eqref{NNSK-intro}.
Its origin goes back to Korteweg's constitutive law for fluids with density-gradient forces \cite{Ko-1901}.  A modern version was proposed by Dunn
and Serrin \cite{Du-Se-1985}, where they developed a thermodynamically consistent continuum formulation.
In addition, this model also permits describing some phase transition phenomena \cite{An-Mc-Wh-1998}. 
Besides its physical role in the description of liquid-vapor interfaces and phase transitions, the Korteweg tensor  also changes the analytic structure of the compressible modes by adding one derivative to the density energy and a genuinely dispersive high-frequency component.

Due to its physical importance and mathematical challenges, the compressible NSK equations have attracted considerable attention.
For the well-posedness theory, 
the pioneering works of Hattori and Li \cite{Ha-Li-1994,Ha-Li-1996} established global smooth solutions near equilibrium in multi-dimensional space. Danchin and Desjardins  \cite{Da-De-2001} introduced a critical-space approach for compressible Korteweg fluids, while Kotschote \cite{Ko-2008} developed a maximal-regularity theory for strong solutions. In critical and near-critical Besov frameworks, global well-posedness, nonlocal-to-local capillarity limits, and large-data local theory were studied in \cite{Ch-Ko-2019,Ch-Ha-2011,Ch-2014}. Gevrey regularity and decay were obtained in \cite{Ch-Da-Xu-2021}. The long-time behavior generated by the capillary dissipation has been investigated by Tan and Zhang 
\cite{Ta-Zh-2014} and by Kawashima, Shibata and Xu \cite{Ka-Sh-Xu-2021}. In the presence of external forces or nontrivial reference states, significant contributions include, but are not limited to, the global existence and optimal decay obtained by Li \cite{Li-2012}, the potential-force analysis of Wang and Wang \cite{Wa-Wa-2015}, the nonlinear stability of stationary solutions to the full NSK equations by Chen and Zhao \cite{Ch-Zh-2014}, and the construction and stability of time-periodic solutions by Tsuda \cite{Ts-2016}. 
These works show that the Korteweg tensor supplies additional density regularity, but they also reveal that stationary coefficients and low frequencies require a treatment different from that used around a constant equilibrium.

The low Mach number limit (incompressible limit) of compressible fluid dynamics models, such as the compressible Euler equations and Navier--Stokes equations, is another important and challenging mathematical problem.
The first result  can be traced back to 
Klainerman and Majda \cite{Kl-Ma-1981,Kl-Ma-1982}, in which they proved the incompressible limit of the isentropic Euler equations to the incompressible Euler equations for local smooth solutions with well-prepared initial data. 
Using the dispersive mechanisms of acoustic waves,
Ukai \cite{Uk-1986} verified the low Mach number limit for the ill-prepared initial data in the whole space, see also \cite{Is-1987}.
In the viscous setting, Lions and Masmoudi \cite{Li-Ma-1998}, Hoff \cite{Ho-1998}, and Desjardins and Grenier \cite{De-Gr-1999} proved the low Mach number limit for global weak solutions of the isentropic Navier--Stokes equations, respectively.
Danchin \cite{Da-2002} established global convergence for ill-prepared data in scaling-critical spaces. Related progress for full systems, singular thermodynamic limits, $L^p$-critical data, large solutions, and refined convergence estimates can be found in \cite{Al-2006,Fe-No-2009,Da-He-2016,Da-Mu-2017,Fu-2024} and references therein. The common feature of these works is that the acoustic component must be separated from the solenoidal component and controlled by a combination of parabolic smoothing and dispersive estimates.

As the capillarity approximation of the compressible Navier--Stokes equations \cite{BYZ-2014},  
the low Mach number limit of the compressible NSK equations has also attracted attention. 
Nevertheless, the appearance of the Korteweg tensor gives rise to an extra singular term, and progress 
has been made for some special cases.
Li and Yong \cite{Li-Yo-2016} studied the low Mach number limit for the case of well-prepared initial data and $\kappa$ being of  order $O(\epsilon)$
in the three-dimensional whole space and periodic domain. Subsequently, Ju and Xu \cite{JX-2022} improved some results in \cite{Li-Yo-2016} by relaxing the restrictions on the initial data. 
Fujii and Li \cite{Fu-Li-2025} established the low Mach limit for large solutions of the two-dimensional compressible NSK equations with ill-prepared initial data and $\kappa$ being of order $O(\epsilon^2)$  
in critical Fourier--Besov spaces. 
As for the low Mach number limit of the full NSK equations, we refer to \cite{HLY-2024, LY-2026, SL-2019}.
The results mentioned above are all concerned with perturbations of constant states or settings without external forces. In contrast, the present problem involves a combination of the low Mach number limit, a stationary external force, and perturbations around a nontrivial stationary flow in the three-dimensional whole space. 
In particular, 
the capillarity coefficient is independent on Mach number, which makes us face stronger singularity than \cite{Fu-Li-2025, JX-2022, Li-Yo-2016}.

Stationary external force introduces a second and essentially independent difficulty.  In scaling-invariant or weak Besov spaces, stationary incompressible flows and their stability were considered by Kaneko, Kozono and Shimizu \cite{Ka-Ko-Sh-2019}, Cunanan, Okabe and Tsutsui \cite{Cu-Ok-Ts-2022}, and Kozono and Shimizu \cite{Ko-Sh-2023}. For compressible flows, the existence and nonlinear stability of stationary flows were studied by Shibata and Tanaka \cite{Sh-Ta-2003,Sh-Ta-2007}.
Deguchi established the stability and sharp decay around stationary solutions in \cite{De-2024}.
A central point in these works is that a stationary velocity in three dimensions may have a spatial tail of order $|x|^{-1}$, in agreement with the far-field behavior of steady Navier--Stokes flows \cite{Ko-Sv-2011}. Such a profile is naturally accommodated by $\dot B^{1/2}_{2,\infty}$ but not, in general, by the stronger space $\dot B^{1/2}_{2,1}$. This motivates the mixed weak-Besov/high-Sobolev framework used below. Standard facts concerning the Littlewood--Paley analysis of these spaces can be found in \cite{BCD-Book-2011}.

Although the Korteweg tensor gives rise to challenges, it also plays a stabilizing role.
Deguchi \cite{De-2025} recently established the global-in-time low Mach number limit for the compressible Navier--Stokes equations with a small stationary force and ill-prepared initial data in the three-dimensional whole space. In the absence of capillary effect, the stationary density fluctuation is of order \(O(\eps^2)\) only at the weak low-frequency level, whereas its full high-order estimate is of order \(O(\eps)\). Moreover, both the compressible velocity component and the incompressible projection error are generally of order \(O(\eps)\). Our main results show that the Korteweg tensor leads to a different stationary asymptotic regime: the elliptic operator 
$p'(\rho_\infty)-\kappa\rho_\infty\Delta  $
provides additional density regularity and yields second-order convergence simultaneously for the stationary density, and both velocity errors in the low- and high-frequency norms considered here. In the corresponding proofs,  compared with the non-stationary problem of Navier--Stokes equations,
the distinction is structural.  
The capillary term cannot be treated as a lower-order perturbation of the non-capillary acoustic dynamics: after the ill-prepared rescaling, the acoustic-capillary phase is wave-like at low frequencies and Schr\"odinger-like at high frequencies. A Korteweg-symmetric formulation is therefore required to obtain spectral projections that are uniformly bounded with respect to \(\eps\). In addition, the linear terms generated by the stationary solution are not generally integrable in time and cannot be included in a standard \(L^1_t\) Duhamel source. To overcome these difficulties, we combine uniform energy estimates, weak low-frequency Besov control, dyadic acoustic-capillary dispersive estimates, and a decomposition of the Duhamel source into time-integrable and stationary-coefficient components.
Our main results are stated in the following subsection.

\subsection{Main results}

\subsubsection{Low Mach number limit for the stationary case}
Consider the stationary NSK equations with Mach number 
\(0<\epsilon \leq 1\)
\begin{equation}\label{LMSNKS-intro}
\begin{cases}
\text{div}(\rho_\epsilon^* u_\epsilon^*) = 0, \\
\text{div}(\rho_\epsilon^* u_\epsilon^* \otimes u_\epsilon^*) + \frac{\nabla p(\rho_\epsilon^*)}{\epsilon^2} - \mu \Delta u_\epsilon^* - \nu \nabla \text{div} u_\epsilon^* = \frac{\kappa \rho_\epsilon^* \nabla \Delta \rho_\epsilon^*}{\epsilon^2} + \rho_\epsilon^* F(x),\\
\lim_{|x|\rightarrow\infty}(\rho_\epsilon^*,u_\epsilon^*)=(\rho_\infty,0),
\end{cases}
\end{equation}
and the stationary incompressible Navier--Stokes equations
\begin{equation}\label{SNS-intro}
\begin{cases}
\rho_\infty \text{div}(u^* \otimes u^*) = \mu \Delta u^* - \nabla \Pi^* + \rho_\infty F(x), \\
\text{div} u^* = 0.
\end{cases}
\end{equation}
Define 
\begin{align*}
   \mathbb{Q}=\nabla\Delta^{-1}\text{div},\quad  \mathbb{P}=\text{Id}-\mathbb{Q}.
\end{align*}
Then, the result on the low Mach number limit of the problem \eqref{LMSNKS-intro} is  the following theorem.
\begin{thm}\label{LMSNSKThm-intro}
Assume that $k\ge 3$. There exists \(\delta_0 > 0\) such that if
\begin{align*}
\|F\|_{\dot B_{2,\infty}^{-\frac{3}{2}} \cap \dot H^k} \leq \delta_0,
\end{align*}
then the problem \eqref{LMSNKS-intro} with \(0<\epsilon \leq 1\) and problem \eqref{SNS-intro} admit unique solutions \((\rho_\epsilon^*, u_\epsilon^*)\) and $u^*$, respectively. Moreover, it holds that
\begin{equation}\label{siguepsest-intro}
\|u^*\|_{\dot{B}_{2,\infty}^{\frac{1}{2}}\cap \dot H^{k+2}}+\|u_\epsilon^*\|_{\dot B_{2,\infty}^{\frac{1}{2}} \cap \dot H^{k+2}} + \epsilon^{-2} \|\rho_\epsilon^* - \rho_\infty\|_{\dot B_{2,\infty}^{-\frac{1}{2}} \cap \dot H^{k+3}}  \leq C \|F\|_{\dot B_{2,\infty}^{-\frac{3}{2}} \cap \dot H^k},
\end{equation}
and
\begin{equation}\label{machest-intro}
\|(\mathbb{Q} u_\epsilon^*, \mathbb{P} u_\epsilon^* - u^*)\|_{\dot B_{2,\infty}^{\frac{1}{2}} \cap \dot H^{k+2}}\le C\epsilon^2 \delta_0^2,
\end{equation}
where \(C\) is a constant independent of \(\epsilon\).
\end{thm}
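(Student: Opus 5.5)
We rescale the density fluctuation as $\rho_\epsilon^*=\rho_\infty+\epsilon^2 b_\epsilon$. The singular terms then become regular:
\[
\frac{\nabla p(\rho_\epsilon^*)-\kappa\rho_\epsilon^*\nabla\Delta\rho_\epsilon^*}{\epsilon^2}=\nabla\big(\mathcal L b_\epsilon\big)+\epsilon^2\big(\nabla G(b_\epsilon)-\kappa b_\epsilon\nabla\Delta b_\epsilon\big),\qquad \mathcal L:=p'(\rho_\infty)-\kappa\rho_\infty\Delta ,
\]
where $G(b)=\epsilon^{-4}\big(p(\rho_\infty+\epsilon^2 b)-p(\rho_\infty)-p'(\rho_\infty)\epsilon^2 b\big)$ is quadratic in $b$ uniformly in $\epsilon$.

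The continuity equation becomes $\rho_\infty\dive u_\epsilon^*=-\epsilon^2\dive(b_\epsilon u_\epsilon^*)$. Thus \eqref{LMSNKS-intro} takes the form of a linear Stokes-type system with data $(f,g)$:
\[
\rho_\infty\dive u=g,\qquad -\mu\Delta u-\nu\nabla\dive u+\nabla\mathcal L b=f .
\]
Here $g=-\epsilon^2\dive(bu)$ and
\[
f=\rho_\infty F+\epsilon^2 bF-\dive\big((\rho_\infty+\epsilon^2 b)u\otimes u\big)-\epsilon^2\nabla G(b)+\epsilon^2\kappa b\nabla\Delta b .
\]
This linear system is solved explicitly by
\[
\mathbb P u=(-\mu\Delta)^{-1}\mathbb P f,\qquad \mathbb Q u=\rho_\infty^{-1}\nabla\Delta^{-1}g,\qquad b=\mathcal L^{-1}\Big(\Delta^{-1}\dive f+\tfrac{\mu+\nu}{\rho_\infty}g\Big).
\]
The key linear estimate uses only that $\mathcal L^{-1}$ is a zero-order multiplier at low frequencies and gains two derivatives at high frequencies, uniformly in $\epsilon$:
\[
\|u\|_{\dot B^{1/2}_{2,\infty}\cap\dot H^{k+2}}+\|b\|_{\dot B^{-1/2}_{2,\infty}\cap\dot H^{k+3}}\le C\big(\|f\|_{\dot B^{-3/2}_{2,\infty}\cap\dot H^{k}}+\|g\|_{\dot B^{-1/2}_{2,\infty}\cap\dot H^{k+1}}\big).
\]
Indeed $\Delta^{-1}\dive f$ lies in $\dot B^{-1/2}_{2,\infty}\cap\dot H^{k+1}$, and $\mathcal L^{-1}$ lifts the high part to $\dot H^{k+3}$.

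Next we set up a contraction in the ball $\{\|u\|_X+\|b\|_Y\le 2C\delta_0\}$, where $X=\dot B^{1/2}_{2,\infty}\cap\dot H^{k+2}$ and $Y=\dot B^{-1/2}_{2,\infty}\cap\dot H^{k+3}$. This requires bilinear estimates for the nonlinear terms, for example
\[
\|\dive(u\otimes v)\|_{\dot B^{-3/2}_{2,\infty}\cap\dot H^k}\lesssim\|u\|_X\|v\|_X .
\]
The low-frequency part of this bound follows from the interpolation $X\subset\dot B^{3/2}_{2,1}\subset L^\infty$ and the paraproduct bound $\|uv\|_{\dot B^{1/2}_{2,\infty}}\lesssim\|u\|_{\dot B^{1/2}_{2,\infty}}\|v\|_{L^\infty\cap\dot B^{3/2}_{2,1}}$. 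The high part holds since $\dot H^{k+2}$ is an algebra modulo $L^\infty$ for $k\ge3$. The remaining terms are handled the same way:
\begin{itemize}
\item $\|bu\|_{\dot B^{1/2}_{2,\infty}\cap \dot H^{k+2}}\lesssim \|b\|_Y\|u\|_X$, where one uses $b\in\dot B^{3/2}_{2,1}$ from $Y$;
\item $bF$, which uses $F\in\dot B^{-3/2}_{2,\infty}\cap\dot H^k$ and $b\in L^\infty\cap \dot H^{k+3}$;
\item $b\nabla\Delta b$, which needs $b\in\dot H^{k+3}$ (exactly the derivative gained by capillarity);
\item $G(b)$, which is a composition estimate.
\end{itemize}
All these bounds have constants independent of $\epsilon\le1$, so for $\delta_0$ small the map is a contraction. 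This gives existence, uniqueness in the ball, and \eqref{siguepsest-intro} with $\epsilon^{-2}\|\rho^*_\epsilon-\rho_\infty\|=\|b_\epsilon\|_Y$. The incompressible solution $u^*$ is obtained by the same scheme, which is simply the $\mathbb P$-part.

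For \eqref{machest-intro}, the formula $\mathbb Q u_\epsilon^*=-\rho_\infty^{-1}\epsilon^2\nabla\Delta^{-1}\dive(b_\epsilon u_\epsilon^*)$ and the product bound immediately give $\|\mathbb Qu^*_\epsilon\|_X\le C\epsilon^2\delta_0^2$. Set $w=\mathbb Pu_\epsilon^*-u^*$. Since $\mathbb P$ kills the gradient terms $\nabla\mathcal L b$ and $\nabla G$, we obtain
\[
-\mu\Delta w=\mathbb P\Big[\epsilon^2 b F-\rho_\infty\dive\big(u_\epsilon^*\otimes u_\epsilon^*-u^*\otimes u^*\big)-\epsilon^2\dive(b\,u_\epsilon^*\otimes u_\epsilon^*)+\epsilon^2\kappa b\nabla\Delta b\Big].
\]
We then write $u_\epsilon^*=u^*+w+\mathbb Qu_\epsilon^*$. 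The difference term is bounded by $C\delta_0(\|w\|_X+\epsilon^2\delta_0^2)$, and every other term is $O(\epsilon^2\delta_0^2)$ by the bilinear bounds. Absorbing $C\delta_0\|w\|_X$ into the left side yields $\|w\|_X\le C\epsilon^2\delta_0^2$.

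The main obstacle I expect is the product and paraproduct estimates at the endpoint weak space $\dot B^{1/2}_{2,\infty}$ (resp. $\dot B^{-3/2}_{2,\infty}$ for $F$ and $bF$). Since $\dot B^{1/2}_{2,\infty}\not\subset L^\infty$, one must carefully route the $L^\infty$ factor through the high-regularity part via the interpolation $\dot B^{1/2}_{2,\infty}\cap\dot H^{k+2}\subset\dot B^{3/2}_{2,1}$. The low-frequency part of $b\nabla\Delta b$ also needs care, since three derivatives fall on the density there.
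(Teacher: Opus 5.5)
Your proposal is correct and follows the paper's proof essentially step for step: the same $\epsilon^2$-rescaling of the density, the same elliptic inversion (your $\mathcal L$ is the paper's $\mathbf B$, and your formulas for $\mathbb Pu$, $\mathbb Qu$, $b$ coincide with \eqref{siguepseq}), a contraction in $(\dot B^{-1/2}_{2,\infty}\cap\dot H^{k+3})\times(\dot B^{1/2}_{2,\infty}\cap\dot H^{k+2})$, and the same absorption argument for $\mathbb Pu_\epsilon^*-u^*$. The paper differs only in obtaining $u^*$ from Deguchi's theorem plus a high-order bootstrap, rather than by your self-contained contraction. There is one index slip to fix: the low-frequency part of $\|\operatorname{div}(u\otimes v)\|_{\dot B^{-3/2}_{2,\infty}}$ needs $\|u\otimes v\|_{\dot B^{-1/2}_{2,\infty}}\lesssim\|u\|_{\dot B^{1/2}_{2,\infty}}\|v\|_{\dot B^{1/2}_{2,\infty}}$ (the product law with $s_1=s_2=\tfrac12$). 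The $\dot B^{1/2}_{2,\infty}$ bound you quote lands in the wrong space at low frequencies; it is instead the estimate that controls $bu$ in $g$. Also, the low-frequency part of $b\nabla\Delta b$ that you flag is simply bounded by $\|b\|_{\dot B^{-1/2}_{2,\infty}}\|\nabla\Delta b\|_{\dot B^{1/2}_{2,1}}$.
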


\begin{rem}
Compared with the result in
\cite{De-2025} on the stationary compressible Navier--Stokes equations, the Korteweg tensor changes the elliptic structure of the density equation through the operator
\begin{align*}
p'(\rho_\infty)-\kappa\rho_\infty\Delta.    
\end{align*}
The inverse of this operator gains two derivatives and, together with the
divergence structure of the stationary equations, leads to 
\begin{align*}
 \eps^{-2}(\rho_\eps^*-\rho_\infty)
\in
\dot B^{-1/2}_{2,\infty}\cap\dot H^{k+3}.   
\end{align*}
Moreover, the compressible velocity and the incompressible
projection error satisfy
\begin{align*}
\mathbb Q u_\eps^*=O(\eps^2),
\qquad
\mathbb P u_\eps^*-u^*=O(\eps^2)   
\end{align*}
in the weak Besov and high-order Sobolev norms appearing in
\eqref{machest-intro}, while for the non-capillary stationary result of \cite{De-2025}, the corresponding velocity errors are only of order \(O(\eps)\).
\end{rem}

\begin{rem}
The proof of Theorem~\ref{LMSNSKThm-intro} is divided into two steps.
Theorem~\ref{SNSthm} first provides the unique stationary
incompressible solution \(u^*\), which serves as the limiting profile.
Section~\ref{Sec:SNSK} then constructs the stationary NSK solution
\((\rho_\eps^*,u_\eps^*)\) uniformly in \(\eps\) and proves the
estimates \eqref{siguepsest-intro}--\eqref{machest-intro}.
\end{rem}

\subsubsection{Low Mach number limit for the non-stationary case}
Let
\begin{align}\label{def-Psi-Phi-intro}
    \Psi(\rho):=\rho^{-1},\quad \Phi(\rho):=\frac{p'(\rho)}{\rho}, \quad \gamma_0:=\Phi(\rho_\infty)>0.
\end{align}
Formally, letting $\epsilon\rightarrow 0$ in \eqref{NNSK-intro}, the limiting system is the following non-stationary incompressible Navier--Stokes equations
\begin{equation}\label{NNS-intro}
\begin{cases}
\rho_\infty (\partial_t u+\text{div}(u \otimes u)) = \mu \Delta u - \nabla \Pi + \rho_\infty F(x), \\
\text{div} u = 0,\\
u|_{t=0}=\mathbb{P} u_0
\end{cases}
\end{equation}
Let $(\rho_\eps^*, u_\eps^*)$ be the unique solution of the stationary NSK equations \eqref{LMSNKS-intro} obtained in Theorem \ref{LMSNSKThm-intro}, satisfying $\rho_\eps^*-\rho_\infty=O(\eps^2)$. 
Define the perturbation
    \begin{equation}\label{Sigweps-intro}
        \sigma_\eps:=\eps^{-1}(\rho_\eps-\rho_\eps^*), \quad w_\eps:=u_\eps-u_\eps^*.
    \end{equation}
\begin{rem}
    For the non-stationary case with ill-prepared initial data, the compressible Helmholtz projection component $\mathbb{Q}(u_{\eps,0}-u_\eps^*)$ is not necessarily of order $O(\eps)$.
\end{rem}
The Korteweg operator
\begin{align*}
    \mathcal K:=\gamma_0-\kappa\Delta
\end{align*}
satisfies
\begin{align*}
\widehat{\mathcal K z}(\xi)=(\gamma_0+\kappa|\xi|^2)\hat{z}(\xi),
\end{align*}
thus we have
\begin{equation}\label{Khalfest-intro}
    \|\mathcal K^{\frac{1}{2}}z\|_{\dot H^m}^2=\gamma_0\|z\|_{\dot H^m}^2+\kappa\|\nabla z\|_{\dot H^m}^2,
\end{equation}
and
\begin{align*}    
\|\mathcal K^{\frac{1}{2}}z\|_{\dot B^s_{p,r}}\sim \|z\|_{\dot B^s_{p,r}}+\|z\|_{\dot B^{s+1}_{p,r}}.
\end{align*}
Define
\begin{align*}
    \mathcal{E}_N(t):=\|(\mathcal K^{\frac{1}{2}}\sigma_\eps,w_\eps)(t)\|_{\dot B^{1/2}_{2,\infty}\cap H^N},\quad
    \mathcal{D}_N(t):=\|(\nabla\mathcal K^{\frac{1}{2}}\sigma_\eps,\nabla w_\eps)(t)\|_{ H^N}.
\end{align*}
The following theorem presents the low Mach number limit of the problem \eqref{NNSK-intro}.
\begin{thm}\label{NNSKThm-intro}
Assume that $N\ge 5$. There exist \(\delta_0 > 0\) and \(\eps_0 > 0\)  
such that if
\begin{align*}
0<\eps\leq \eps_0, \quad \|F\|_{\dot B_{2,\infty}^{-\frac{3}{2}} \cap \dot H^{N+1}} \leq \delta_0,
\end{align*}
and the initial data satisfy
\begin{align}\label{initialcondition-intro}
    \|(\mathcal K^{\frac{1}{2}}\sigma_{\eps,0},w_{\eps,0})\|_{\dot B^{1/2}_{2,\infty}\cap H^N}
    +\|\mathbb P u_{\eps,0}-u^* \|_{ \dot B^{1/2}_{2,\infty}} \leq \delta_0
\end{align}
where $u^*$ is the unique solution of the stationary Navier--Stokes equations \eqref{SNS-intro}, and
\begin{equation*}
    \sigma_{\eps,0}:=\eps^{-1}(\rho_{\eps,0}-\rho_\eps^*),\quad 
    w_{\eps,0}:=u_{\eps,0}-u_\eps^*,
\end{equation*}
then the problems \eqref{NNSK-intro} and \eqref{NNS-intro} admit unique solutions \((\rho_\eps, u_\eps)\) and \(u\), respectively,
and the following estimates hold
\begin{equation}\label{MainEng-intro}
    \sup_{t\geq 0}\mathcal{E}_N(t)^2+\int_0^\infty \mathcal{D}_N(t)^2 dt\leq C\delta_0^2,
\end{equation}
{here and in \eqref{NNSKest-intro}, the constant $C$ is independent of $\epsilon$}, and for $1/2<s<N-2$, 
\begin{align}\label{1.12.1}
    \|(\mathcal K^{\frac{1}{2}}\sigma_\eps,w_\eps)(t)\|_{\dot B^{s}_{2,1}}\leq C_s (1+t)^{-\frac{s-1/2}{2}}\delta_0, \quad t\geq 0.
\end{align}
Moreover,  if $2<p<6$, $2<r<\infty$ and 
$(r+4)/2r<s<3/p$,
then
\begin{equation}
\label{NNSKest-intro}\|\sigma_\eps\|_{L^r(0,\infty;\dot B^s_{p,1})} + \|\mathbb{Q}w_\eps\|_{L^r(0,\infty;\dot B^s_{p,1})}+
\|\mathbb{P}w_\eps-\tilde u\|_{L^r(0,\infty;\dot B^s_{p,1})}
\leq C \eps^{\beta(p,r)}\delta_0,
\end{equation}
where
\begin{align*}
\tilde u:=u-u^*,\quad   \beta(p,r):=\min\Big\{\frac{1}{r},\frac{1}{2}-\frac{1}{p}\Big\}>0.
\end{align*}
\end{thm}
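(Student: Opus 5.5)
We will prove Theorem~\ref{NNSKThm-intro} in three stages: global existence with the uniform bound \eqref{MainEng-intro}, then the decay \eqref{1.12.1}, and finally the convergence rate \eqref{NNSKest-intro}.

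\textbf{Perturbation system and energy bound.} Subtract \eqref{LMSNKS-intro} from \eqref{NNSK-intro} and write the equations for $(\sigma_\eps,w_\eps)$ in the symmetrized form
\begin{align*}
\partial_t\sigma_\eps+u_\eps\cdot\nabla\sigma_\eps+\frac{\rho_\infty}{\eps}\dive w_\eps=f_1,\qquad
\partial_t w_\eps+u_\eps\cdot\nabla w_\eps+\frac{1}{\eps}\nabla\mathcal K\sigma_\eps-\frac{\mu}{\rho_\infty}\Delta w_\eps-\frac{\nu}{\rho_\infty}\nabla\dive w_\eps=f_2 .
\end{align*}
The sources $f_1,f_2$ contain two kinds of terms. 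The first are quadratic terms. The second are terms linear in $(\sigma_\eps,w_\eps)$ with stationary coefficients, such as $w_\eps\cdot\nabla u_\eps^*$ and $\eps^{-1}(\rho_\eps^*-\rho_\infty)\nabla\Delta\sigma_\eps$. By Theorem~\ref{LMSNSKThm-intro}, $\rho_\eps^*-\rho_\infty=O(\eps^2)$ in $\dot H^{k+3}$ with $k=N+1$. Hence every singular $\eps^{-1}$ prefactor multiplying a stationary-coefficient term is compensated by at least one power of $\eps$, and these coefficients are $O(\delta_0)$.

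We then take $\dot H^m$ inner products, $0\le m\le N$, of the $\sigma$-equation against $\mathcal K\sigma_\eps$ and of the $w$-equation against $w_\eps$. The $\eps^{-1}$ terms cancel exactly, since $\int \nabla\mathcal K\sigma\cdot w+\int\mathcal K\sigma\,\dive w=0$. This yields an energy identity for $\|\mathcal K^{1/2}\sigma_\eps\|_{\dot H^m}^2+\|w_\eps\|_{\dot H^m}^2$, where \eqref{Khalfest-intro} supplies the extra derivative on $\sigma_\eps$ needed to absorb the Korteweg commutator $\rho\nabla\Delta\rho$.

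Dissipation for $\nabla\mathcal K^{1/2}\sigma_\eps$ comes from a Kawashima-type functional $\eps\langle w_\eps,\nabla\sigma_\eps\rangle_{\dot H^{m}}$. Its time derivative produces $\|\nabla\mathcal K^{1/2}\sigma_\eps\|^2$ with an $O(1)$ coefficient, because the $\eps^{-1}$ from the equation times the $\eps$ prefactor gives order one. The remaining error terms are controlled by $\eps\|\nabla w_\eps\|^2$, so adding a small multiple of the functional keeps the energy equivalence uniform in $\eps\le\eps_0$.

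The low-frequency $\dot B^{1/2}_{2,\infty}$ part is handled separately. We localize with $\dot\Delta_j$ and estimate the sources in $\dot B^{-3/2}_{2,\infty}$ through product laws, including $\|u^*_\eps\|_{\dot B^{1/2}_{2,\infty}}\|\nabla w_\eps\|_{\dot B^{1/2}_{2,1}}$. A continuity argument combining the two estimates then gives \eqref{MainEng-intro}. Uniqueness and local existence are standard.

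\textbf{Decay.} For \eqref{1.12.1} we interpolate the energy inequality between $\dot B^{1/2}_{2,\infty}$ and $\dot B^{s}_{2,1}$, which gives $\mathcal D\gtrsim \mathcal E^{1+2/(2s-1)}$. Integrating the resulting differential inequality produces the stated rate.

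\textbf{Convergence rate; main obstacle.} Set $v=\mathbb Pw_\eps-\tilde u$. This quantity solves a heat equation whose forcing is a difference of nonlinear terms together with stationary-coefficient terms involving $\mathbb Q w_\eps$ and $\sigma_\eps$. Maximal regularity for the heat equation in $L^r_t\dot B^s_{p,1}$ therefore reduces the estimate of $v$ to bounds on the acoustic part $(\sigma_\eps,\mathbb Qw_\eps)$.

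For the acoustic part we diagonalize the linear operator in the variables $(\mathcal K^{1/2}\sigma,\ |\nabla|^{-1}\dive w)$. The eigenvalues are $\pm i\eps^{-1}|\xi|\sqrt{\rho_\infty(\gamma_0+\kappa|\xi|^2)}+O(|\xi|^2)$, and the spectral projections are zero-order multipliers bounded uniformly in $\eps$. We would prove dyadic $L^2\to L^p$ dispersive bounds of the form $(\eps^{-1}t)^{-(1-2/p)}$, obtained by stationary phase. The phase curvature behaves like $|\xi|^{-1}$ at low frequencies (wave-like) and like a constant at high frequencies (Schrödinger-like), and on each dyadic block this yields a Strichartz estimate with gain $\eps^{1/2-1/p}$. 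High frequencies are instead handled by the $e^{-c|\xi|^2 t}$ damping, which yields the $\eps^{1/r}$ factor when integrated against time in $L^r$.

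The main obstacle is the Duhamel source. Time-integrable terms (quadratic ones, using \eqref{MainEng-intro}) are placed in $L^1_t$ and treated by the dispersive estimate. The stationary-coefficient terms, for instance $u^*_\eps\cdot\nabla w_\eps$, are only in $L^2_t$. For these we would try a dual inhomogeneous Strichartz estimate with $L^{r'}_t$ input, choosing $r>2$ so that $(r+4)/(2r)<s$ matches the required derivative loss. If that fails, we would absorb these terms by a smallness bootstrap in the norm $L^r\dot B^s_{p,1}$ itself, using the $O(\delta_0)$ size of the coefficients. The restrictions $s<3/p$ and $p<6$ ensure the product estimates with $u^*_\eps\in\dot B^{1/2}_{2,\infty}$ close.
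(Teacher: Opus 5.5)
\textbf{There is a genuine gap, at the step you yourself call the main obstacle.}

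Your first two stages follow the paper: the symmetric pairing against $\K\sigma_\eps$ and $w_\eps$, the Kawashima term $\eps\langle w_\eps,\nabla\sigma_\eps\rangle$, dyadic low-frequency control, and the Bernoulli-type decay argument. So does the skeleton of the third stage: the variables $(\K^{1/2}\sigma_\eps,\Lambda^{-1}\dive\mathbb Qw_\eps)$, $\eps$-uniform Mikhlin projections, dyadic dispersion, and heat maximal regularity for $\mathbb Pw_\eps-\tilde u$.

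The gap is that you never settle how the Duhamel term generated by non-integrable, $\eps$-independent sources is bounded, and your fallback cannot work.
\begin{itemize}
\item The source $\mathbb Q(u^*_\eps\cdot\nabla w_\eps+w_\eps\cdot\nabla u^*_\eps)$ contains $\mathbb Pw_\eps=\tilde u+z_\eps$. The piece $\mathbb Q(u^*_\eps\cdot\nabla\tilde u+\tilde u\cdot\nabla u^*_\eps)$ has size $\delta_0^2$ with no smallness in $\eps$. It is therefore not of the form $\delta_0X_\eps+O(\eps^{\beta(p,r)}\delta_0)$, and a bootstrap in $L^r_t\dot B^s_{p,1}$ yields no rate.
\item Even for pieces proportional to the acoustic unknowns, a bootstrap needs a Duhamel bound from $L^r_t\dot B^{s-2}_{p,1}$ into $L^r_t\dot B^s_{p,1}$. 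Such maximal-regularity-type bounds lose negative powers of $\eps$: Lemma~\ref{Lem:HighDampedEstimate} loses $\eps^{-3\delta_p}$ at high frequency. Smallness of $\delta_0$ cannot compensate this as $\eps\to0$.
\end{itemize}
What is actually needed is an inhomogeneous dispersive estimate for non-$L^1_t$ sources that still produces a positive power of $\eps$ and sums over low frequencies in the $\dot B^{1/2}_{2,\infty}$ scale. You only say you ``would try'' this. Also, the role you assign to $s>\frac12+\frac2r$ (matching a derivative loss) is not the role it plays.

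\textbf{How the paper handles this step.} It splits $\Phi_\eps=\Phi_\eps^{\rm d}+\Phi_\eps^{\rm s}$. Here $\Phi_\eps^{\rm s}$ contains only terms whose coefficients carry explicit powers of $\eps$ coming from $\rho_\eps^*-\rho_\infty=O(\eps^2)$: namely $\eps\dive(\sigma_\eps^*w_\eps)$, $(\Psi(\rho_\eps^*)-\bar\Psi)\A w_\eps$ and $\eps^{-1}(\Phi(\rho_\eps^*)-\gamma_0)\nabla\sigma_\eps$.
\begin{itemize}
\item The high-frequency part of $\Phi_\eps^{\rm s}$ goes through Lemma~\ref{Lem:HighDampedEstimate}.
\item The net factor $\eps^{1-3\delta_p}=\eps^{3/p-1/2}$ beats $\eps^{\beta(p,r)}$ because $\frac3p-\frac12>s-\frac12>\frac2r$. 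This, together with $L^r_t$-integrability of the decay rates, is where the condition $\frac12+\frac2r<s<\frac3p$ is used.
\item The remaining term $C\eps^{3/p-1/2}\delta_0X_\eps$ is absorbed.
\item The convective terms $u_\eps^*\cdot\nabla w_\eps$, $w_\eps\cdot\nabla u_\eps^*$ and $\dive(\sigma_\eps u_\eps^*)$ are placed in the time-integrable part via Lemma~\ref{Lem:TimeIntegrability}.
\end{itemize}

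Your doubt about the integrability of those convective terms is well founded. Lemma~\ref{Lem:TimeIntegrability} uses the bound $\|U_\eps\|_{L^\infty}\le C_s\|U_\eps\|_{\dot B^s_{2,1}}$ with $s>5/2$, which fails at low frequencies. Only $\dot B^{3/2}_{2,1}\hookrightarrow L^\infty$ holds, and that gives decay $(1+t)^{-1/2}$, which is not integrable. So you cannot close your gap simply by importing the paper's split; a genuine estimate for that Duhamel term is still required.

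\textbf{Two smaller points.}
\begin{itemize}
\item Dispersive bounds are $L^{p'}\to L^p$, not $L^2\to L^p$. The rate is $(\eps/t)^{2\delta_p}$ at low frequency and $(\eps/t)^{3\delta_p}$ at high frequency.
\item Heat damping alone gives no power of $\eps$. In the paper, $\eps^{1/r}$ comes from interpolating the $TT^*$ endpoint bounds with the $L^\infty_t$ energy bound. The damping is used to go below the endpoint exponent while keeping the endpoint power of $\eps$.
\end{itemize}
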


\begin{rem}
The regularity condition \(N\geq5\) is technical and is not expected to be optimal. In the non-capillary problem, Deguchi \cite{De-2025} closed the argument in
\(\dot B^{1/2}_{2,\infty}\cap\dot H^4\). In the present NSK system, the Korteweg tensor induces three spatial derivatives of the density, while the symmetric variable \(\mathcal K^{1/2}\sigma_\eps\) carries one additional density derivative and the operator
\(\mathcal K^{1/2}\operatorname{div}\) is of order two at high frequencies. Moreover, the convergence argument requires the choice
$s\in\left(\frac52,N-2\right)$ 
to obtain the time-integrable \(L^\infty\) bound and to control the high-frequency acoustic-capillary source terms  (see Lemma \ref{Lem:TimeIntegrability}).  The assumption \(N\geq5\) guarantees that this interval is nonempty and provides the derivative margin needed in the low-high frequency interpolation.
\end{rem}

By the continuous embeddings $\dot B^{3(\frac{1}{q}-\frac{1}{p})}_{q,1}\hookrightarrow\dot B^0_{p,1}\hookrightarrow L^p$ with $1 < q \leq p < \infty$ (see \cite[Proposition 2.39]{BCD-Book-2011}), we obtain the following $L^p$-form convergence result.

\begin{cor}\label{NNSKLpCor-intro}
Under the assumptions of Theorem~\ref{NNSKThm-intro}, let
\(3<p<\infty\) and \(2<r<\infty\) satisfy $\frac2r<1-\frac3p$. 
Then, for any
\begin{align*}
0<\beta<\beta_*(p,r),
\quad \text{with}\quad
\beta_*(p,r)
:=
\min\left\{
\frac1r,\,
\frac13-\frac1p-\frac{2}{3r}
\right\},    
\end{align*}
there exists a constant \(C_\beta>0\), independent of
\(\eps\in(0,\eps_0]\), such that
\begin{align*}
 \eps^{-1}
\|\rho_\eps-\rho_\eps^*\|_{L^r(0,\infty;L^p)}
+\|\mathbb Q(u_\eps-u_\eps^*)\|_{L^r(0,\infty;L^p)}+\left\|
\mathbb P(u_\eps-u_\eps^*)-(u-u^*)
\right\|_{L^r(0,\infty;L^p)}
\leq
C_\beta\eps^\beta\delta_0.
\end{align*}
\end{cor}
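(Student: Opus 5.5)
The plan is to deduce the corollary directly from estimate \eqref{NNSKest-intro} of Theorem~\ref{NNSKThm-intro}. We apply that estimate with an auxiliary integrability index $q\in(2,6)$ in place of $p$ and with regularity $s=3(\frac1q-\frac1p)$. We then pass to $L^p$ through the embedding chain $\dot B^{3(\frac1q-\frac1p)}_{q,1}\hookrightarrow\dot B^0_{p,1}\hookrightarrow L^p$ recalled before the corollary. The only real work is choosing $q$ so that the hypotheses of the theorem hold and the rate $\beta(q,r)$ exceeds the prescribed $\beta$.

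\textbf{Choice of $q$.} The theorem requires three things:
\begin{itemize}
\item $2<q<6$;
\item $\frac{r+4}{2r}=\frac12+\frac2r<s<\frac3q$;
\item $q\le p$, for the embedding.
\end{itemize}
Since $s=\frac3q-\frac3p$ and $p<\infty$, the upper bound $s<\frac3q$ is automatic. The lower bound is equivalent to
\begin{align*}
\frac1q>\frac16+\frac1p+\frac{2}{3r}=:a .
\end{align*}
The hypothesis $\frac2r<1-\frac3p$ is exactly $a<\frac12$. Hence for any $\eta>0$ small we may take $\frac1q=a+\eta\in(a,\frac12)$. Then:
\begin{itemize}
\item $q>2$ because $a+\eta<\frac12$;
\item $q<6$ because $a>\frac16$;
\item $q<p$ because $a>\frac1p$.
\end{itemize}
So Theorem~\ref{NNSKThm-intro} applies with $(q,r,s)$. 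It gives
\begin{align*}
\|\sigma_\eps\|_{L^r(0,\infty;\dot B^s_{q,1})}+\|\mathbb Q w_\eps\|_{L^r(0,\infty;\dot B^s_{q,1})}+\|\mathbb P w_\eps-\tilde u\|_{L^r(0,\infty;\dot B^s_{q,1})}\le C\eps^{\beta(q,r)}\delta_0 .
\end{align*}

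\textbf{The rate.} We have
\begin{align*}
\beta(q,r)=\min\Big\{\frac1r,\ \frac12-a-\eta\Big\}=\min\Big\{\frac1r,\ \frac13-\frac1p-\frac2{3r}-\eta\Big\}.
\end{align*}
Given $\beta<\beta_*(p,r)$, choose $\eta\in\big(0,\beta_*(p,r)-\beta\big]$, shrinking it if necessary so that $a+\eta<\frac12$. Then $\beta(q,r)\ge\beta$. Since $\eps\le\eps_0\le1$, this yields $\eps^{\beta(q,r)}\le\eps^{\beta}$. The constant $C_\beta$ depends on $\beta$ only through this choice of $q$ and $s$, and not on $\eps$.

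\textbf{Conclusion.} Apply the embedding, which holds since $1<q\le p<\infty$, pointwise in time and take the $L^r$ norm in $t$. Then use the definitions $\sigma_\eps=\eps^{-1}(\rho_\eps-\rho_\eps^*)$, $w_\eps=u_\eps-u_\eps^*$ and $\tilde u=u-u^*$. This gives the three terms of the corollary.

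The only delicate point is the bookkeeping showing that the interval for $\frac1q$ is nonempty and compatible with all constraints. This is where the condition $\frac2r<1-\frac3p$ enters, and it is also why the endpoint $\beta_*$ itself is not attained.
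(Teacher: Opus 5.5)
Your proposal is correct and follows the paper's argument. The paper obtains the corollary in one line from \eqref{NNSKest-intro}, applied with an auxiliary index $q\le p$ and $s=3(\frac1q-\frac1p)$, followed by the embedding $\dot B^{3(\frac1q-\frac1p)}_{q,1}\hookrightarrow\dot B^0_{p,1}\hookrightarrow L^p$. Your choice $\frac1q=\frac16+\frac1p+\frac{2}{3r}+\eta$ supplies the index bookkeeping the paper leaves implicit, including why $\frac2r<1-\frac3p$ is needed and why the endpoint $\beta_*(p,r)$ is not attained.
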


\subsection{Proof strategies and main difficulties}

We explain the key ideas of the proof and point out the differences from the arguments in \cite{De-2025} on  the compressible Navier--Stokes equations  that arise due to the appearance of the Korteweg tensor.

\medskip
\noindent\textit{1. Uniform construction of the stationary solutions.}
We write
\begin{align*}
 \rho_\eps^*=\rho_\infty+\eps^2\sigma_\eps^*   
\end{align*}
and use the density equation together with the Helmholtz decomposition of the stationary velocity. The density equation is governed by the elliptic operator (see \eqref{NJKG3.2}):
\begin{align*}
\mathbf B:=p'(\rho_\infty)-\kappa\rho_\infty\Delta.    
\end{align*}
Its inverse is a uniformly bounded Fourier multiplier on the Besov spaces and gains two derivatives in the homogeneous Sobolev scale. After rewriting the stationary system as a fixed-point problem, the Besov product estimates and the high-order Sobolev estimates yield a contraction uniformly in $\eps$. Then the factor $\eps^2$ in the stationary density equation gives $\mathbb Q u_\eps^*=O(\eps^2)$, while the comparison between the projected momentum equations for $\mathbb P u_\eps^*$ and $u^*$ gives the second estimate in \eqref{machest-intro}.

\medskip
\noindent\textit{2. Uniform nonlinear energy estimates around a non-constant profile.}
The perturbation system contains singular acoustic--capillary terms
of order \(\eps^{-1}\), variable viscous coefficients, and linear
terms with coefficients depending on
\((\rho_\eps^*,u_\eps^*)\). The singular terms are skew-symmetric after pairing the density equation with $\K\sigma_\eps$ and the velocity equation with $\rho_\infty w_\eps$. A Kawashima-type cross term recovers the dissipation of $\nabla\K^{1/2}\sigma_\eps$. At high regularity, the remaining stationary and nonlinear terms are absorbed by the smallness of the external force and of the perturbation. At low frequency, a dyadic Besov estimate propagates the $\dot B^{1/2}_{2,\infty}$ norm. Combining both levels gives the uniform bound \eqref{MainEng-intro}. Interpolation between the low-frequency Besov control and the high-order dissipation yields the decay estimate in $\dot B^s_{2,1}$.

The weak Besov component is essential here. 
As pointed out in the theoretical analysis of the stationary Navier--Stokes equations in \cite{De-2024,De-2025}, the stationary velocity may possess a slow spatial tail and is not naturally expected to belong to the stronger critical space $\dot B^{1/2}_{2,1}$. The mixed framework $\dot B^{1/2}_{2,\infty}\cap H^N$ contains the stationary profile and at the same time supplies enough derivatives to control the nonlinear terms arising from the Korteweg tensor.

\medskip
\noindent\textit{3. Acoustic-capillary spectral analysis.}
Let $d_\eps:=\Lambda^{-1}\dive\mathbb Qw_\eps$. The compressible subsystem becomes symmetric in
\[
V_\eps:=\bigl(\K^{1/2}\sigma_\eps,\sqrt{\rho_\infty}\,d_\eps\bigr)^{\mathsf T}.
\]
This change of variables is decisive: the spectral projectors of the linearized acoustic-capillary operator become uniformly bounded zero-order Fourier multipliers.  {The two conjugate oscillatory eigenvalues have imaginary parts $\pm\Omega_\eps(|\xi|)$, where the frequency satisfies the asymptotic scaling}
\[
\Omega_\eps(|\xi|)
\sim
\begin{cases}
|\xi|/\eps,& |\xi|\ll1,\\
|\xi|^2/\eps,& |\xi|\gg1.
\end{cases}
\]
Consequently, the low-frequency evolution is wave-like, whereas the high-frequency evolution is Schr\"odinger-like. We derive dyadic kernel bounds in the two frequency regimes and combine them with the parabolic damping. The low-frequency estimate produces the gain $\eps^{1/r}$, while the high-frequency stationary-phase estimate produces $\eps^{1/2-1/p}$.

\medskip
\noindent\textit{4. Structural decomposition of the Duhamel source.}
The perturbation around a nontrivial stationary flow generates source terms with stationary coefficients. Such terms are not necessarily integrable in time and therefore cannot be fully handled by a standard inhomogeneous Strichartz estimate with an $L^1_t$ source. We separate the acoustic source into a time-integrable part and a stationary-coefficient part. The former is estimated by the decay and energy bounds. The low-frequency stationary part carries an explicit factor of $\eps$, while the high-frequency stationary part is treated by a damped $L^r_t$ estimate that exploits the heat factor without requiring time integrability of the coefficient. This decomposition closes the acoustic estimate with the rate in \eqref{NNSKest-intro}.

\medskip
\noindent\textit{5. Convergence of the incompressible component.}
After applying the Helmholtz projection, the pressure and capillary gradient terms disappear. The difference
\[
z_\eps:=\mathbb Pw_\eps-(u-u^*)
\]
satisfies a forced heat equation. Its source is decomposed into the terms containing either the already controlled acoustic component, the stationary error $u_\eps^*-u^*=O(\eps^2)$, or a small coefficient multiplying $z_\eps$. The low-high compatible Besov product estimates together with the maximal regularity of the heat equation then yield the same convergence rate for $z_\eps$ as for the acoustic component.

\subsection{Outline of this paper}
The rest of the paper is organized as follows. Section \ref{Sec:Pre} introduces the notation, the homogeneous Besov spaces, the product and composition estimates, the elliptic Fourier-multiplier estimate used for the Korteweg operator, and the result on the stationary incompressible Navier--Stokes equations required later. Section \ref{Sec:SNSK} constructs the solutions of the stationary NSK equations uniformly in the Mach number and proves their second-order convergence toward the stationary incompressible flow. Section \ref{Sec:NSNSK} treats the non-stationary problem. It derives the perturbation system, establishes uniform a priori estimates, proves global existence and decay, analyzes the acoustic-capillary semigroup, and completes the convergence estimates between the compressible and incompressible components.

\section{Preliminary}\label{Sec:Pre}

\subsection{Notations}
Throughout this paper, $c$ and \(C\) represent generic positive constants whose value may change from one line to line, while the constants $c_\alpha$ and $C_\alpha$ indicate dependence on the parameter $\alpha$.
The notation \(A\lesssim_{\alpha} B\) indicates that there is a constant \(C\) which depends on \(\alpha\) such that \(A \leq C B\). We write $A\sim_{\alpha} B$ if $A\lesssim_{\alpha} B$ and $B\lesssim_{\alpha} A$. Unless otherwise stated, all implicit constants are independent of the time variable \(t\) and the Mach number \(\eps\).
Let \(X\) be a Banach space. For \(f,g\in X\), we define
$\|(f,g)\|_{X}:=\|f\|_{X}+\|g\|_{X}$.
Given \(T>0\) and \(1\leq p\leq\infty\), \(L^{p}(0,T;X)\) denotes the usual Bochner space of \(X\)-valued measurable functions on \((0,T)\). For convenience, its norm is abbreviated by $\|f\|_{L_T^{p}X}
:=
\|f\|_{L^{p}(0,T;X)}$.
The Fourier transform and inverse Fourier transform of \(f\) are denoted by $\widehat{f}:=\mathcal{F}f$ and $\check{f}:=\mathcal{F}^{-1}f$, respectively.
 
\subsection{Littlewood--Paley decomposition}
We briefly introduce the homogeneous Littlewood--Paley decomposition used throughout the paper, see \cite[Chapters 2--3]{BCD-Book-2011} for a systematic presentation. Choose a radial function $
\chi\in C_c^\infty(\mathbb R^d)$,
which is non-increasing with respect to \(|\xi|\), satisfies
\begin{align*}
0\leq \chi\leq 1,\qquad
\chi(\xi)=1\quad\text{for}\quad|\xi|\leq \frac34,    
\end{align*}
and vanishes whenever \(|\xi|\geq \frac43\). We then set
$\varphi(\xi):=\chi\Big(\frac{\xi}{2}\Big)-\chi(\xi)$.
By construction,
\begin{align*}
 \operatorname{supp}\varphi
\subset
\bigg\{\xi\in\mathbb R^d:
\frac34\leq |\xi|\leq \frac83\bigg\},   
\end{align*}
and the family \(\{\varphi(2^{-k}\cdot)\}_{k\in\mathbb Z}\) forms a homogeneous partition of unity away from the origin:
\begin{align*}
\sum_{k\in\mathbb Z}\varphi(2^{-k}\xi)=1,
\qquad \xi\neq 0.    
\end{align*}
For each \(k\in\mathbb Z\), the \(k\)-th homogeneous dyadic localization of a tempered distribution \(f\) is defined by
$\dot\Delta_k f
:=
\mathcal F^{-1}
\left(
\varphi(2^{-k}\xi)\widehat f(\xi)
\right)$.
By setting \(h=\mathcal F^{-1}\varphi\), one has $
\dot\Delta_k f
=
2^{kd}h(2^k\cdot)\ast f$.
Let \(\mathcal P\) be the space of polynomials on \(\mathbb R^d\), and denote by $\mathcal S'_h:=\mathcal S'(\mathbb R^d)/\mathcal P$,
the space of tempered distributions modulo polynomials. For every
\(f\in\mathcal S'_h\), the homogeneous decomposition takes the form $f=\sum\limits_{k\in\mathbb Z}\dot\Delta_k f$
in \(\mathcal S'_h\). Moreover, the Fourier supports of distinct dyadic pieces are almost disjoint, and hence $\dot\Delta_k\dot\Delta_\ell f=0$ whenever
$|k-\ell|\geq 2$.
For a distribution \(z\), we decompose it into low- and high-frequency parts:
\begin{equation*}
z^\ell:=\sum_{j\leq0}\dot\Delta_jz,
\qquad z^h:=z-z^\ell.
\end{equation*}

Based on the frequency localization, we now introduce the following homogeneous Besov spaces:
\begin{defn}
For $ s \in \mathbb{R} $ and $ 1 \leq p, r \leq \infty $, the homogeneous Besov spaces $ \dot{B}_{p,r}^s$ are defined as 
\begin{align*}
\dot B_{p,r}^s:=\big\{ f\in \mathcal{S}^\prime_h \,\big{|}\, \|f\|_{\dot{B}_{p,r}^s}:=\big\|\{2^{ks}\|\dot\Delta_k f\|_{L^p}\}_{k\in\mathbb Z}\big\|_{l^r}<\infty\big\}. 
\end{align*}
\end{defn}

\subsection{Analytic tools}
Next, we state several fundamental properties of Besov spaces, along with product estimates.
\begin{prop}\label{prop2.1}{\rm(\!\!\!\cite{BCD-Book-2011})}
Let $0<r<R$, $1\leq p\leq q\leq \infty$ and $m\in \mathbb{N}$. Define the ball $\mathcal{B}=\{\xi\in\mathbb{R}^{3}~| ~|\xi|\leq  R\}$ and the annulus $\mathcal{C}=\{\xi\in\mathbb{R}^{3}~|~  r\leq |\xi|\leq R\}$ . For any $ g\in L^p$ and $\lambda_1>0$, it holds that
\begin{equation*} \left\{
\begin{aligned}
{\rm{supp}}\, \mathcal{F}(g) \subset& \lambda_1 \mathcal{B} \Rightarrow \|D^{m}g\|_{L^q}\lesssim\lambda_1^{m+d(\frac{1}{p}-\frac{1}q{})}\|g\|_{L^p}, \nonumber\\
{\rm{supp}}\, \mathcal{F}(g) \subset& \lambda_1 \mathcal{C} \Rightarrow \lambda_1^{m}\|g\|_{L^{p}}\lesssim\|D^{m}g\|_{L^{p}}\lesssim \lambda_1^{m}\|g\|_{L^{p}}.
\end{aligned}\right.
\end{equation*}
 \end{prop}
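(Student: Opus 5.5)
The statement just above is Proposition~\ref{prop2.1}, the classical Bernstein inequalities. I will prove it directly from the Fourier support hypotheses, by rescaling to $\lambda_1=1$ and using convolution with a fixed Schwartz kernel. The dimension $d$ appearing in the exponent is $d=3$ here, since $\mathcal B,\mathcal C\subset\mathbb R^3$. By dilation, $g_{\lambda_1}(x):=g(x/\lambda_1)$ has Fourier support in $\mathcal B$ (resp.\ $\mathcal C$). Moreover,
\[
\|D^m g\|_{L^q}=\lambda_1^{m+d/q}\cdot\lambda_1^{-m}\ldots,
\]
and more precisely, $\|D^m g_{\lambda_1}\|_{L^q}=\lambda_1^{-m+d/q}\|D^m g\|_{L^q}$ and $\|g_{\lambda_1}\|_{L^p}=\lambda_1^{d/p}\|g\|_{L^p}$. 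Hence it suffices to treat $\lambda_1=1$, and the powers $\lambda_1^{m+d(1/p-1/q)}$ and $\lambda_1^m$ come out automatically.

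\emph{Ball case.} I will fix $\theta\in C_c^\infty(\mathbb R^d)$ with $\theta\equiv1$ on $\mathcal B$. Then $\hat g=\theta\hat g$, so
\[
\partial^\alpha g=\partial^\alpha\check\theta * g
\]
for every $|\alpha|=m$. Since $\check\theta$ is Schwartz, $\partial^\alpha\check\theta\in L^{s}$ for every $s\in[1,\infty]$. Young's inequality with $1+1/q=1/s+1/p$ (admissible because $p\le q$) gives $\|\partial^\alpha g\|_{L^q}\le\|\partial^\alpha\check\theta\|_{L^s}\|g\|_{L^p}$. Summing over the finitely many $\alpha$ yields the first inequality.

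\emph{Annulus case.} The upper bound follows from the ball case with $q=p$, since $\mathcal C\subset\mathcal B$. The lower bound is the only step needing a small construction. I will take $\phi\in C_c^\infty(\mathbb R^d\setminus\{0\})$ with $\phi\equiv1$ on $\mathcal C$. On $\operatorname{supp}\phi$ we have $|\xi|^{2m}\ne0$, so we may write
\[
1=\sum_{|\alpha|=m}\frac{(-i\xi)^\alpha(i\xi)^\alpha}{|\xi|^{2m}}\,c_\alpha
\]
from the multinomial identity $|\xi|^{2m}=\sum_{|\alpha|=m}\frac{m!}{\alpha!}\xi^{2\alpha}$. Define $\hat\psi_\alpha(\xi):=c_\alpha(-i\xi)^\alpha|\xi|^{-2m}\phi(\xi)$, which lies in $C_c^\infty$. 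Then
\[
\hat g=\sum_{|\alpha|=m}\hat\psi_\alpha\,\widehat{\partial^\alpha g},
\qquad\text{so}\qquad
g=\sum_{|\alpha|=m}\psi_\alpha*\partial^\alpha g .
\]
Young's inequality with $\psi_\alpha\in L^1$ gives $\|g\|_{L^p}\lesssim\|D^m g\|_{L^p}$, and rescaling restores the factor $\lambda_1^m$.

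All constants depend only on $r,R,m,d$ (and $p,q$), not on $\lambda_1$ or $g$. The only mildly delicate point is the lower bound, where one must check that the multiplier $\xi^\alpha|\xi|^{-2m}$ is smooth. This holds precisely because the annulus excludes the origin, and it is the reason the reverse inequality fails on a ball.
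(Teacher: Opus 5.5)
Your proposal is correct and follows exactly the standard proof of Bernstein's lemma in Bahouri--Chemin--Danchin (Lemma 2.1), which the paper cites without proof: dilation to $\lambda_1=1$, the representation $\partial^\alpha g=\partial^\alpha\check\theta*g$ with Young's inequality for the ball case, and the identity $|\xi|^{2m}=\sum_{|\alpha|=m}\frac{m!}{\alpha!}\xi^{2\alpha}$ combined with a cutoff supported away from the origin for the reverse inequality on the annulus. The only blemish is the garbled first scaling display $\|D^m g\|_{L^q}=\lambda_1^{m+d/q}\cdot\lambda_1^{-m}\ldots$, which you should simply delete, since the identities you give immediately afterwards under ``more precisely'' are the correct ones.
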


By virtue of Proposition \ref{prop2.1}, the following properties of Besov spaces can be   established.

\begin{prop}{\rm(\!\!\cite[Chapter 2]{BCD-Book-2011}, \cite[Section 2]{De-2025} and \cite{Ka-Ko-Sh-2019})}\label{BesovP}
Let \( s \in \mathbb{R} \) and \( 1 \leq p,r \leq \infty \).
\begin{enumerate}
    \item[(i)]  For any \( k \geq 0 \),
\begin{align*}
  \|\nabla^k g\|_{\dot B_{p,r}^{s}} \sim \|g\|_{\dot B_{p,r}^{s+k}}.    
\end{align*}
    \item[(ii)]  Let \( p' \) be the conjugate exponent of \( p \) and let \( r' \) be the conjugate exponent of \( r \). Then, we have the following duality estimates:
\begin{align*}
 \langle g,h \rangle \lesssim \|g\|_{\dot B_{p,r}^{s}} \|h\|_{\dot  B_{p',r'}^{-s}} \quad \text{and} \quad \|g\|_{\dot B_{p,r}^{s}} \lesssim \sup_{\psi} \langle g,\psi \rangle,    
\end{align*}   
    where the supremum is taken over the Schwartz functions \( \psi \) with \( \|\psi\|_{\dot B_{p',r'}^{-s}} \leq 1 \) and \( 0 \notin \text{supp} \mathcal{F}\psi \).

    \item[(iii)] Let \( s_1 < s_2 \) satisfy \( s = (1-\theta)s_1 + \theta s_2 \) for some \( 0 < \theta < 1 \). Then, the interpolation inequality
\begin{align*}
    \|g\|_{\dot B_{p,r}^{s}} \lesssim_{\theta,s_1,s_2} \|g\|_{\dot B_{p,\infty}^{s_1}}^{1-\theta} \|g\|_{\dot B_{p,\infty}^{s_2}}^{\theta} 
\end{align*}
    holds.
    
    \item[(iv)] Let \( s_1 \geq s_2 \), \( 1 \leq p_1 \leq p_2 \leq \infty \), and \( 1 \leq q \leq \infty \). If \( s_1 - 3/p_1 = s_2 - 3/p_2 \), then we have the continuous embedding
$    \dot{B}_{p_1,q}^{s_1} \hookrightarrow \dot{B}_{p_2,q}^{s_2}$.
    \item[(v)] Let \( \Phi \in C^\infty(\mathbb{R}^3) \) and \( g,h \in \dot B_{2,r}^{s} \cap \dot B_{2,1}^{3/2} \) with \( -3/2 \leq s < 3/2 \) or \( s = 3/2 \), \( r = 1 \). Then, we have
\begin{align*}
  \|\Phi(g) - \Phi(h)\|_{\dot B_{2,r}^{s}} \lesssim_\Phi \big (1 + \|(g,h)\|_{\dot B_{2,1}^{\frac{3}{2}}}\big )\|g - h\|_{\dot B_{2,r}^{s}}.    
\end{align*}
\item[(vi)] Let \( s_1,s_2 \in \mathbb{R} \) satisfy \( s_1,s_2 < 3/2 \) and \( s_1 + s_2 > 0 \). Let \( 1 \leq r_1,r_2 \leq \infty \) satisfy \( 1/r_1 + 1/r_2 = 1/r \). Then, we have
\begin{align*}
 \|gh\|_{\dot B_{2,r}^{s_1+s_2-\frac{3}{2}}} \lesssim_{s_1,s_2} \|g\|_{\dot B_{2,r_1}^{s_1}} \|h\|_{\dot B_{2,r_2}^{s_2}}.    
\end{align*}
    In the cases \( s_1 \leq 3/2 \), \( s_2 < 3/2 \) with \( s_1 + s_2 \geq 0 \), we have
\begin{align*}
\|gh\|_{\dot B_{2,\infty}^{s_1+s_2-\frac{3}{2}}} \lesssim \|g\|_{\dot B_{2,1}^{s_1}} \|h\|_{\dot B_{2,\infty}^{s_2}}.    
\end{align*}
    \item[(vii)] Let \(1 \leq p,q \leq \infty\), \(s > 0\), \(\alpha > 0\), and \(\beta > 0\). Moreover, assume that \(1 \leq p_1,p_2,\tilde{p}_1,\tilde{p}_2 \leq \infty\) satisfy \(1/p = 1/p_1 + 1/p_2 = 1/\tilde{p}_1 + 1/\tilde{p}_2\). If \(g \in \dot{B}_{p_1,q}^{s+\alpha} \cap \dot{B}_{\tilde{p}_1,\infty}^{-\beta}\) and \(h \in \dot{B}_{p_2,\infty}^{-\alpha} \cap \dot{B}_{\tilde{p}_2,q}^{s+\beta}\), then we have \(gh \in \dot{B}_{p,q}^s\) with the estimate
\begin{align*}
    \|gh\|_{\dot{B}_{p,q}^s} \leq C\big(\|g\|_{\dot{B}_{p_1,q}^{s+\alpha}} \|h\|_{\dot{B}_{p_2,\infty}^{-\alpha}} + \|g\|_{\dot{B}_{\tilde{p}_1,\infty}^{-\beta}} \|h\|_{\dot{B}_{\tilde{p}_2,q}^{s+\beta}}\big),      
\end{align*}
    where \(C = C(p,p_1,p_2,\tilde{p}_1,\tilde{p}_2,q,s,\alpha,\beta)\).
\end{enumerate}
\end{prop}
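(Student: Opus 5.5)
The plan is to reduce every item to estimates on individual dyadic blocks, using the Bernstein inequalities of Proposition~\ref{prop2.1} and the almost orthogonality $\dot\Delta_k\dot\Delta_\ell=0$ for $|k-\ell|\ge2$. The product items (v)--(vii) will additionally use Bony's paraproduct decomposition.

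\textbf{Items (i)--(iv).} These are direct.
\begin{itemize}
\item For (i), $\dot\Delta_k\nabla^m g=\nabla^m\dot\Delta_k g$ has Fourier support in the annulus $2^k\mathcal C$. The second Bernstein inequality then gives $\|\dot\Delta_k\nabla^m g\|_{L^p}\sim 2^{km}\|\dot\Delta_k g\|_{L^p}$. Multiplying by $2^{ks}$ and taking the $\ell^r$ norm proves the claim.
\item For (ii), write $\langle g,h\rangle=\sum_k\sum_{|\ell-k|\le1}\langle\dot\Delta_k g,\dot\Delta_\ell h\rangle$, which is legitimate since $\sum_\ell\dot\Delta_\ell=\mathrm{Id}$ on $\mathcal S'_h$. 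Then apply H\"older in $x$ and in $\ell^r\times\ell^{r'}$. For the converse, pick test functions $\psi_k$ that are $L^{p'}$-normalized duals of $\dot\Delta_k g$, weight them by a normalized $\ell^{r'}$ sequence, and set $\psi=\sum_k c_k\widetilde{\dot\Delta}_k\psi_k$. Here $\widetilde{\dot\Delta}_k$ is a fattened block, so that $\psi$ has Fourier support away from $0$. After density and truncation to finitely many $k$, this recovers $\|g\|_{\dot B^s_{p,r}}$.
\item For (iii), split $\sum_k$ at an integer $N$. Bound $2^{ks}\|\dot\Delta_k g\|_{L^p}$ by $2^{k(s-s_1)}\|g\|_{\dot B^{s_1}_{p,\infty}}$ for $k\le N$ and by $2^{-k(s_2-s)}\|g\|_{\dot B^{s_2}_{p,\infty}}$ for $k>N$. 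Both geometric series converge in any $\ell^r$, including $r=1$. Choosing $2^{N(s_2-s_1)}\approx\|g\|_{\dot B^{s_2}_{p,\infty}}/\|g\|_{\dot B^{s_1}_{p,\infty}}$ balances the two sums and gives the interpolation inequality.
\item For (iv), use the first Bernstein inequality: $\|\dot\Delta_k g\|_{L^{p_2}}\lesssim2^{3k(1/p_1-1/p_2)}\|\dot\Delta_k g\|_{L^{p_1}}$. The relation $s_1-3/p_1=s_2-3/p_2$ makes the weights match.
\end{itemize}

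\textbf{Products, items (vi)--(vii).} Decompose $gh=T_gh+T_hg+R(g,h)$, where
\[
T_gh=\sum_k\dot S_{k-1}g\,\dot\Delta_kh,\qquad R(g,h)=\sum_{|k-k'|\le1}\dot\Delta_kg\,\dot\Delta_{k'}h.
\]
\begin{itemize}
\item For (vi), first use Bernstein to get $\|\dot S_{k-1}g\|_{L^\infty}\lesssim\sum_{j\le k-2}2^{3j/2}\|\dot\Delta_jg\|_{L^2}$. Since $s_1<3/2$, this is $\lesssim 2^{k(3/2-s_1)}c_k\|g\|_{\dot B^{s_1}_{2,r_1}}$, where $(c_k)$ is a normalized $\ell^{r_1}$ sequence (by Young's inequality for series). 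Combining with $\|\dot\Delta_kh\|_{L^2}$ and H\"older in $\ell^{r_1}\times\ell^{r_2}$ bounds $T_gh$ in $\dot B^{s_1+s_2-3/2}_{2,r}$. The term $T_hg$ is symmetric and uses $s_2<3/2$.
\item The remainder has blocks supported in balls of radius $\sim 2^k$. Estimate them in $L^1$ by H\"older and then embed $L^1\to L^2$ at frequency $2^j$ at the cost of $2^{3j/2}$. The resulting sum $\sum_{k\ge j-3}2^{(j-k)(s_1+s_2)}$ converges exactly when $s_1+s_2>0$.
\item The endpoint version ($s_1=3/2$ with $r=1$ on $g$, or $s_1+s_2=0$ with output index $\infty$) works the same way. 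In the first case $\|\dot S_{k-1}g\|_{L^\infty}\lesssim\|g\|_{\dot B^{3/2}_{2,1}}$. In the second, the remainder sum is only bounded uniformly in $j$, which is acceptable because the target third index is $\infty$ and the $\dot B^{s_1}_{2,1}$ factor supplies summability.
\item For (vii), use the same decomposition with general Lebesgue exponents. For $T_gh$, estimate $\dot S_{k-1}g$ through $\dot B^{-\beta}_{\tilde p_1,\infty}$ (low frequencies with negative index sum because $\beta>0$) and $\dot\Delta_kh$ through $\dot B^{s+\beta}_{\tilde p_2,q}$. Treat $T_hg$ symmetrically with the pair $(-\alpha,s+\alpha)$. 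For $R$, use $s>0$ to sum the high--high interactions into the first product term.
\end{itemize}

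\textbf{Composition, item (v).} Assume $\Phi(0)=0$ after subtracting a constant, which does not change the difference. Write
\[
\Phi(g)-\Phi(h)=(g-h)\int_0^1\Phi'\bigl(h+\tau(g-h)\bigr)\,d\tau=(g-h)\,\Phi'(0)+(g-h)\,G,
\]
with $G=\int_0^1\bigl[\Phi'(h+\tau(g-h))-\Phi'(0)\bigr]d\tau$.
\begin{itemize}
\item Meyer's first linearization theorem, for smooth functions vanishing at $0$ on the algebra $\dot B^{3/2}_{2,1}\cap L^\infty$, gives $\|G\|_{\dot B^{3/2}_{2,1}}\le C\bigl(\|(g,h)\|_{\dot B^{3/2}_{2,1}}\bigr)$.
\item Then apply the product estimate of (vi) with $s_1=3/2$ (the endpoint version) and $s_2=s$ to bound $(g-h)G$. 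The range $-3/2\le s<3/2$ is exactly what is needed there.
\item For $s=3/2$ with $r=1$, use that $\dot B^{3/2}_{2,1}$ is an algebra.
\end{itemize}
As stated, the constant grows only linearly in $\|(g,h)\|_{\dot B^{3/2}_{2,1}}$. I expect to obtain this only for bounded arguments, i.e.\ under the smallness in which the proposition is applied, by taking the Meyer constant on a fixed ball.

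\textbf{Main obstacle.} I expect the main difficulty to be these endpoint cases: $s=-3/2$ in (v) combined with $r=\infty$, and $s_1+s_2=0$ in (vi). There the remainder $R$ is only bounded and not summable, so the third indices must be tracked carefully. My first attempt would be to put all the $\ell^1$ summability on the $\dot B^{3/2}_{2,1}$ factor.
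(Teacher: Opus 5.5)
The paper gives no proof of this proposition; it only cites \cite{BCD-Book-2011}, \cite{De-2025}, \cite{Ka-Ko-Sh-2019}. Your Bernstein and paraproduct sketches for (i)--(iv), (vi) and (vii) are the standard arguments and are fine. You are also right that the constant in (v), which is linear in $\|(g,h)\|_{\dot B^{3/2}_{2,1}}$, can only be obtained on bounded sets. For instance, with $\Phi(x)=x^3$, $h=0$, $g=\lambda f$ and $s=1/2$, the left side grows like $\lambda^3$ while the right side grows like $\lambda^2$.

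\textbf{The gap is in item (v) when $r<\infty$.} You invoke the endpoint form of (vi) with $s_1=3/2$, $s_2=s$. That form takes $g-h$ in $\dot B^{s}_{2,\infty}$ and returns only a $\dot B^{s}_{2,\infty}$ bound. The first form of (vi) excludes $s_1=3/2$. So nothing you cite gives $\|(g-h)G\|_{\dot B^s_{2,r}}$.

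\emph{Repairable range, $-3/2<s<3/2$.} Prove directly that $\|fG\|_{\dot B^s_{2,r}}\lesssim\|G\|_{\dot B^{3/2}_{2,1}}\|f\|_{\dot B^s_{2,r}}$, with the third index carried by $f$:
\begin{itemize}
\item $T_Gf$ is controlled by $\|\dot S_{k-1}G\|_{L^\infty}\lesssim\|G\|_{\dot B^{3/2}_{2,1}}$;
\item $T_fG$ is controlled using $s<3/2$;
\item the remainder is controlled by the kernel $2^{(j-k)(s+3/2)}\mathbf{1}_{k\geq j-N_0}$, which lies in $\ell^1$ exactly when $s>-3/2$.
\end{itemize}

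\emph{Endpoint $s=-3/2$, $r<\infty$.} Here no argument can work, because the claim is false, even for small data. Take $\Phi(x)=x^2$, $h=0$, and $g=\lambda f$, where $f$ is real and Schwartz with $\widehat f$ supported in an annulus. Then $g\in\dot B^{-3/2}_{2,r}\cap\dot B^{3/2}_{2,1}$. However, $\widehat{f^2}(0)=\|f\|_{L^2}^2>0$, so $2^{-3j/2}\|\dot\Delta_j(f^2)\|_{L^2}$ tends to a nonzero constant as $j\to-\infty$. Hence $\Phi(g)-\Phi(h)=\lambda^2 f^2\notin\dot B^{-3/2}_{2,r}$.

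Your \emph{main obstacle} is therefore located backwards. At $s=-3/2$ the case $r=\infty$ is the one that holds, via the endpoint of (vi) with $s_1+s_2=0$. The case $r<\infty$ has to be removed from the statement. The paper only applies (v) with $r=\infty$ and small data, so it never uses this false endpoint.
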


To handle the elliptic equations arising in the stationary analysis, we define the operator $\mathbf{B}$ as follows:
\begin{align} 
\mathbf{B}:=\zeta_1-\zeta_2\Delta, \label{2.1.1}   
\end{align}
where $\zeta_1>0$ and $\zeta_2>0$ are  two constants.
The following standard Fourier-multiplier estimate will be used repeatedly.

\begin{prop}\label{Prop2.3}
Let \(1 < p < \infty\), \(1 \leq r \leq \infty\), \(s \in \mathbb{R}\), and \(k > s+2\). Assume \(f \in \dot B_{p,r}^s \cap \dot H^k\), then the problem \(\mathbf{B} u = f\) admits a unique solution \(u \in \dot B_{p,r}^s \cap \dot H^{k+2}\) satisfying
\begin{equation}\label{Best}
\|u\|_{\dot B_{p,r}^s \cap \dot H^{k+2}} \leq C \|f\|_{\dot B_{p,r}^s \cap \dot H^k}.
\end{equation}
\end{prop}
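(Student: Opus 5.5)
The plan is to define the solution directly through its Fourier transform, $\hat u(\xi):=m(\xi)\hat f(\xi)$ with $m(\xi):=(\zeta_1+\zeta_2|\xi|^2)^{-1}$, and to estimate the Besov part and the Sobolev part separately. The symbol $m$ is smooth on all of $\mathbb R^3$, positive, and bounded by $\zeta_1^{-1}$.

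\textbf{Besov part.} Write $\dot\Delta_j u=\mathcal F^{-1}\big(m\,\varphi(2^{-j}\cdot)\big)\ast f$ and set $g_j:=\mathcal F^{-1}\big(m(\xi)\tilde\varphi(2^{-j}\xi)\big)$, where $\tilde\varphi$ equals $1$ on $\operatorname{supp}\varphi$. By Young's inequality it suffices to show $\sup_j\|g_j\|_{L^1}\lesssim 1$. Rescaling gives $\|g_j\|_{L^1}=\|\mathcal F^{-1}(m(2^j\cdot)\tilde\varphi)\|_{L^1}$. This is bounded by a finite number of derivatives of $m(2^j\xi)\tilde\varphi(\xi)$ in $L^1_\xi$ (after integrating by parts to produce weights $(1+|x|^2)^{2}$). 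On the annulus $|\xi|\sim1$ one has $|\partial^\alpha_\xi[m(2^j\xi)]|\lesssim \min\{1,2^{-2j}\}$ uniformly in $j\in\mathbb Z$. Indeed, $m(2^j\xi)=(\zeta_1+\zeta_2 2^{2j}|\xi|^2)^{-1}$, and each $\xi$-derivative keeps the factor $(\zeta_1+\zeta_2 2^{2j}|\xi|^2)^{-1}$ times bounded quotients of the form $2^{2j}|\xi|/(\zeta_1+\zeta_2 2^{2j}|\xi|^2)$. Hence $\|\dot\Delta_j u\|_{L^p}\lesssim\|\dot\Delta_j f\|_{L^p}$ uniformly in $j$. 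Multiplying by $2^{js}$ and taking the $\ell^r$ norm gives $\|u\|_{\dot B^s_{p,r}}\lesssim\|f\|_{\dot B^s_{p,r}}$. The dependence on $\zeta_1,\zeta_2$ enters only through the constant.

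\textbf{Sobolev part.} By Plancherel,
\[
\|u\|_{\dot H^{k+2}}=\big\||\xi|^{k+2}m\hat f\big\|_{L^2}\le \zeta_2^{-1}\big\||\xi|^k\hat f\big\|_{L^2}=\zeta_2^{-1}\|f\|_{\dot H^k},
\]
since $|\xi|^2m(\xi)\le\zeta_2^{-1}$.

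\textbf{Well-definedness and uniqueness.} These are the only points needing care. Since $f\in\dot B^s_{p,r}\subset\mathcal S'_h$, the series $\sum_j\dot\Delta_j u$ defines an element of $\dot B^s_{p,r}$ in $\mathcal S'_h$. Its Fourier transform agrees with $m\hat f$ away from the origin, and $\mathbf B u=f$ holds because $\mathbf B$ commutes with $\dot\Delta_j$.

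For uniqueness, suppose $\mathbf Bv=0$ with $v$ in the class. Then $\dot\Delta_j v$ solves $\mathbf B\dot\Delta_j v=0$ and has compact Fourier support. Since $\zeta_1+\zeta_2|\xi|^2$ does not vanish, $\widehat{\dot\Delta_j v}=0$ for every $j$, so $v=0$ in $\mathcal S'_h$. Here it is essential that $\zeta_1>0$: the symbol has no zeros at all, so no polynomial ambiguity survives, unlike for the pure Laplacian.

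The hypothesis $k>s+2$ guarantees that the intersection $\dot B^s_{p,r}\cap\dot H^{k+2}$ is meaningful as a single realized space; it is not needed for the multiplier bounds themselves. The main, though mild, obstacle is the uniform-in-$j$ kernel bound for the low frequencies $j\to-\infty$. There $m(2^j\cdot)\to\zeta_1^{-1}$ smoothly, so the bound follows from the derivative estimate above.
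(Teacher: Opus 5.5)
Your proof is correct and follows the same route as the paper: you define $\hat u=m\hat f$ with $m=(\zeta_1+\zeta_2|\xi|^2)^{-1}$, bound the Besov norm block by block, get the $\dot H^{k+2}$ bound from Plancherel and $|\xi|^2m(\xi)\le\zeta_2^{-1}$, and deduce uniqueness from the nonvanishing symbol. The only difference is that you verify $\|\dot\Delta_j u\|_{L^p}\lesssim\|\dot\Delta_j f\|_{L^p}$ through a uniform $L^1$ bound on the rescaled kernels instead of citing the Mikhlin theorem; this is more elementary and covers all $1\le p\le\infty$, and, like the paper's proof, yours never uses $k>s+2$.
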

\begin{proof}
Taking the Fourier transform of \(\mathbf{B}u=f\), we obtain
\begin{align*}
\widehat{u}(\xi)
=
m(\xi)\widehat{f}(\xi),  \quad\text{where}\quad  m(\xi):=\frac{1}{\zeta_1+\zeta_2|\xi|^2}.
\end{align*}
Since \(\zeta_1,\zeta_2>0\), the symbol \(m\) is smooth and satisfies the
Mikhlin multiplier condition
\begin{align*}
\sup_{\xi\neq0}
|\xi|^{|\alpha|}
\big|\partial_\xi^\alpha m(\xi)\big|
<\infty,    
\end{align*}
for any multi-index \(\alpha\). Hence, by the Mikhlin multiplier
theorem and the dyadic characterization of homogeneous Besov spaces, we derive
\begin{align*}
\|u\|_{\dot B^s_{p,r}}
\leq C\|f\|_{\dot B^s_{p,r}}.    
\end{align*}

Moreover, by direct calculation, it can be deduced that
\begin{align*}
 \|u\|_{\dot H^{k+2}}=
\left\|
\frac{|\xi|^{k+2}}{\zeta_1+\zeta_2|\xi|^2}
\widehat{f}(\xi)
\right\|_{L^2_\xi} 
= 
\left\|
\frac{|\xi|^2}{\zeta_1+\zeta_2|\xi|^2}
|\xi|^k\widehat{f}(\xi)
\right\|_{L^2_\xi}
\leq
\frac{1}{\zeta_2}\|f\|_{\dot H^k}.   
\end{align*}
Therefore, we arrive at
\begin{align*}
\|u\|_{\dot B^s_{p,r}\cap\dot H^{k+2}}
\leq
C\|f\|_{\dot B^s_{p,r}\cap\dot H^k}.    
\end{align*}

Finally, the strict positivity of \(\zeta_1+\zeta_2|\xi|^2\) implies uniqueness. The proof of Proposition \ref{Prop2.3} is complete.
\end{proof}

Next, we  introduce the commutator estimates that will be employed in the high-order energy argument.

\begin{lem}[{\!\!\cite[Appendix]{commutator1},\cite{commutator2}}]\label{LA.1}
Let $g$ and $h$ be two Schwarz functions. For $k\geq 1$, one has
\begin{align*}
\|\nabla^{k}(gh) \|_{L^r} \lesssim &\, \|g\|_{L^{r_1} }\|\nabla^{k}h\|_{L^{r_2} }+\|h\|_{L^{r_3} }\|\nabla^{k}g\|_{L^{r_4} },\\
\|\nabla^{k}(gh)-g\nabla^k h \|_{L^r} \lesssim  &\, \|\nabla g\|_{L^{r_1}}\|\nabla^{k-1}h\|_{L^{r_2}}+\|h\|_{L^{r_3}}\|\nabla^{k}g\|_{L^{r_4}},    
\end{align*}
where $1<r,r_2,r_4<\infty$ and $r_i(1\leq i\leq 4)$ satisfy 
\begin{align*}
\frac{1}{r_1}+\frac{1}{r_2}=\frac{1}{r_3}+\frac{1}{r_4}=\frac{1}{r}.   
\end{align*}
\end{lem}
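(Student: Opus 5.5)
We will prove Lemma~\ref{LA.1} in the classical way: expand the product with the Leibniz rule, and control every intermediate term by the Gagliardo--Nirenberg interpolation inequality. The exponents are chosen so that Hölder's inequality followed by Young's inequality splits each term into the two endpoint products on the right-hand side. Since \(g,h\) are Schwartz functions, all quantities are finite and no approximation argument is needed. The interpolation inequality we rely on is
\begin{align*}
\|\nabla^{j}f\|_{L^{p}}\lesssim \|f\|_{L^{a}}^{1-\theta}\|\nabla^{m}f\|_{L^{b}}^{\theta},\qquad \theta=\frac{j}{m},\quad \frac1p=\frac{1-\theta}{a}+\frac{\theta}{b},
\end{align*}
valid for \(0\le j\le m\), \(1\le a\le\infty\), \(1<b<\infty\). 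Here \(b\) is the exponent carrying the top derivative, which will always be \(r_2\) or \(r_4\); this is why these two exponents must lie in \((1,\infty)\).

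\emph{First estimate.} Write \(\nabla^k(gh)=\sum_{j=0}^{k}\binom kj\nabla^j g\,\nabla^{k-j}h\). The terms \(j=0\) and \(j=k\) are bounded directly by Hölder's inequality by \(\|g\|_{L^{r_1}}\|\nabla^k h\|_{L^{r_2}}\) and \(\|h\|_{L^{r_3}}\|\nabla^k g\|_{L^{r_4}}\), respectively. For \(0<j<k\), set \(\theta=j/k\) and interpolate:
\begin{itemize}
\item \(\nabla^j g\) between \(g\in L^{r_1}\) and \(\nabla^k g\in L^{r_4}\), giving an exponent \(p_j\) with \(1/p_j=(1-\theta)/r_1+\theta/r_4\);
\item \(\nabla^{k-j}h\) between \(h\in L^{r_3}\) and \(\nabla^k h\in L^{r_2}\), with parameter \(1-\theta\), giving \(1/q_j=\theta/r_3+(1-\theta)/r_2\).
\end{itemize}
Then
\begin{align*}
\frac1{p_j}+\frac1{q_j}=(1-\theta)\Big(\frac1{r_1}+\frac1{r_2}\Big)+\theta\Big(\frac1{r_3}+\frac1{r_4}\Big)=\frac1r,
\end{align*}
so Hölder's inequality applies. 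It gives the bound
\begin{align*}
\big(\|g\|_{L^{r_1}}\|\nabla^kh\|_{L^{r_2}}\big)^{1-\theta}\big(\|h\|_{L^{r_3}}\|\nabla^kg\|_{L^{r_4}}\big)^{\theta},
\end{align*}
and Young's inequality \(a^{1-\theta}b^\theta\le a+b\) finishes the estimate.

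\emph{Commutator estimate.} Subtracting \(g\nabla^k h\) removes the \(j=0\) term, and we may assume \(k\ge2\); for \(k=1\) the commutator equals \(h\nabla g\) and is bounded by Hölder's inequality. For \(1\le j\le k\), write \(\nabla^j g=\nabla^{j-1}(\nabla g)\) and use \(\theta=(j-1)/(k-1)\):
\begin{itemize}
\item interpolate \(\nabla^{j-1}(\nabla g)\) between \(\nabla g\in L^{r_1}\) and \(\nabla^{k-1}(\nabla g)=\nabla^k g\in L^{r_4}\);
\item interpolate \(\nabla^{k-j}h\) between \(h\in L^{r_3}\) and \(\nabla^{k-1}h\in L^{r_2}\), with parameter \(1-\theta\).
\end{itemize}
The same exponent bookkeeping yields \(1/r\). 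The endpoint \(j=1\) is the product \(\|\nabla g\|_{L^{r_1}}\|\nabla^{k-1}h\|_{L^{r_2}}\), and \(j=k\) is \(\|h\|_{L^{r_3}}\|\nabla^k g\|_{L^{r_4}}\). Young's inequality again concludes.

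\emph{Main obstacle.} The only delicate point is the validity of the Gagliardo--Nirenberg inequality when the lower-order exponent \(r_1\) or \(r_3\) equals \(\infty\), which is the case used in practice, e.g.\ \(\|g\|_{L^\infty}\). This form of the inequality holds as long as the top-order exponent lies in \((1,\infty)\). The interpolated exponents \(p_j,q_j\) are convex combinations of admissible ones, so they lie in \([1,\infty]\) automatically. If one wants to avoid this case analysis, an alternative is Bony's paraproduct decomposition \(gh=T_gh+T_hg+R(g,h)\), with the Littlewood--Paley square-function characterization of \(L^r\), \(1<r<\infty\). That route is the standard Kato--Ponce/Coifman--Meyer argument given in the cited references, and we would invoke it there rather than reprove it.
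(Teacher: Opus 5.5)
Your proof is correct for integer $k$, which is the only case the paper uses, but it takes a different route from the one the paper relies on. The paper gives no argument of its own. It cites Kato--Ponce and Kenig--Ponce--Vega, whose proofs are harmonic-analytic (Coifman--Meyer multipliers, paraproducts, Littlewood--Paley square functions on $L^r$ with $1<r<\infty$), because they target fractional operators such as $J^s=(1-\Delta)^{s/2}$ or $D^s$ with real $s>0$.

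The paper only ever applies the lemma with integer orders ($\partial^\alpha$, $\nabla^{|\alpha|}$), so your Leibniz expansion combined with the scale-invariant Gagliardo--Nirenberg inequality, H\"older, and $a^{1-\theta}b^{\theta}\le a+b$ suffices. Your bookkeeping checks out. The interpolation parameters for the $g$- and $h$-factors are complementary: $\theta=j/k$ for the product, and $\theta=(j-1)/(k-1)$ for the commutator after writing $\nabla^j g=\nabla^{j-1}(\nabla g)$. Hence the exponents add up to exactly $1/r$, and the endpoint terms reduce to pure H\"older.

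Your route gives a self-contained, elementary proof. In the integer case it is also slightly more general than the statement. The Gagliardo--Nirenberg inequality with $\theta=j/m$, $0<j<m$, also holds for top-order exponents $b=1$ and $b=\infty$, since Nirenberg's exceptional cases concern only $\theta=1$ or $j=0$. So your argument never actually uses $1<r,r_2,r_4<\infty$.

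Your sentence attributing that hypothesis to the interpolation step is therefore a misattribution, though a harmless one. The restriction is inherited from the singular-integral machinery needed for fractional derivatives. What the cited route buys is precisely that fractional version, which a Leibniz expansion cannot reach.
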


Applying Lemma \ref{LA.1}, we establish the following estimate, which is also necessary to control the nonlinear source terms.

\begin{lem}\label{Lem:L6ab}
Let \(N \geq 4\) and $\delta>0$. Suppose that \(\tilde{a}\) is a vector field and \(\tilde{b}\) is a tensor field satisfying  
\begin{align*}
 \|\tilde{a}\|_{\dot{B}_{2,\infty}^{1/2}\cap H^{N+2}}
+\|\tilde{b}\|_{\dot{B}_{2,\infty}^{-1/2}\cap H^{N+1}}
\leq \delta.   
\end{align*}
Then, for any smooth vector field \(\tilde c\) with \(\nabla \tilde c\in H^N\), it holds that
\begin{equation}\label{L6ab1}
\sum_{|\alpha|\leq N}
\left|
\left\langle
\partial^\alpha \tilde c,\partial^\alpha(\tilde{a}\cdot\nabla \tilde c)
\right\rangle
\right|
\leq C\delta\|\nabla \tilde c\|_{H^N}^2,
\end{equation}
and
\begin{equation}\label{L6ab2}
\sum_{|\alpha|\leq N}
\left|
\left\langle
\partial^\alpha \tilde c,\partial^\alpha(\tilde{b}\tilde c)
\right\rangle
\right|
\leq C\delta\|\nabla \tilde c\|_{H^N}^2.
\end{equation}
\end{lem}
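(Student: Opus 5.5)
For each multi-index $\alpha$ with $|\alpha|\le N$, I will first split $\partial^\alpha(\tilde a\cdot\nabla\tilde c)$ into the top-order term $\tilde a\cdot\nabla\partial^\alpha\tilde c$ and a commutator $[\partial^\alpha,\tilde a\cdot\nabla]\tilde c$. The top-order term is handled by integration by parts:
\begin{align*}
\left\langle \partial^\alpha\tilde c,\tilde a\cdot\nabla\partial^\alpha\tilde c\right\rangle=-\frac12\left\langle \dive\tilde a,|\partial^\alpha\tilde c|^2\right\rangle .
\end{align*}
This pairing is bounded by $\|\dive\tilde a\|_{L^{3/2}}\|\partial^\alpha\tilde c\|_{L^6}^2$, but that creates a problem: the weak Besov norm does not control $\|\nabla\tilde a\|_{L^{3/2}}$. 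I will therefore handle the case $\alpha=0$ separately. In that case I bound the pairing directly as $|\langle\tilde c,\tilde a\cdot\nabla\tilde c\rangle|\le\|\tilde a\|_{L^3}\|\tilde c\|_{L^6}\|\nabla\tilde c\|_{L^2}$, and use $\|\tilde c\|_{L^6}\lesssim\|\nabla\tilde c\|_{L^2}$ (Sobolev, valid since $\tilde c$ vanishes at infinity in the sense that $\nabla\tilde c\in L^2$ determines it). The remaining ingredient is $\|\tilde a\|_{L^3}\lesssim\|\tilde a\|_{\dot B^{1/2}_{2,1}}$. This follows from the interpolation of Proposition \ref{BesovP}(iii) between $\dot B^{1/2}_{2,\infty}$ and $\dot B^{1}_{2,\infty}\supset H^{N+2}$ at level $s\in(1/2,1)$, combined with the embedding (iv). A cleaner route is to write $\dot B^{1/2}_{2,1}\hookrightarrow\dot B^0_{3,1}\hookrightarrow L^3$, and to bound $\|\tilde a\|_{\dot B^{1/2}_{2,1}}$ by splitting at $j\le0$ and $j>0$ at slightly different regularities. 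For $|\alpha|\ge1$, $\partial^\alpha\tilde c$ is itself a derivative of $\nabla\tilde c$, so $\|\partial^\alpha\tilde c\|_{L^6}\lesssim\|\nabla\partial^\alpha\tilde c\|_{L^2}$ is admissible only if $|\alpha|+1\le N+1$. Since this is fine, I will instead bound the pairing by $\|\dive\tilde a\|_{L^\infty}\|\partial^\alpha\tilde c\|_{L^2}^2$. Here $\|\nabla\tilde a\|_{L^\infty}\lesssim\|\tilde a\|_{H^{N+2}}$ and $\|\partial^\alpha\tilde c\|_{L^2}\le\|\nabla\tilde c\|_{H^{N}}$.

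For the commutator I will use Lemma \ref{LA.1} with $g=\tilde a$ and $h=\nabla\tilde c$ at order $k=|\alpha|\ge1$. This gives
\begin{align*}
\|[\partial^\alpha,\tilde a]\nabla\tilde c\|_{L^2}\lesssim\|\nabla\tilde a\|_{L^\infty}\|\nabla^{|\alpha|}\tilde c\|_{L^2}+\|\nabla\tilde c\|_{L^{2}}\|\nabla^{|\alpha|}\tilde a\|_{L^\infty}.
\end{align*}
Both factors on $\tilde a$ are controlled by $\|\tilde a\|_{H^{N+2}}$ through $H^2\hookrightarrow L^\infty$, since $|\alpha|+2\le N+2$. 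Pairing with $\partial^\alpha\tilde c$ in $L^2$, where $\|\partial^\alpha\tilde c\|_{L^2}\le\|\nabla\tilde c\|_{H^N}$, then yields \eqref{L6ab1}.

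For \eqref{L6ab2} the terms $\partial^\alpha(\tilde b\tilde c)$ contain $\tilde c$ undifferentiated, and this is the main difficulty: $\tilde c$ itself is only in $L^6$. I will expand by Leibniz, $\partial^\alpha(\tilde b\tilde c)=\sum_{\beta\le\alpha}\binom{\alpha}{\beta}\partial^{\alpha-\beta}\tilde b\,\partial^\beta\tilde c$. Terms with $\beta\ne0$ involve only derivatives of $\tilde c$ and are bounded by $\|\partial^{\alpha-\beta}\tilde b\|_{L^\infty}\|\nabla\tilde c\|_{H^N}$. This uses $\|\tilde b\|_{H^{N+1}}$, and for $|\alpha-\beta|\le N-1$ it is fine by $H^2\subset L^\infty$. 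When $|\alpha-\beta|=N$ I instead use $\|\partial^{\alpha-\beta}\tilde b\|_{L^3}\|\partial^\beta\tilde c\|_{L^6}$, with $|\beta|=|\alpha|-N\le0$, a contradiction unless $\beta=0$; so these are all covered. The terms with $\beta=0$ read $\langle\partial^\alpha\tilde c,\partial^\alpha\tilde b\,\tilde c\rangle$. I estimate them by $\|\partial^\alpha\tilde c\|_{L^6}\|\partial^\alpha\tilde b\|_{L^{3/2}}\|\tilde c\|_{L^6}$ when $\alpha=0$, and by $\|\partial^\alpha\tilde c\|_{L^2}\|\partial^\alpha\tilde b\|_{L^3}\|\tilde c\|_{L^6}$ when $|\alpha|\ge1$. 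The latter needs $\|\partial^\alpha\tilde b\|_{L^3}\lesssim\|\tilde b\|_{H^{N+1}}$, which holds since $|\alpha|+1/2\le N+1$. The delicate point is the low-frequency bound $\|\tilde b\|_{L^{3/2}}$, which is not available from $\dot B^{-1/2}_{2,\infty}$. To overcome it, I will instead use duality (Proposition \ref{BesovP}(ii)): $|\langle\tilde b,\tilde c\otimes\tilde c\rangle|\lesssim\|\tilde b\|_{\dot B^{-1/2}_{2,\infty}\cap\dot B^{1/2}_{2,\infty}}\|\tilde c\otimes\tilde c\|_{\dot B^{1/2}_{2,1}}$. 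The product estimate (vi) with $s_1=s_2=1$ gives $\|\tilde c\otimes\tilde c\|_{\dot B^{1/2}_{2,1}}\lesssim\|\tilde c\|_{\dot B^{1}_{2,2}}^2=\|\nabla\tilde c\|_{L^2}^2$. Since only the $\dot B^{1/2}_{2,\infty}$-dual level is needed, the interpolation of $\tilde b$ between $-1/2$ and $N+1$ supplies $\dot B^{1/2}_{2,\infty}$, and this closes \eqref{L6ab2}.
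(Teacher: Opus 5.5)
Your overall structure matches the paper's. You split \eqref{L6ab1} into a top-order term and a commutator, and for \eqref{L6ab2} you use Leibniz plus a $\dot B^{-1/2}_{2,\infty}$--$\dot B^{1/2}_{2,1}$ duality. Your estimates for $|\alpha|\ge1$ and your treatment of \eqref{L6ab2} go through. The gap is the case $\alpha=0$ of \eqref{L6ab1}, which you reduce to $\|\tilde a\|_{L^3}\lesssim\delta$. Neither justification you give works:
\begin{itemize}
\item Interpolating between $\dot B^{1/2}_{2,\infty}$ and $\dot B^{1}_{2,\infty}$ yields $\dot B^{s}_{2,1}$ only for $s>1/2$. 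That space embeds into $L^{p}$ with $p=3/(\frac32-s)>3$, never into $L^3$.
\item Nothing in $\dot B^{1/2}_{2,\infty}\cap\dot H^{N+2}$ gives low-frequency regularity below $1/2$, which you would need to make $\sum_{j\le0}2^{j/2}\|\dot\Delta_j\tilde a\|_{L^2}$ converge.
\end{itemize}
Indeed, $\langle x\rangle^{-1}$ lies in $\dot B^{1/2}_{2,\infty}\cap\dot H^m$ for every $m\ge1$ but not in $L^3(\mathbb R^3)$.

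The bound can be recovered from the $L^2$ part of the inhomogeneous $H^{N+2}$ norm, via $\|\tilde a\|_{L^3}\lesssim\|\tilde a\|_{L^2}^{1/2}\|\nabla\tilde a\|_{L^2}^{1/2}$. The literal statement grants this, but you never use it. It is also exactly the information that is unavailable where the lemma is applied. Lemma~\ref{Gest} takes $\tilde a=u_\eps^*$, and its $|x|^{-1}$ tail (the reason for the weak-Besov framework) keeps it out of both $L^2$ and $L^3$.

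The missing idea is the one you already use for $\tilde b$. After integrating by parts, pair $\dive\tilde a\in\dot B^{-1/2}_{2,\infty}$ against $|\partial^\alpha\tilde c|^2\in\dot B^{1/2}_{2,1}$, and apply Proposition~\ref{BesovP}(vi) with $s_1=s_2=1$:
\[
\tfrac12\bigl|\langle\dive\tilde a,|\partial^\alpha\tilde c|^2\rangle\bigr|
\lesssim\|\dive\tilde a\|_{\dot B^{-1/2}_{2,\infty}}\bigl\||\partial^\alpha\tilde c|^2\bigr\|_{\dot B^{1/2}_{2,1}}
\lesssim\|\tilde a\|_{\dot B^{1/2}_{2,\infty}}\|\nabla\partial^\alpha\tilde c\|_{L^2}^2 .
\]
This is the paper's argument, used for every $|\alpha|\le N$. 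With it, only homogeneous information on $\tilde a$ enters: $\|\tilde a\|_{\dot B^{1/2}_{2,\infty}}$, $\|\nabla\tilde a\|_{L^\infty}$ and $\|\nabla^{|\alpha|}\tilde a\|_{L^2}$.

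Two smaller points:
\begin{itemize}
\item In your duality for $\langle\tilde b,\tilde c\otimes\tilde c\rangle$, the relevant pairing is $\dot B^{-1/2}_{2,\infty}$ against $\dot B^{1/2}_{2,1}$. No $\dot B^{1/2}_{2,\infty}$ norm of $\tilde b$ is involved, so your remark about the ``$\dot B^{1/2}_{2,\infty}$-dual level'' is harmless but confused.
\item Your commutator term $\|\nabla\tilde c\|_{L^2}\|\nabla^{|\alpha|}\tilde a\|_{L^\infty}$ is not covered by Lemma~\ref{LA.1}, which requires $r_4<\infty$. It does follow directly from the Leibniz rule, since every $\partial^\beta\tilde a$ with $1\le|\beta|\le N$ is bounded.
\end{itemize}
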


\begin{proof}
For \(|\alpha|\leq N\), we write
\begin{align*}
\partial^\alpha(\tilde{a}\cdot\nabla \tilde c)
=\tilde{a}\cdot\nabla\partial^\alpha \tilde c
+[\partial^\alpha,\tilde{a}\cdot\nabla]\tilde c.    
\end{align*}
Integration by parts and Proposition \ref{BesovP} give
\begin{align*}
\left|
\left\langle
\partial^\alpha \tilde c,\tilde{a}\cdot\nabla\partial^\alpha \tilde c
\right\rangle
\right|=
\frac12
\left|
\left\langle
\operatorname{div}\tilde{a},|\partial^\alpha \tilde c|^2
\right\rangle
\right| 
 \lesssim
\|\tilde{a}\|_{\dot B^{1/2}_{2,\infty}}
\|\nabla\partial^\alpha \tilde c\|_{L^2}^2.
\end{align*}
For \(1\leq|\alpha|\leq N\), Lemma~\ref{LA.1} and Sobolev embedding
yield
\begin{align*}
\|[\partial^\alpha,\tilde{a}\cdot\nabla]\tilde c\|_{L^2}
\lesssim
\|\nabla \tilde{a}\|_{L^\infty}
\|\nabla^{|\alpha|}\tilde c\|_{L^2}
+\|\nabla^{|\alpha|}\tilde{a}\|_{L^2}
\|\nabla \tilde c\|_{L^\infty}\lesssim
\delta\|\nabla \tilde c\|_{H^N}.
\end{align*}
Since the commutator vanishes for \(|\alpha|=0\), summing over
\(|\alpha|\leq N\) proves \eqref{L6ab1}.

Similarly, we have
\begin{align*}
\partial^\alpha(\tilde{b}\tilde c)=\tilde{b}\partial^\alpha \tilde c+[\partial^\alpha,\tilde{b}]\tilde c.    
\end{align*}
By Proposition~\ref{BesovP}, we have
\begin{align*}
|
\langle
\partial^\alpha \tilde c,\tilde{b}\partial^\alpha \tilde c
\rangle
|
\lesssim
\|\tilde{b}\|_{\dot B^{-1/2}_{2,\infty}}
\|\nabla\partial^\alpha \tilde c\|_{L^2}^2.    
\end{align*}
For \(1\leq|\alpha|\leq N\), Lemma~\ref{LA.1} and Sobolev inequality
give
\begin{align*}
\|[\partial^\alpha,\tilde{b}]\tilde c\|_{L^2}
\lesssim
\|\nabla \tilde{b}\|_{L^3}
\|\nabla^{|\alpha|-1}\tilde c\|_{L^6}
+\|\nabla^{|\alpha|}\tilde{b}\|_{L^2}\|\tilde c\|_{L^\infty}
\lesssim
\delta\|\nabla \tilde c\|_{H^N}.
\end{align*}
Summing over \(|\alpha|\leq N\) proves \eqref{L6ab2}.
\end{proof}

The nonlinear terms involve the products of vector-valued functions as well as the compositions with the elliptic viscous operator
\begin{align*}
 \A:=\mu \Delta + \nu \nabla \text{div}.
\end{align*}
To handle these terms, we establish low‑high compatible product estimates adapted to the intersection space
\begin{equation}\label{Eq:HybridX}
\mathcal X:=\dot B^{1/2}_{2,\infty}\cap\dot B^{3/2}_{2,1},
\end{equation}
which will be used later in Lemma \ref{Lem:ErrorSources}.
\begin{lem}[Low-high compatible product estimates]\label{Lem:HybridProduct}
Let \(2<p<6\) and \(1/2<s<3/p\).  For vector fields $\tilde a$, $\tilde b$, and a scalar field $\tilde c$, the following bounds hold:
\begin{align}
\|\tilde a\cdot\nabla \tilde b\|_{\dot B^{s-2}_{p,1}}
&\leq C\bigl(
\|\tilde a\|_{\dot B^s_{p,1}}\|\tilde b\|_{\mathcal X}
+\|\tilde a\|_{\mathcal X}\|\tilde b\|_{\dot B^s_{p,1}}\bigr),
\label{Eq:HybridTransport}\\
\|\tilde c\A \tilde b\|_{\dot B^{s-2}_{p,1}}
&\leq C\bigl(
\|\tilde c\|_{\mathcal X}\|\tilde b\|_{\dot B^s_{p,1}}
+\|\tilde c\|_{\dot B^s_{p,1}}\|\tilde b\|_{\mathcal X}\bigr).
\label{Eq:HybridViscosity}
\end{align}
\end{lem}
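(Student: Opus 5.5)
We will prove both estimates with Bony's paraproduct decomposition
\[
fg=T_fg+T_gf+R(f,g),
\]
and estimate each piece in $\dot B^{s-2}_{p,1}$. Throughout we use two ingredients. The first is the embedding $\mathcal X\hookrightarrow \dot B^{3/2}_{2,1}\hookrightarrow \dot B^{3/p}_{p,1}$, together with $\mathcal X\hookrightarrow\dot B^{1/2}_{2,\infty}\hookrightarrow \dot B^{3/p-1}_{p,\infty}$; these follow from Proposition~\ref{BesovP}(iv), since $p>2$. The second is the standard paraproduct bounds: $T_f g$ is controlled by $\|f\|_{\dot B^{-\alpha}_{\infty,\infty}}\|g\|_{\dot B^{\sigma+\alpha}_{p,1}}$ for $\alpha>0$, and by $\|f\|_{L^\infty}\|g\|_{\dot B^{\sigma}_{p,1}}$ for $\alpha=0$. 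The remainder $R(f,g)$ is controlled in $\dot B^{\sigma_1+\sigma_2-3/q}_{q',1}$-type spaces whenever the sum of the regularities is positive, possibly after passing through $L^{p/2}$ and the embedding $\dot B^{\tau}_{p/2,1}\hookrightarrow \dot B^{\tau-3/p}_{p,1}$.

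For \eqref{Eq:HybridTransport} we write $\tilde a\cdot\nabla\tilde b$, with $\nabla\tilde b\in\dot B^{s-1}_{p,1}$ when $\tilde b\in\dot B^s_{p,1}$, and $\nabla \tilde b\in \dot B^{1/2}_{2,1}\cap\dot B^{-1/2}_{2,\infty}$ when $\tilde b\in\mathcal X$. We treat the three pieces as follows.

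\textbf{The term $T_{\tilde a}\nabla\tilde b$.} We use $\tilde a\in\mathcal X\subset \dot B^{1/2}_{2,\infty}\hookrightarrow\dot B^{-1}_{\infty,\infty}$ together with $\nabla\tilde b\in\dot B^{s-1}_{p,1}$. This gives regularity $s-2$, so the bound is $\|\tilde a\|_{\mathcal X}\|\tilde b\|_{\dot B^s_{p,1}}$.

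\textbf{The term $T_{\nabla\tilde b}\tilde a$.} We use $\nabla \tilde b\in \dot B^{-1/2}_{2,\infty}\hookrightarrow \dot B^{-2}_{\infty,\infty}$ together with $\tilde a\in\dot B^{s}_{p,1}$. This gives regularity $s-2$, so the bound is $\|\tilde a\|_{\dot B^s_{p,1}}\|\tilde b\|_{\mathcal X}$.

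\textbf{The remainder $R(\tilde a,\nabla\tilde b)$.} We pair $\tilde a\in\dot B^s_{p,1}$ with $\nabla\tilde b\in\dot B^{1/2}_{2,1}$ (from $\tilde b\in\dot B^{3/2}_{2,1}$). The product lies in $\dot B^{s+1/2}_{r,1}$ with $1/r=1/p+1/2$. The sum of regularities is $s+1/2>0$, so the remainder estimate applies. The embedding into $\dot B^{s+1/2-3/2}_{p,1}=\dot B^{s-1}_{p,1}$ then lands one derivative above the target, which is bad: homogeneous spaces do not embed downward in regularity. So we must instead use the low-regularity part $\nabla\tilde b\in\dot B^{-1/2}_{2,\infty}$. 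This gives $R\in\dot B^{s-1/2}_{r,1}\hookrightarrow \dot B^{s-2}_{p,1}$, valid because $s-1/2>0$; this is exactly where $s>1/2$ is needed.

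Alternatively, the low/high mixed pairings can be handled by Proposition~\ref{BesovP}(vii) with $\alpha,\beta$ chosen appropriately.

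For \eqref{Eq:HybridViscosity} the structure is identical, with $\nabla\tilde b$ replaced by $\A\tilde b$. Since $\A\tilde b$ is two derivatives of $\tilde b$, we have $\A\tilde b\in\dot B^{s-2}_{p,1}$ or $\A\tilde b\in\dot B^{-1/2}_{2,1}\cap\dot B^{-3/2}_{2,\infty}$, and the roles of the two factors are swapped.

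\textbf{The term $T_{\tilde c}\A\tilde b$.} Using $\tilde c\in L^\infty$ (from $\dot B^{3/2}_{2,1}$), the bound is $\|\tilde c\|_{\mathcal X}\|\tilde b\|_{\dot B^s_{p,1}}$.

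\textbf{The term $T_{\A\tilde b}\tilde c$.} We use $\A\tilde b\in \dot B^{-3/2}_{2,\infty}\hookrightarrow\dot B^{-3}_{\infty,\infty}$ with $\tilde c\in \dot B^{s}_{p,1}$. This only yields $\dot B^{s-3}$, which falls short of the target. We therefore use $\A\tilde b\in\dot B^{-1/2}_{2,1}\hookrightarrow \dot B^{-2}_{\infty,1}$ instead, which gives $s-2$ as required.

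\textbf{The remainder $R(\tilde c,\A\tilde b)$.} We pair $\tilde c\in\dot B^s_{p,1}$ with $\A\tilde b\in\dot B^{-1/2}_{2,1}$. The regularity sum is $s-1/2>0$, and the result lies in $\dot B^{s-1/2}_{r,1}\hookrightarrow\dot B^{s-2}_{p,1}$.

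The terms in which $\tilde c$ sits in $\mathcal X$ and $\tilde b$ in $\dot B^s_{p,1}$ are handled symmetrically.

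The constraint $s<3/p$ enters through the paraproducts in which the $\dot B^s_{p,1}$ factor appears as the low-frequency function. Typical examples are $T_{\tilde a}\nabla\tilde b$ with $\tilde a\in\dot B^s_{p,1}\hookrightarrow\dot B^{s-3/p}_{\infty,1}$, which has negative index, paired with $\nabla\tilde b\in\dot B^{1/2}_{2,1}$; and $T_{\tilde c}\A\tilde b$ with $\tilde c\in\dot B^s_{p,1}$ and $\A \tilde b\in \dot B^{-1/2}_{2,1}$. These give $\dot B^{s-3/p+1/2}_{2,1}$-type bounds, which must then be embedded into $\dot B^{s-2}_{p,1}$ using $p<6$ and $1/2-1/p<1/3$.

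The main obstacle is the bookkeeping of the exponent $p$ versus $2$ in these mixed pieces. We need the Bernstein embeddings from $L^2$- or $L^{2p/(p+2)}$-based spaces into $L^p$ to land exactly at regularity $s-2$. Where they overshoot, we will choose between the $\dot B^{1/2}_{2,\infty}$ and $\dot B^{3/2}_{2,1}$ components of $\mathcal X$ to hit the target index, and interpolate between them if necessary via Proposition~\ref{BesovP}(iii). In these mixed pieces we expect the conditions $s>1/2$, $s<3/p$ and $2<p<6$ to be exactly what makes every index positive or negative as required.
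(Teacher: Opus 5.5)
Your six-piece Bony argument is correct and is essentially the paper's proof. The paraproducts are handled by placing the $\mathcal X$ factor at low frequency, and the remainder by Bernstein from $L^{2p/(p+2)}$ to $L^p$, with $s>1/2$ giving the summability; you use a single component of $\mathcal X$ for each piece instead of the paper's $j\le0$/$j\ge1$ split, which is equally valid.

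Drop your closing paragraph, because it is superfluous and partly wrong. Neither $s<3/p$ nor $p<6$ enters the estimates; only $s>1/2$ and $p\ge2$ are used. Also, the transport pairing you describe there, $\dot B^{s-3/p+1/2}_{2,1}$, embeds into $\dot B^{s-1}_{p,1}$ rather than $\dot B^{s-2}_{p,1}$.
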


\begin{proof}
    By Bony’s decomposition, we write \(fg=T_fg+T_gf+R(f,g)\).  
    We first record some low-high frequency estimates. By Bernstein's inequality, for \(m=0,1,2\),
\begin{equation}\label{Eq:HybridLowHighLinfty}
\|\dot S_{j-1}\nabla^m f\|_{L^\infty}
\lesssim
\begin{cases}
2^{(m+1)j}\|f\|_{\dot B^{1/2}_{2,\infty}},
& j\leq0,\\[1mm]
2^{mj}\|f\|_{\dot B^{3/2}_{2,1}},
& j\geq1.
\end{cases}
\end{equation}
These bounds give the two paraproducts in \eqref{Eq:HybridTransport} after multiplication by $2^{j(s-2)}$.

By the Fourier-support properties of the remainder, there exists a fixed integer $N_0$ such that
    \begin{align*}
        \dot\Delta_jR(\tilde a,\nabla \tilde b)=\sum_{k\geq j-N_0}\dot\Delta_j\bigl(\dot\Delta_k \tilde a\cdot\nabla\widetilde{\dot\Delta}_k\tilde b\bigr),
    \end{align*}
 where $\widetilde{\dot\Delta}_k=\dot\Delta_{k-1}+\dot\Delta_k+\dot\Delta_{k+1}$.
For $k\geq j-N_0$, Bernstein's and H\"older's inequalities give
    \begin{align*}
        \|\dot\Delta_j(\dot\Delta_k \tilde a\,\nabla\widetilde{\dot\Delta}_k \tilde b)\|_{L^p}
        \leq C2^{3j/2}2^k\|\dot\Delta_k \tilde a\|_{L^p}\|\widetilde{\dot\Delta}_k \tilde b\|_{L^2}.
    \end{align*}
    Since $s>1/2$, we have
    \begin{align}\notag
        \|R(\tilde a,\nabla \tilde b)\|_{\dot B^{s-2}_{p,1}}
        &\leq C\sum_{k\in\mathbb Z}\sum_{j\leq k+N_0} 2^{j(s-2)}
        2^{3j/2}2^k\|\dot\Delta_k \tilde a\|_{L^p}\|\widetilde{\dot\Delta}_k \tilde b\|_{L^2}\\
        &\leq C \sum_{k\in\mathbb Z}2^{k(s+1/2)}\|\dot\Delta_k \tilde a\|_{L^p}\|\widetilde{\dot\Delta}_k \tilde b\|_{L^2}.
        \label{Eq:HybridRemainderTransport}
    \end{align}
    If $k\leq 0$, we have 
    \begin{align*}
    2^{k(s+1/2)}\|\dot\Delta_k \tilde a\|_{L^p}\|\widetilde{\dot\Delta}_k \tilde b\|_{L^2}\leq C2^{ks}\|\dot\Delta_k\tilde a\|_p\|\tilde b\|_{\dot B^{1/2}_{2,\infty}},
    \end{align*}
    which implies that the summation in $k\leq 0$ is controlled by $\|\tilde a\|_{\dot B^s_{p,1}}\|\tilde b\|_{\dot B^{1/2}_{2,\infty}}$.
    For $k\geq 1$, we have $2^{-k}\leq 1$, which implies that
    \begin{align*}
        \sum_{k\geq 0} 2^{k(s+1/2)}\|\dot\Delta_k \tilde a\|_{L^p}\|\widetilde{\dot\Delta}_k \tilde b\|_{L^2}
        &\leq C \sum_{k\geq 0} (2^{ks}\|\dot\Delta_k \tilde a\|_{L^p}) (2^{3k/2}\|\widetilde{\dot\Delta}_k \tilde b\|_{L^2})\\
        &\leq C \big(\sum_{k\geq 0} 2^{ks}\|\dot\Delta_k \tilde a\|_{L^p}\big) \big(\sum_{k\geq 0}2^{3k/2}\|\widetilde{\dot\Delta}_k \tilde b\|_{L^2}\big)\\
        &\leq C \|\tilde a\|_{\dot B^s_{p,1}}\|\tilde b\|_{\dot B^{3/2}_{2,1}}.
    \end{align*}
    Summing separately over \(k\leq0\) and \(k\geq1\) in
\eqref{Eq:HybridRemainderTransport}, we obtain
\begin{align*}
    \|R(\tilde a,\nabla \tilde b)\|_{\dot B^{s-2}_{p,1}}\leq C
\|\tilde a\|_{\dot B^s_{p,1}}\|\tilde b\|_{\mathcal X}.
\end{align*}
This proves \eqref{Eq:HybridTransport}.

We next prove \eqref{Eq:HybridViscosity}. Since $\A$ is a
second-order constant-coefficient operator, Bony's decomposition gives
\begin{align*}
    \tilde c\A \tilde b=T_{\tilde c}(\A \tilde b)+T_{\A \tilde b}\tilde c+R(\tilde c,\A \tilde b).
\end{align*}
Using \eqref{Eq:HybridLowHighLinfty} with $m=0$ and $m=2$,
we obtain
\begin{align*}
    \|T_{\tilde c}(\A \tilde b)\|_{\dot B^{s-2}_{p,1}}\leq C\|\tilde c\|_{\mathcal X}\|\tilde b\|_{\dot B^s_{p,1}},
\end{align*}
and
\begin{align*}
    \|T_{\A \tilde b}\tilde c\|_{\dot B^{s-2}_{p,1}}\leq C\|\tilde b\|_{\mathcal X}\|\tilde c\|_{\dot B^s_{p,1}}.
\end{align*}

For the remainder, the same argument as above yields
\begin{align*}
\|\dot\Delta_j\bigl(\dot\Delta_k \tilde c\,\A\widetilde{\dot\Delta}_k \tilde b\bigr)\|_{L^p}
\leq C
2^{3j/2}2^{2k}\|\dot\Delta_k \tilde c\|_{L^p}\|\widetilde{\dot\Delta}_k \tilde b\|_{L^2},
\end{align*}
and hence
\begin{align*}
\|R(\tilde c,\A \tilde b)\|_{\dot B^{s-2}_{p,1}}
&\leq C
\sum_{k\in\mathbb Z}
2^{k(s+\frac32)}
\|\dot\Delta_k\tilde c\|_{L^p}
\|\widetilde{\dot\Delta}_k\tilde b\|_{L^2}.
\end{align*}
If $k\leq0$, then $2^k\leq 1$ and thus
\begin{align*}
2^{k(s+\frac32)}\|\dot\Delta_k\tilde c\|_{L^p}\|\widetilde{\dot\Delta}_k\tilde b\|_{L^2}
=2^k\big(2^{ks}\|\dot\Delta_k\tilde c\|_{L^p}\big)\big(2^{k/2}\|\widetilde{\dot\Delta}_k \tilde b\|_{L^2}\big)
\leq C2^{ks}\|\dot\Delta_k\tilde c\|_{L^p}\|\tilde b\|_{\dot B^{1/2}_{2,\infty}}.
\end{align*}
If $k\geq1$, then
\begin{align*}
    2^{k(s+\frac32)}\|\dot\Delta_k\tilde c\|_{L^p}\|\widetilde{\dot\Delta}_k\tilde b\|_{L^2}
=\big(2^{ks}\|\dot\Delta_k\tilde c\|_{L^p}\big)\big(2^{3k/2}\|\widetilde{\dot\Delta}_k\tilde b\|_{L^2}\big).
\end{align*}
Therefore,
\begin{align*}
    \|R(\tilde c,\A \tilde b)\|_{\dot B^{s-2}_{p,1}}\lesssim\|\tilde c\|_{\dot B^s_{p,1}}\|\tilde b\|_{\mathcal X}.
\end{align*}
Combining the paraproduct and remainder estimates proves
\eqref{Eq:HybridViscosity}.
\end{proof}

\subsection{The stationary incompressible Navier--Stokes problem}

We conclude this section by identifying the stationary incompressible flow
that arises as the limiting profile in the stationary low Mach number
analysis. More precisely, we consider
\begin{equation}\label{SNS}
\left\{
\begin{aligned}
&\rho_\infty \operatorname{div}(u^*\otimes u^*)
 =\mu\Delta u^*-\nabla\Pi^*+\rho_\infty F(x),\\
&\operatorname{div}u^*=0.
\end{aligned}
\right.
\end{equation}
The following result provides the existence of the unique small stationary solution in
Besov--Sobolev spaces.

\begin{thm}\label{SNSthm}
Assume that \(k\geq3\). There exists a constant \(c_1>0\) such that,
if
\[
F\in\dot B^{-3/2}_{2,\infty}\cap\dot H^k
\qquad\text{and}\qquad
\|F\|_{\dot B^{-3/2}_{2,\infty}\cap\dot H^k}\leq c_1,
\]
then the problem \eqref{SNS} admits a unique solution
\[
u^*\in\dot B^{1/2}_{2,\infty}\cap\dot H^{k+2}
\]
satisfying
\begin{equation}\label{SNSest}
\|u^*\|_{\dot B^{1/2}_{2,\infty}\cap\dot H^{k+2}}
\leq
C\|F\|_{\dot B^{-3/2}_{2,\infty}\cap\dot H^k}.
\end{equation}
\end{thm}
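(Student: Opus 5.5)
We recast \eqref{SNS} as a fixed-point problem via the Stokes resolvent. Applying the Leray projector $\mathbb P$ eliminates the pressure. Using $\operatorname{div}u^*=0$, so that $\mathbb P\Delta u^*=\Delta u^*$, we obtain
\begin{align*}
u^*=\Phi(u^*):=\frac{\rho_\infty}{\mu}(-\Delta)^{-1}\mathbb P\bigl(F-\operatorname{div}(u^*\otimes u^*)\bigr).
\end{align*}
We work in $Y:=\dot B^{1/2}_{2,\infty}\cap\dot H^{k+2}$, restricted to divergence-free fields. The pressure is afterwards recovered as $\nabla\Pi^*=\mathbb Q(\rho_\infty F-\rho_\infty\operatorname{div}(u^*\otimes u^*))$. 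Since $\mathbb P$ is a zero-order Fourier multiplier, bounded on each dyadic block of $L^2$, and $(-\Delta)^{-1}$ gains exactly two derivatives in every homogeneous scale, the linear part satisfies
\begin{align*}
\|(-\Delta)^{-1}\mathbb P F\|_{Y}\le C\|F\|_{\dot B^{-3/2}_{2,\infty}\cap\dot H^k}.
\end{align*}
This is the homogeneous analogue of Proposition~\ref{Prop2.3}, and the factor $|\xi|^{-2}$ causes no difficulty because all norms involved are homogeneous.

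The core of the argument is the bilinear estimate
\begin{align*}
\|\operatorname{div}(u\otimes v)\|_{\dot B^{-3/2}_{2,\infty}\cap\dot H^{k}}\le C\|u\|_Y\|v\|_Y .
\end{align*}
For the low-regularity part we use $\|\operatorname{div}(u\otimes v)\|_{\dot B^{-3/2}_{2,\infty}}\lesssim\|u\otimes v\|_{\dot B^{-1/2}_{2,\infty}}$ and then the second product estimate in Proposition~\ref{BesovP}(vi) with $s_1=3/2$ and $s_2=1/2$. Since $s_1+s_2-\tfrac32=\tfrac12$, this is not quite the exponent we need. The correct choice is $s_1=s_2=1/2$, which gives $s_1+s_2-\tfrac32=-\tfrac12$, is admissible because both indices are $<3/2$ with positive sum, and uses $r_1=r_2=r=\infty$. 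The first estimate in (vi) then yields
\begin{align*}
\|u\otimes v\|_{\dot B^{-1/2}_{2,\infty}}\lesssim\|u\|_{\dot B^{1/2}_{2,\infty}}\|v\|_{\dot B^{1/2}_{2,\infty}}.
\end{align*}
Strictly, the case $1/r=1/r_1+1/r_2$ with all three equal to $\infty$ is included.

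For the high part, $\|\operatorname{div}(u\otimes v)\|_{\dot H^k}\le\|u\otimes v\|_{\dot H^{k+1}}$. By Lemma~\ref{LA.1} this is bounded by $\|u\|_{L^\infty}\|v\|_{\dot H^{k+1}}+\|v\|_{L^\infty}\|u\|_{\dot H^{k+1}}$. To close, we bound $\|u\|_{L^\infty}\lesssim\|u\|_{\dot B^{3/2}_{2,1}}\lesssim\|u\|_{\dot B^{1/2}_{2,\infty}}^{\theta}\|u\|_{\dot H^{k+2}}^{1-\theta}$ by interpolation (Proposition~\ref{BesovP}(iii) with $1/2<3/2<k+2$). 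We likewise bound $\|u\|_{\dot H^{k+1}}$ by interpolating between $\dot B^{1/2}_{2,\infty}$ and $\dot H^{k+2}$, since $1/2<k+1<k+2$. Hence both factors are controlled by $\|u\|_Y$.

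With these estimates, $\Phi$ maps the ball $\{\|u\|_Y\le 2C\|F\|\}$ into itself. It is a contraction there, since
\begin{align*}
\|\Phi(u)-\Phi(v)\|_Y\le C(\|u\|_Y+\|v\|_Y)\|u-v\|_Y,
\end{align*}
provided $c_1$ is small. Banach's fixed-point theorem then gives existence, uniqueness in the ball, and the bound \eqref{SNSest}.

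The main point requiring care is functional-analytic. Homogeneous spaces are defined modulo polynomials, so we must check that $Y$ (with $\dot B^{1/2}_{2,\infty}$ realized in $\mathcal S'_h$ with low-frequency convergence, since $1/2<3/2$) is complete. We must also check that the product $u\otimes v$ is a well-defined distribution and not just an element of $\mathcal S'_h$. Both follow from $Y\hookrightarrow L^3_{\mathrm{weak}}\cap L^\infty$, which makes the fixed point a genuine distributional solution with $u^*\to0$ at infinity. Uniqueness is understood in the class of small solutions in $Y$, which is the sense used later in the paper.
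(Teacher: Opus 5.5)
Your argument is correct, but the paper proceeds in two separate steps rather than by one contraction in $Y$. First, it imports existence and small-data uniqueness in $\dot B^{1/2}_{2,\infty}$ from Deguchi's Theorem~3.6. Your low-regularity bilinear bound alone, run as a contraction in $\dot B^{1/2}_{2,\infty}$, would reproduce this step.

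Second, it upgrades regularity a posteriori. Projecting the equation gives $\|u^*\|_{\dot H^{k+2}}\le C\|u^*\otimes u^*\|_{\dot H^{k+1}}+C\|F\|_{\dot H^k}$. The paper then uses the tame bound $\|u^*\otimes u^*\|_{\dot H^{k+1}}\lesssim\|u^*\|_{\dot B^{1/2}_{2,\infty}}\|u^*\|_{\dot H^{k+2}}$, which is linear in the top norm. This lets it absorb using only the smallness of $\|F\|_{\dot B^{-3/2}_{2,\infty}}$, and it yields the decoupled bound $\|u^*\|_{\dot H^{k+2}}\le C\|F\|_{\dot H^k}$. It also gives uniqueness among solutions that are small in $\dot B^{1/2}_{2,\infty}$ alone. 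The cost of that route is that the absorption presupposes $\|u^*\|_{\dot H^{k+2}}<\infty$, which the paper justifies only by invoking a Friedrichs regularization.

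Your single fixed point in $Y$ uses a cruder high-order product bound through $L^\infty$ and interpolation. It therefore needs smallness of the full norm of $F$, which the theorem grants. In exchange, it is self-contained and places the solution in $\dot H^{k+2}$ from the start, so no a priori finiteness issue arises.

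Your uniqueness is stated only in a small ball of $Y$. If you want the paper's larger class, note that your low-regularity estimate by itself gives, for any two solutions $u,v\in\dot B^{1/2}_{2,\infty}$,
\[
\|u-v\|_{\dot B^{1/2}_{2,\infty}}\le C\bigl(\|u\|_{\dot B^{1/2}_{2,\infty}}+\|v\|_{\dot B^{1/2}_{2,\infty}}\bigr)\|u-v\|_{\dot B^{1/2}_{2,\infty}}.
\]
Hence solutions that are small in $\dot B^{1/2}_{2,\infty}$ coincide.
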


\begin{proof}
By \cite[Theorem~3.6]{De-2025}, if
\(\|F\|_{\dot B^{-3/2}_{2,\infty}}\) is sufficiently small, then
the problem \eqref{SNS} admits a unique solution
\(u^*\in\dot B^{1/2}_{2,\infty}\) such that
\begin{equation}\label{SNSBesovEstimate}
\|u^*\|_{\dot B^{1/2}_{2,\infty}}
\leq
C\|F\|_{\dot B^{-3/2}_{2,\infty}}.
\end{equation}
It remains to establish the higher-order Sobolev estimate. A standard
Friedrichs regularization argument allows us to perform the following
calculation for smooth approximate solutions and then pass to the limit.

Since \(\mathbb Pu^*=u^*\), applying the Helmholtz projection
\(\mathbb P\) to the momentum equation in \eqref{SNS} gives
\[
\mu\Delta u^*
=
\rho_\infty\mathbb P\operatorname{div}(u^*\otimes u^*)
-\rho_\infty\mathbb PF.
\]
Consequently,
\begin{equation}\label{SNSHighEstimate}
\|u^*\|_{\dot H^{k+2}}
\leq
C\|u^*\otimes u^*\|_{\dot H^{k+1}}
+C\|F\|_{\dot H^k}.
\end{equation}
By the tame product estimate in Proposition~\ref{BesovP},
\[
\|u^*\otimes u^*\|_{\dot H^{k+1}}
\leq
C\|u^*\|_{\dot B^{1/2}_{2,\infty}}
  \|u^*\|_{\dot H^{k+2}}.
\]
Substituting this estimate into \eqref{SNSHighEstimate} and using
\eqref{SNSBesovEstimate}, we obtain
\begin{align}\label{2.12.1}
\|u^*\|_{\dot H^{k+2}}
\leq
C\|F\|_{\dot B^{-3/2}_{2,\infty}}
  \|u^*\|_{\dot H^{k+2}}
+C\|F\|_{\dot H^k}.
\end{align}
Choosing \(c_1>0\) sufficiently small, the first term on the right-hand
side of \eqref{2.12.1} can be absorbed by the term on the left-hand side. Hence,
\[
\|u^*\|_{\dot H^{k+2}}
\leq C\|F\|_{\dot H^k}.
\]
Combining this inequality with \eqref{SNSBesovEstimate} yields
\eqref{SNSest}. The uniqueness in
\(\dot B^{1/2}_{2,\infty}\cap\dot H^{k+2}\) follows directly from the
uniqueness in \(\dot B^{1/2}_{2,\infty}\).
\end{proof}

Theorem~\ref{SNSthm} determines the stationary incompressible profile
\(u^*\) appearing in Theorem~\ref{LMSNSKThm-intro}. In
Section~\ref{Sec:SNSK}, we construct the corresponding stationary NSK
solution \((\rho_\eps^*,u_\eps^*)\) uniformly with respect to the Mach
number and establish its second-order convergence toward
\((\rho_\infty,u^*)\).

\section{ The stationary NSK equations}
\label{Sec:SNSK}

This section is devoted to the proof of Theorem~\ref{LMSNSKThm-intro}. We first reformulate the stationary NSK equations as an elliptic
fixed-point problem and establish the nonlinear estimates required for
the contraction argument.
We then construct the stationary solution
uniformly with respect to $\epsilon$ and prove its second-order convergence toward the stationary incompressible flow.

\subsection{Reformulation and nonlinear estimates}
Let
\(\rho_\epsilon^*=\rho_\infty+\epsilon^2\sigma_\epsilon^*\). Substituting
this expression into \eqref{LMSNKS-intro}, we obtain
\begin{equation}\label{PLMSNKS}
\left\{
\begin{aligned}
&\rho_\infty\operatorname{div}u_\epsilon^*
 +\epsilon^2\operatorname{div}(\sigma_\epsilon^* u_\epsilon^*)=0,\\
&p'(\rho_\infty)\nabla\sigma_\epsilon^*
 -\A u_\epsilon^*
 -\kappa\rho_\infty\nabla\Delta\sigma_\epsilon^*
 =g_\epsilon(\sigma_\epsilon^*,u_\epsilon^*),
\end{aligned}
\right.
\end{equation}
where
\begin{align*}
g_\epsilon(\sigma_\epsilon^*,u_\epsilon^*)
=&\,-\operatorname{div}\big(
(\rho_\infty+\epsilon^2\sigma_\epsilon^*)
u_\epsilon^*\otimes u_\epsilon^*
\big) -\big(
p'(\rho_\infty+\epsilon^2\sigma_\epsilon^*)
-p'(\rho_\infty)
\big)\nabla\sigma_\epsilon^*\\
&\,+\kappa\epsilon^2\sigma_\epsilon^*
\nabla\Delta\sigma_\epsilon^*
+(\rho_\infty+\epsilon^2\sigma_\epsilon^*)F.
\end{align*}

Applying the operator \(\operatorname{div}\) to the momentum equation in
\eqref{PLMSNKS}, we have
\[
p'(\rho_\infty)\Delta\sigma_\epsilon^*
-(\mu+\nu)\Delta\operatorname{div}u_\epsilon^*
-\kappa\rho_\infty\Delta^2\sigma_\epsilon^*
=
\operatorname{div}g_\epsilon(\sigma_\epsilon^*,u_\epsilon^*).
\]
which, together with the density equation in \eqref{PLMSNKS}, yields
\begin{align*}
p'(\rho_\infty)\sigma_\epsilon^*
-\kappa\rho_\infty\Delta\sigma_\epsilon^*
&=
\Delta^{-1}\operatorname{div}
g_\epsilon(\sigma_\epsilon^*,u_\epsilon^*)
+(\mu+\nu)\operatorname{div}u_\epsilon^*\\
&=
\Delta^{-1}\operatorname{div}
g_\epsilon(\sigma_\epsilon^*,u_\epsilon^*)
-\epsilon^2\frac{\mu+\nu}{\rho_\infty}
\operatorname{div}(\sigma_\epsilon^* u_\epsilon^*).
\end{align*}
On the other hand, applying the Helmholtz projection \(\mathbb P\) to the momentum equation gives
\[
-\mu\Delta\mathbb Pu_\epsilon^*
=
\mathbb Pg_\epsilon(\sigma_\epsilon^*,u_\epsilon^*).
\]

We take the elliptic operator $\mathbf B$, defined in \eqref{2.1.1}, as the following specific elliptic operator:
\begin{align}\label{NJKG3.2}
\mathbf B=p'(\rho_\infty)-\kappa\rho_\infty\Delta. 
\end{align}
Therefore, the preceding identities are equivalent to
\begin{equation}\label{siguepseq}
\left\{
\begin{aligned}
\sigma_\epsilon^*
=&\,\mathbf B^{-1}\left\{
\Delta^{-1}\operatorname{div}
g_\epsilon(\sigma_\epsilon^*,u_\epsilon^*)
-\epsilon^2\frac{\mu+\nu}{\rho_\infty}
\operatorname{div}(\sigma_\epsilon^* u_\epsilon^*)
\right\},\\
u_\epsilon^*
=&\,-\frac1\mu\Delta^{-1}\mathbb P
g_\epsilon(\sigma_\epsilon^*,u_\epsilon^*)
-\frac{\epsilon^2}{\rho_\infty}
\nabla\Delta^{-1}
\operatorname{div}(\sigma_\epsilon^* u_\epsilon^*).
\end{aligned}
\right.
\end{equation}

To solve \eqref{siguepseq}, we introduce the mapping
\begin{align*}
    \mathcal J_\epsilon: (\sigma,u) \longmapsto (\bar\sigma,\bar u)
\end{align*}
where
\begin{align*}
\bar\sigma
:=&\,
\mathbf B^{-1}\left\{
\Delta^{-1}\operatorname{div}g_\epsilon(\sigma,u)
-\epsilon^2\frac{\mu+\nu}{\rho_\infty}
\operatorname{div}(\sigma u)
\right\},\\
\bar u
:=&\,
-\frac1\mu\Delta^{-1}\mathbb P g_\epsilon(\sigma,u)
-\frac{\epsilon^2}{\rho_\infty}
\nabla\Delta^{-1}\operatorname{div}(\sigma u).
\end{align*}

We work in the space
\[
X:=
\left(
\dot B^{-1/2}_{2,\infty}\cap\dot H^{k+3}
\right)
\times
\left(
\dot B^{1/2}_{2,\infty}\cap\dot H^{k+2}
\right)^3,
\]
endowed with the norm
\[
\|(\sigma,u)\|_X
:=
\|\sigma\|_{\dot B^{-1/2}_{2,\infty}\cap\dot H^{k+3}}
+
\|u\|_{\dot B^{1/2}_{2,\infty}\cap\dot H^{k+2}}.
\]
For \(\delta>0\), define the closed ball
\[
X_\delta
:=
\left\{
(\sigma,u)\in X:
\|(\sigma,u)\|_X\leq\delta
\right\}.
\]
To apply the contraction mapping theorem, it remains to control the
two types of nonlinear expressions that appear in \(\mathcal J_\epsilon\). The
first estimate concerns the momentum source
\(g_\epsilon(\sigma,u)\), whereas the second controls the correction
\(\operatorname{div}(\sigma u)\) generated by the stationary density equation. Both estimates are uniform for \(0<\epsilon\leq1\).

\begin{lem}\label{gepslem}
Assume that \(0<\delta<1\). 
There exists a constant $C>0$,  independent of \(\epsilon\in(0,1]\) and \(\delta\), such that
for any \((\sigma,u)\in X_\delta\)
\begin{equation}\label{gepsest}
\|g_\epsilon(\sigma,u)\|_{\dot B_{2,\infty}^{-\frac32}\cap\dot H^k}
\leq
C\Big(
\|(\sigma,u)\|_X^2
+\|F\|_{\dot B_{2,\infty}^{-\frac32}\cap\dot H^k}
\Big),
\end{equation}
and for any
\((\sigma_1,u_1),(\sigma_2,u_2)\in X_\delta\)
\begin{equation}\label{geps12est}
\begin{aligned}
\|g_\epsilon(\sigma_1,u_1)-g_\epsilon(\sigma_2,u_2)\|
_{\dot B_{2,\infty}^{-\frac32}\cap\dot H^k}
 \leq
C\Big(
\delta+\|F\|_{\dot B_{2,\infty}^{-\frac32}\cap\dot H^k}
\Big)
\|(\sigma_1-\sigma_2,u_1-u_2)\|_X.
\end{aligned}
\end{equation}
\end{lem}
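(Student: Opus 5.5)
We estimate the four pieces of $g_\epsilon(\sigma,u)$ separately in the low-frequency norm $\dot B^{-3/2}_{2,\infty}$ and in the high-order norm $\dot H^k$. The pieces are the convective term $-\operatorname{div}((\rho_\infty+\epsilon^2\sigma)u\otimes u)$, the pressure correction $-(p'(\rho_\infty+\epsilon^2\sigma)-p'(\rho_\infty))\nabla\sigma$, the capillary correction $\kappa\epsilon^2\sigma\nabla\Delta\sigma$, and the forcing $(\rho_\infty+\epsilon^2\sigma)F$. Every factor $\epsilon^2$ is bounded by $1$, so uniformity in $\epsilon$ is automatic.

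The basic embeddings come first. By Proposition~\ref{BesovP}(iii) we interpolate between $\dot B^{1/2}_{2,\infty}$ and $\dot H^{k+2}$ (with $k\ge3$). This gives control of $u$ in $\dot B^{s}_{2,1}$ for every $1/2<s<k+2$, in particular in $\dot B^{3/2}_{2,1}\hookrightarrow L^\infty$, and of $\nabla u$ in $L^\infty$. In the same way, $\sigma\in\dot B^{-1/2}_{2,\infty}\cap\dot H^{k+3}$ controls $\sigma$ in $\dot B^{s}_{2,1}$ for $-1/2<s<k+3$, hence $\sigma,\nabla\sigma,\nabla^3\sigma\in L^\infty$ with norm $\lesssim\delta$.

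\textbf{Low-frequency bounds.} For the convective term we write $\|\operatorname{div}(u\otimes u)\|_{\dot B^{-3/2}_{2,\infty}}\lesssim\|u\otimes u\|_{\dot B^{-1/2}_{2,\infty}}$. By the second product law in Proposition~\ref{BesovP}(vi) with $s_1=3/2$, $s_2=1/2$, this is at most $\|u\|_{\dot B^{3/2}_{2,1}}\|u\|_{\dot B^{1/2}_{2,\infty}}\lesssim\|u\|_X^2$. The cubic term $\epsilon^2\sigma u\otimes u$ is treated the same way, after bounding $\sigma u$ in $\dot B^{3/2}_{2,1}$.

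For the pressure term we write $p'(\rho_\infty+\epsilon^2\sigma)-p'(\rho_\infty)=\epsilon^2\sigma\,\Phi_1(\epsilon^2\sigma)$ with $\Phi_1$ smooth. Proposition~\ref{BesovP}(v) bounds $\Phi_1(\epsilon^2\sigma)-\Phi_1(0)$. The product $\sigma\nabla\sigma$ is placed in $\dot B^{-3/2}_{2,\infty}$ via (vi) with $s_1=1/2$, $s_2=-1/2$ (sum $0$): we take $\sigma\in\dot B^{1/2}_{2,1}$ and $\nabla\sigma\in\dot B^{-1/2}_{2,\infty}$, where the latter is controlled by interpolation. The capillary term $\sigma\nabla\Delta\sigma$ is handled identically. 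The forcing gives $\|F\|+\epsilon^2\|\sigma F\|$, and we bound $\|\sigma F\|_{\dot B^{-3/2}_{2,\infty}}\lesssim\|\sigma\|_{\dot B^{3/2}_{2,1}}\|F\|_{\dot B^{-3/2}_{2,\infty}}$ by (vi) with $s_1=3/2$, $s_2=-3/2$. This gives $\lesssim\delta\|F\|\le\|F\|$, since $\delta<1$.

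\textbf{High-order bounds.} For the $\dot H^k$ norms we use the first (tame) estimate of Lemma~\ref{LA.1} with $L^\infty$ and $L^2$ pairings. For example, $\|\nabla^{k+1}(u\otimes u)\|_{L^2}\lesssim\|u\|_{L^\infty}\|u\|_{\dot H^{k+1}}$, and $\dot H^{k+1}$ lies between the two endpoint norms. Similarly $\|\nabla^k(\sigma\nabla^3\sigma)\|_{L^2}\lesssim\|\sigma\|_{L^\infty}\|\sigma\|_{\dot H^{k+3}}+\|\nabla^3\sigma\|_{L^\infty}\|\sigma\|_{\dot H^k}$. The composition $\Phi_1(\epsilon^2\sigma)$ is handled by standard Moser estimates in $\dot H^m$ for bounded $\sigma$. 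The term $\sigma F$ is bounded by $\|\sigma\|_{L^\infty}\|F\|_{\dot H^k}+\|F\|_{L^p}\|\nabla^k\sigma\|_{L^q}$. Here we need $F$ in some $L^p$, which follows by interpolating $\dot B^{-3/2}_{2,\infty}$ with $\dot H^k$ to get $\dot B^{3(1/2-1/p)}_{2,1}\hookrightarrow L^p$. This proves \eqref{gepsest}.

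\textbf{Lipschitz bound and main obstacle.} For \eqref{geps12est} we write each difference as a telescoping sum, e.g.\ $u_1\otimes u_1-u_2\otimes u_2=(u_1-u_2)\otimes u_1+u_2\otimes(u_1-u_2)$. The same product laws give $\delta\|(\sigma_1-\sigma_2,u_1-u_2)\|_X$. For the forcing difference $\epsilon^2(\sigma_1-\sigma_2)F$ the constant is $\|F\|$, and the composition difference is handled by Proposition~\ref{BesovP}(v). The main obstacle is the bookkeeping at the endpoints of the product laws in $\dot B^{-3/2}_{2,\infty}$ (regularity sum $0$, weak third index), especially for $\sigma F$ and $\sigma\nabla\sigma$. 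If (vi) fails at the endpoint, the fallback is Proposition~\ref{BesovP}(vii), choosing $\alpha,\beta$ so that one factor is taken in $\dot B^{-1/2}_{2,\infty}$ and the other in a high positive index available from $\dot H^{k+3}$.
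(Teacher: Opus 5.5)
Your plan is the paper's proof: the same four-way split of $g_\epsilon$, Proposition~\ref{BesovP}(iii), (v), (vi) for the $\dot B^{-3/2}_{2,\infty}$ part, tame product and Moser estimates for $\dot H^k$, and telescoping for \eqref{geps12est}. One step is wrong as written, though it is easy to repair.

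\textbf{The convective term.} In the second product law of (vi), taking $s_1=3/2$, $s_2=1/2$ lands in $\dot B^{s_1+s_2-3/2}_{2,\infty}=\dot B^{1/2}_{2,\infty}$, not in $\dot B^{-1/2}_{2,\infty}$. In fact the inequality $\|u\otimes u\|_{\dot B^{-1/2}_{2,\infty}}\lesssim\|u\|_{\dot B^{3/2}_{2,1}}\|u\|_{\dot B^{1/2}_{2,\infty}}$ cannot hold, because it is not scale-invariant. Under $u\mapsto u(\lambda\cdot)$ the left side scales like $\lambda^{-2}$ and the right side like $\lambda^{-1}$, so letting $\lambda\to0$ breaks it. Your treatment of the cubic term has the same defect: you put $\sigma u$ in $\dot B^{3/2}_{2,1}$ and pair it with $u\in\dot B^{1/2}_{2,\infty}$.

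\textbf{The fix.} Use what the paper uses. Take the first law of (vi) with $s_1=s_2=1/2$ and $r_1=r_2=\infty$, which gives $\|u\otimes u\|_{\dot B^{-1/2}_{2,\infty}}\lesssim\|u\|_{\dot B^{1/2}_{2,\infty}}^2$. For the cubic term, then use $\|\sigma\, u\otimes u\|_{\dot B^{-1/2}_{2,\infty}}\lesssim\|\sigma\|_{\dot B^{3/2}_{2,1}}\|u\otimes u\|_{\dot B^{-1/2}_{2,\infty}}$, which is the second law with $s_1=3/2$, $s_2=-1/2$. The corresponding differences in the Lipschitz estimate need the same correction.

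\textbf{The other pieces.} Your index choices for $\sigma\nabla\sigma$, $\sigma\nabla\Delta\sigma$ and $\sigma F$ are consistent. They differ from the paper's only in which factor carries the $\ell^1$ summability index; for instance, you use $\sigma\in\dot B^{1/2}_{2,1}$ with $\nabla\sigma\in\dot B^{-1/2}_{2,\infty}$, whereas the paper puts the composition in $\dot B^{-1/2}_{2,\infty}$ and $\nabla\sigma$ in $\dot B^{1/2}_{2,1}$. Your explicit $\dot H^k$ bound for $\sigma F$, using $F\in L^p$ by interpolation, is a more careful version of the paper's one-line bound. Finally, the second law in (vi) already covers the endpoint $s_1+s_2=0$ when one factor is in a $\dot B^{\cdot}_{2,1}$ space, so the fallback to (vii) is unnecessary.
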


\begin{proof}
Recall that
\begin{align*}
g_\epsilon(\sigma,u)
&=-\operatorname{div}
\bigl((\rho_\infty+\epsilon^2\sigma)u\otimes u\bigr)
-\bigl(p'(\rho_\infty+\epsilon^2\sigma)-p'(\rho_\infty)\bigr)
\nabla\sigma
\\
&\quad\,+\kappa\epsilon^2\sigma\nabla\Delta\sigma
+(\rho_\infty+\epsilon^2\sigma)F\nonumber\\
&=: g_1+g_2+g_3+g_4.
\end{align*}
We estimate the terms \(g_i\) with \(1\leq i\leq4\) in turn. For the term \(g_1\),
Proposition~\ref{BesovP} (vi) gives
\begin{align*}
 \|u\otimes u\|_{\dot B_{2,\infty}^{-\frac12}}
\lesssim
\|u\|_{\dot B_{2,\infty}^{\frac12}}^2.   
\end{align*}
Moreover, Proposition~\ref{BesovP} (iii), (vi) provide
\begin{align*}
\|\sigma u\otimes u\|_{\dot B_{2,\infty}^{-\frac12}}
\lesssim
\|\sigma\|_{\dot B_{2,1}^{\frac32}}
\|u\otimes u\|_{\dot B_{2,\infty}^{-\frac12}}
\lesssim
\|\sigma\|_{\dot B_{2,1}^{\frac32}}
\|u\|_{\dot B_{2,\infty}^{\frac12}}^2
\lesssim
\delta\|(\sigma,u)\|_X^2.
\end{align*}
For its Sobolev norm, the standard product estimate yields
\begin{align*}
\|(\rho_\infty+\epsilon^2\sigma)u\otimes u\|_{\dot H^{k+1}}
\lesssim 
(1+\|\sigma\|_{L^\infty})
\|u\|_{L^\infty}\|u\|_{\dot H^{k+1}}
 +\|\sigma\|_{\dot H^{k+1}}\|u\|_{L^\infty}^2
\lesssim
\|(\sigma,u)\|_X^2.
\end{align*}
Consequently, Proposition~\ref{BesovP} (i) gives
\begin{equation}\label{Est:g1}
\|g_1\|_{\dot B_{2,\infty}^{-\frac32}\cap\dot H^k}
\lesssim
\|(\sigma,u)\|_X^2.
\end{equation}

For the term \(g_2\), Proposition~\ref{BesovP} (i), (iii), (v), (vi) imply
\begin{align*}
\|
\bigl(p'(\rho_\infty+\epsilon^2\sigma)-p'(\rho_\infty)\bigr)
\nabla\sigma
\|_{\dot B_{2,\infty}^{-\frac32}}
\lesssim&\, 
\|p'(\rho_\infty+\epsilon^2\sigma)-p'(\rho_\infty)\|
_{\dot B_{2,\infty}^{-\frac12}}
\|\nabla\sigma\|_{\dot B_{2,1}^{\frac12}}
\\
\lesssim&\,
\bigl(1+\|\sigma\|_{\dot B_{2,1}^{\frac32}}\bigr)
\|\sigma\|_{\dot B_{2,\infty}^{-\frac12}}
\|\sigma\|_{\dot B_{2,1}^{\frac32}}
\nonumber\\
\lesssim&\,
\|(\sigma,u)\|_X^2.
\end{align*}
In addition, it is easy to obtain that
\begin{align*}
\|&
\bigl(p'(\rho_\infty+\epsilon^2\sigma)-p'(\rho_\infty)\bigr)
\nabla\sigma
\|_{\dot H^k}
 \lesssim
\|\sigma\|_{L^\infty}\|\sigma\|_{\dot H^{k+1}}
+\|\nabla\sigma\|_{L^\infty}\|\sigma\|_{\dot H^k}
\lesssim
\|(\sigma,u)\|_X^2.
\end{align*}
Therefore, we have
\begin{equation}\label{Est:g2}
\|g_2\|_{\dot B_{2,\infty}^{-\frac32}\cap\dot H^k}
\lesssim
\|(\sigma,u)\|_X^2.
\end{equation}

For  the term  \(g_3\), Proposition~\ref{BesovP} (i), (iii), (vi) and the Sobolev product estimate
give
\begin{equation}\label{Est:g3}
\begin{aligned}
\|\sigma\nabla\Delta\sigma\|
_{\dot B_{2,\infty}^{-\frac32}\cap\dot H^k}
\lesssim&\,
\|\sigma\|_{\dot B_{2,\infty}^{-\frac12}}
\|\nabla\Delta\sigma\|_{\dot B_{2,1}^{\frac12}}
+\|\sigma\|_{L^\infty}\|\sigma\|_{\dot H^{k+3}}
+\|\nabla\Delta\sigma\|_{L^\infty}\|\sigma\|_{\dot H^k}
\\
\lesssim &\,
\|(\sigma,u)\|_X^2.
\end{aligned}
\end{equation}
Finally,
for the term \(g_4\), Proposition~\ref{BesovP} (iii), (vi) yield
\begin{align}\label{Est:g4}
\|(\rho_\infty+\epsilon^2\sigma)F\|
_{\dot B_{2,\infty}^{-\frac32}\cap\dot H^k}
\lesssim&\,
\|F\|_{\dot B_{2,\infty}^{-\frac32}\cap\dot H^k}
+\|\sigma\|_{\dot B_{2,1}^{\frac32}\cap\dot H^{k+1}}
\|F\|_{\dot B_{2,\infty}^{-\frac32}\cap\dot H^k}
\lesssim\,
\|F\|_{\dot B_{2,\infty}^{-\frac32}\cap\dot H^k}.
\end{align}
Then, from \eqref{Est:g1}--\eqref{Est:g4}, we get
\eqref{gepsest}. Applying Proposition~\ref{BesovP} (i), (iii), (v), (vi)
to the difference
\begin{align*}
g_\epsilon(\sigma_1,u_1)-g_\epsilon(\sigma_2,u_2)    
\end{align*}
yields \eqref{geps12est}. Hence, the proof of Lemma \ref{gepslem} is complete.
\end{proof}

The following lemma provides the corresponding estimate for the nonlinear term arising from the density equation.

\begin{lem}\label{sigulem}
Assume that \(0<\delta<1\). There exists a constant $C>0$,  independent of \(\epsilon\in(0,1]\) and \(\delta\), such that for  any \((\sigma,u)\in X_\delta\)
\begin{equation}\label{siguest}
\|\operatorname{div}(\sigma u)\|_{\dot B_{2,\infty}^{-\frac12}\cap\dot H^{k+1}}
\leq C\|(\sigma,u)\|_X^2,
\end{equation}
and for any \((\sigma_1,u_1),(\sigma_2,u_2)\in X_\delta\)
\begin{align}\label{sigu12est}
 \|\operatorname{div}(\sigma_1u_1)-\operatorname{div}(\sigma_2u_2)\|
_{\dot B_{2,\infty}^{-\frac12}\cap\dot H^{k+1}}
 \leq
C\delta\|(\sigma_1-\sigma_2,u_1-u_2)\|_X.
\end{align}
\end{lem}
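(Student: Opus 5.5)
We estimate the two components of the norm in \eqref{siguest} separately. For the low-frequency Besov part, Proposition~\ref{BesovP}~(i) gives
\[
\|\operatorname{div}(\sigma u)\|_{\dot B^{-1/2}_{2,\infty}}\lesssim\|\sigma u\|_{\dot B^{1/2}_{2,\infty}},
\]
so it suffices to bound the product $\sigma u$ in $\dot B^{1/2}_{2,\infty}$. We apply the second product inequality in Proposition~\ref{BesovP}~(vi) with $s_1=3/2$ and $s_2=1/2$. This is admissible since $s_1\le 3/2$, $s_2<3/2$ and $s_1+s_2\ge0$, and it yields
\[
\|\sigma u\|_{\dot B^{1/2}_{2,\infty}}\lesssim\|\sigma\|_{\dot B^{3/2}_{2,1}}\|u\|_{\dot B^{1/2}_{2,\infty}}.
\]
It remains to check that $\|\sigma\|_{\dot B^{3/2}_{2,1}}\lesssim\|\sigma\|_{\dot B^{-1/2}_{2,\infty}\cap\dot H^{k+3}}$. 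This follows from the interpolation inequality in Proposition~\ref{BesovP}~(iii) between $s_1=-1/2$ and $s_2=k+3$, using $\dot H^{k+3}=\dot B^{k+3}_{2,2}\hookrightarrow\dot B^{k+3}_{2,\infty}$. Since $-1/2<3/2<k+3$, both endpoints are controlled by $\|(\sigma,u)\|_X$.

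For the high Sobolev part we write $\|\operatorname{div}(\sigma u)\|_{\dot H^{k+1}}\lesssim\|\sigma u\|_{\dot H^{k+2}}$ and apply the first inequality of Lemma~\ref{LA.1} with $r=2$:
\[
\|\nabla^{k+2}(\sigma u)\|_{L^2}\lesssim\|\sigma\|_{L^\infty}\|u\|_{\dot H^{k+2}}+\|u\|_{L^\infty}\|\sigma\|_{\dot H^{k+2}}.
\]
We then need the $L^\infty$ bounds through $\dot B^{3/2}_{2,1}\hookrightarrow L^\infty$. For $\sigma$ the $\dot B^{3/2}_{2,1}$ bound was obtained above. For $u$ we interpolate between $\dot B^{1/2}_{2,\infty}$ and $\dot B^{k+2}_{2,\infty}$; this is valid because $k\ge3$ gives $1/2<3/2<k+2$. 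To bound $\|\sigma\|_{\dot H^{k+2}}$ we again interpolate, now between $\dot B^{-1/2}_{2,\infty}$ and $\dot H^{k+3}$, and obtain the $\ell^2$ sum by the standard interpolation argument (a strictly interior index gives an $\ell^1$, hence $\ell^2$, bound). Collecting these estimates gives \eqref{siguest}.

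For the difference estimate \eqref{sigu12est} we use bilinearity:
\[
\sigma_1u_1-\sigma_2u_2=(\sigma_1-\sigma_2)u_1+\sigma_2(u_1-u_2).
\]
We apply the same two estimates to each product. In every term one factor is a difference and the other factor lies in $X_\delta$, so its norm is at most $\delta$; this gives the factor $C\delta$.

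The argument is essentially routine. The only point that needs care is justifying the intermediate norms $\dot B^{3/2}_{2,1}$, $L^\infty$ and $\dot H^{k+2}$ from the mixed norm on $X$. This is done with the homogeneous interpolation in Proposition~\ref{BesovP}~(iii), and it is where the condition $k\ge3$ and the specific regularity indices enter.
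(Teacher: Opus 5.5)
Your proposal is correct and is essentially the paper's proof: Proposition~\ref{BesovP}~(i) and (vi) give $\|\sigma u\|_{\dot B^{1/2}_{2,\infty}}\lesssim\|\sigma\|_{\dot B^{3/2}_{2,1}}\|u\|_{\dot B^{1/2}_{2,\infty}}$, a tame product estimate with $L^\infty$ factors handles the $\dot H^{k+2}$ part, interpolation via Proposition~\ref{BesovP}~(iii) reaches the intermediate norms, and the bilinear splitting handles the difference. You spell out the interpolation steps that the paper leaves implicit; these only need $k+2>3/2$, not the full strength of $k\geq 3$.
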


\begin{proof}
By Proposition~\ref{BesovP} (i), (iii), (vi), we have
\begin{align*}
\|\operatorname{div}(\sigma u)\|_{\dot B_{2,\infty}^{-\frac12}\cap\dot H^{k+1}}
&\lesssim
\|\sigma u\|_{\dot B_{2,\infty}^{\frac12}\cap\dot H^{k+2}}
\\
&\lesssim
\|\sigma\|_{\dot B_{2,1}^{\frac32}}
\|u\|_{\dot B_{2,\infty}^{\frac12}}
+\|\sigma\|_{L^\infty}\|u\|_{\dot H^{k+2}}
+\|u\|_{L^\infty}\|\sigma\|_{\dot H^{k+2}}
\\
&\lesssim
\|(\sigma,u)\|_X^2,
\end{align*}
which implies \eqref{siguest}. Furthermore, using the identity
\begin{align*}
\sigma_1u_1-\sigma_2u_2
=
\sigma_1(u_1-u_2)+(\sigma_1-\sigma_2)u_2.
\end{align*}
and applying the preceding estimate to the two terms on the right-hand side, together with the bounds \(\|(\sigma_i,u_i)\|_X\leq\delta\) with \(i=1,2\),  we obtain
\eqref{sigu12est}.
\end{proof}

\subsection{Proof of Theorem \ref{LMSNSKThm-intro}}

The preceding two lemmas provide the mapping and Lipschitz estimates needed for the fixed-point argument. We now prove
Theorem~\ref{LMSNSKThm-intro}. 
The proof consists of two steps: we first construct the stationary
NSK solution uniformly in \(\epsilon\), and then compare its Helmholtz components with the stationary incompressible solution given by Theorem~\ref{SNSthm}.

\begin{proof}
We first construct the stationary NSK solution. For
\((\sigma,u)\in X_\delta\), we write
\[
(\bar\sigma,\bar u)
=
\mathcal J_\epsilon(\sigma,u).
\]
By the definition of \(\mathcal J_\epsilon\), Proposition~\ref{BesovP} (i), Proposition~\ref{Prop2.3}, and Lemmas~\ref{gepslem}--\ref{sigulem},
we have, uniformly for \(0<\epsilon\leq1\),
\begin{align}
\|\bar\sigma\|_{\dot B_{2,\infty}^{-\frac12}}
&\lesssim
\|g_\epsilon(\sigma,u)\|_{\dot B_{2,\infty}^{-\frac32}}
+\epsilon^2
\|\operatorname{div}(\sigma u)\|_{\dot B_{2,\infty}^{-\frac12}}
\lesssim
\delta^2+\|F\|_{\dot B_{2,\infty}^{-\frac32}\cap\dot H^k},\label{sigepslest}\\
\|\bar\sigma\|_{\dot H^{k+3}}
&\lesssim
\|g_\epsilon(\sigma,u)\|_{\dot H^k}
+\epsilon^2
\|\operatorname{div}(\sigma u)\|_{\dot H^{k+1}}
\lesssim
\delta^2+\|F\|_{\dot B_{2,\infty}^{-\frac32}\cap\dot H^k},\label{sigepshest}\\
\|\bar u\|_{\dot B_{2,\infty}^{\frac12}}
&\lesssim
\|g_\epsilon(\sigma,u)\|_{\dot B_{2,\infty}^{-\frac32}}
+\epsilon^2
\|\operatorname{div}(\sigma u)\|_{\dot B_{2,\infty}^{-\frac12}}
\lesssim
\delta^2+\|F\|_{\dot B_{2,\infty}^{-\frac32}\cap\dot H^k},\label{uepslest}
\end{align}
and
\begin{align}
\|\bar u\|_{\dot H^{k+2}}
&\lesssim
\|g_\epsilon(\sigma,u)\|_{\dot H^k}
+\epsilon^2
\|\operatorname{div}(\sigma u)\|_{\dot H^{k+1}}
\lesssim
\delta^2+\|F\|_{\dot B_{2,\infty}^{-\frac32}\cap\dot H^k}.\label{uepshest}
\end{align}
Consequently, it follows from \eqref{sigepslest}--\eqref{uepshest} that
\begin{equation}\label{Eq:JmapsBall}
\|\mathcal J_\epsilon(\sigma,u)\|_X
\leq C_0(\delta^2+\delta_0).
\end{equation}

We next verify that the mapping $\mathcal{J}_\epsilon$ is contractive. Let
\[
(\bar\sigma_i,\bar u_i)
=
\mathcal J_\epsilon(\sigma_i,u_i),
\qquad
(\sigma_i,u_i)\in X_\delta,
\qquad i=1,2.
\]
Using Proposition~\ref{Prop2.3} and
Lemmas~\ref{gepslem}--\ref{sigulem}, we obtain that
\begin{align}\label{Contractionest1}
\|\bar\sigma_1-\bar\sigma_2\|
_{\dot B_{2,\infty}^{-\frac12}\cap\dot H^{k+3}}
\lesssim&\,
\|g_\epsilon(\sigma_1,u_1)-g_\epsilon(\sigma_2,u_2)\|
_{\dot B_{2,\infty}^{-\frac32}\cap\dot H^k}\nonumber\\
&+\epsilon^2
\|\operatorname{div}(\sigma_1u_1)
-\operatorname{div}(\sigma_2u_2)\|
_{\dot B_{2,\infty}^{-\frac12}\cap\dot H^{k+1}}
\nonumber\\
\leq&\,C_1
\left(\delta+\|F\|_{\dot B_{2,\infty}^{-\frac32}\cap\dot H^k}\right)
\|(\sigma_1-\sigma_2,u_1-u_2)\|_X,
\end{align}
and
\begin{align}\label{Contractionest2}
\|\bar u_1-\bar u_2\|
_{\dot B_{2,\infty}^{\frac12}\cap\dot H^{k+2}}
 \lesssim&\,
\|g_\epsilon(\sigma_1,u_1)-g_\epsilon(\sigma_2,u_2)\|
_{\dot B_{2,\infty}^{-\frac32}\cap\dot H^k}\nonumber\\
&+\epsilon^2
\|\operatorname{div}(\sigma_1u_1)
-\operatorname{div}(\sigma_2u_2)\|
_{\dot B_{2,\infty}^{-\frac12}\cap\dot H^{k+1}}
\nonumber\\
\leq&\,
C_1\left(\delta+\|F\|_{\dot B_{2,\infty}^{-\frac32}\cap\dot H^k}\right)
\|(\sigma_1-\sigma_2,u_1-u_2)\|_X.
\end{align}
 
Choose \(\delta_0>0\) and \(\delta>0\) such that
\[
C_0(\delta^2+\delta_0)\leq\delta,
\qquad
C_1(\delta+\delta_0)\leq\frac14,
\]
Then \(\mathcal J_\epsilon\) maps \(X_\delta\) into itself and is a contraction on \(X_\delta\), uniformly for \(0<\epsilon\leq1\).
Thus,  based on the contraction mapping theorem, there is a fixed-point
$
(\sigma_\epsilon^*,u_\epsilon^*)\in X_\delta
$
for \(\mathcal J_\epsilon\), which, together with \eqref{siguepseq}, implies that this fixed point solves
the stationary perturbation system \eqref{PLMSNKS}.
The fixed-point construction guarantees the uniqueness of $(\sigma_\epsilon^*,u_\epsilon^*)$ in \(X_\delta\).

To obtain the estimate stated in the theorem, we apply
\eqref{gepsest} and \eqref{siguest} once more to the fixed point and obtain
\[
\|(\sigma_\epsilon^*,u_\epsilon^*)\|_X
\leq
C\left(
\|(\sigma_\epsilon^*,u_\epsilon^*)\|_X^2+\|F\|_{\dot B_{2,\infty}^{-\frac32}\cap\dot H^k}
\right).
\]
By reducing \(\delta\) if necessary, the quadratic term is absorbed,
and hence
\begin{equation}\label{Eq:StationaryUniformBound}
\|(\sigma_\epsilon^*,u_\epsilon^*)\|_X
\leq C\|F\|_{\dot B_{2,\infty}^{-\frac32}\cap\dot H^k}.
\end{equation}
Recalling the stationary notation,
\[
(\rho_\epsilon^*, u_\eps^*)
=
(\rho_\infty+\epsilon^2\sigma_\epsilon^*,u_\epsilon^*),
\]
and combining \eqref{Eq:StationaryUniformBound} with
Theorem~\ref{SNSthm}, we obtain the uniform estimate \eqref{siguepsest-intro}.

It remains to prove the second-order convergence in
\eqref{machest-intro}. For this purpose, we define
\[
v_{1,\epsilon}:=\mathbb P u_\epsilon^*-u^*,
\qquad
v_{2,\epsilon}:=\mathbb Q u_\epsilon^*,
\]
and compare the stationary NSK velocity with the stationary
incompressible velocity. Applying \(\mathbb P\) to the stationary
momentum equation, subtracting
the projected stationary incompressible equation of \eqref{SNS}, and using
the fact that the pressure contribution is a gradient, we obtain
\begin{equation}\label{SteadyMachEq}
\left\{
\begin{aligned}
v_{1,\epsilon}
=&
\frac{\rho_\infty}{\mu}\Delta^{-1}\mathbb P
\operatorname{div}\Bigl(
(u_\epsilon^*-u^*)\otimes u_\epsilon
+u^*\otimes(u_\epsilon^*-u^*)
\Bigr)
\\
&+\frac{\epsilon^2}{\mu}\Delta^{-1}\mathbb P
\Bigl(
\operatorname{div}(\sigma_\epsilon^* u_\epsilon^*\otimes u_\epsilon^*)
-\kappa\sigma_\epsilon^*\nabla\Delta\sigma_\epsilon^*
-\sigma_\epsilon^* F
\Bigr),
\\
v_{2,\epsilon}
=&
-\frac{\epsilon^2}{\rho_\infty}
\nabla\Delta^{-1}
\operatorname{div}(\sigma_\epsilon^* u_\epsilon^*),
\end{aligned}
\right.
\end{equation}
where the second identity follows directly from the density equation in
\eqref{PLMSNKS}.

By Lemma~\ref{sigulem} and \eqref{Eq:StationaryUniformBound}, we have
\begin{align}\label{v2est}
\|v_{2,\epsilon}\|_{\dot B_{2,\infty}^{\frac12}\cap\dot H^{k+2}}
 \lesssim
\epsilon^2
\|\operatorname{div}(\sigma_\epsilon^* u_\epsilon^*)\|
_{\dot B_{2,\infty}^{-\frac12}\cap\dot H^{k+1}}
 \lesssim
\epsilon^2\|(\sigma_\epsilon^*,u_\epsilon^*)\|_X^2
\lesssim
\epsilon^2\delta_0^2.
\end{align}
Moreover, Proposition~\ref{BesovP} (i), (vi),
Lemma~\ref{gepslem}, Theorem~\ref{SNSthm}, and
\eqref{Eq:StationaryUniformBound} give
\begin{equation}\label{v1est}
\begin{aligned}
\|v_{1,\epsilon}\|_{\dot B_{2,\infty}^{\frac12}\cap\dot H^{k+2}}
&\lesssim
\left(
\|u_\epsilon^*\|_{\dot B_{2,\infty}^{\frac12}\cap\dot H^{k+2}}
+\|u^*\|_{\dot B_{2,\infty}^{\frac12}\cap\dot H^{k+2}}
\right)
\|u_\epsilon^*-u^*\|_{\dot B_{2,\infty}^{\frac12}\cap\dot H^{k+2}}
+\epsilon^2\delta_0^2
\\
&\lesssim
\delta_0\|u_\epsilon^*-u^*\|_{\dot B_{2,\infty}^{\frac12}\cap\dot H^{k+2}}
+\epsilon^2\delta_0^2.
\end{aligned}
\end{equation}
Since
\[
u_\epsilon^*-u^*
=
v_{1,\epsilon}+v_{2,\epsilon},
\]
the estimates \eqref{v2est}--\eqref{v1est} imply
\[
\|u_\epsilon^*-u^*\|_{\dot B_{2,\infty}^{\frac12}\cap\dot H^{k+2}}
\leq
C\delta_0\|u_\epsilon^*-u^*\|_{\dot B_{2,\infty}^{\frac12}\cap\dot H^{k+2}}
+C\epsilon^2\delta_0^2.
\]
Choosing \(\delta_0>0\) sufficiently small, we conclude that
\begin{align*}
\|u_\epsilon^*-u^*\|_{\dot B_{2,\infty}^{\frac12}\cap\dot H^{k+2}}
\leq
C\epsilon^2\delta_0^2,
\end{align*} 
which, together with \eqref{v2est},  yields
\begin{align*}
\|(\mathbb Q u_\epsilon^*,
\mathbb P u_\epsilon^*-u^*)\|
_{\dot B_{2,\infty}^{\frac12}\cap\dot H^{k+2}}
\leq
C\epsilon^2 \delta_0^2. 
\end{align*}
Thus, we obtain \eqref{machest-intro}, thereby completing the proof of Theorem \ref{LMSNSKThm-intro}.
\end{proof}

\section{The non-stationary NSK equations}\label{Sec:NSNSK}
In this section, we investigate the non-stationary problem. We first derive the perturbation system, establish the high-order and low-frequency a priori estimates, and then prove global existence and decay. Finally, we analyze the acoustic-capillary semigroup and complete the convergence estimates between the compressible and incompressible components.

\subsection{Perturbation system and source estimates} \label{S4.1}
Recalling \eqref{Sigweps-intro} together with 
\begin{align*}
\rho_\epsilon^*=\rho_\infty+\eps^2 \sigma_\epsilon^*,\quad \A=\mu \Delta + \nu \nabla \text{div},
\end{align*}
we obtain the following perturbation equations on $(\sigma_\epsilon,w_\epsilon)$:
\begin{equation}\label{Eq:MainPerEq}
\left\{
\begin{aligned}
&\partial_t \sigma_\epsilon+\frac{\rho_\infty}{\epsilon}\text{div}w_\epsilon=-\text{div}H,\\
&\partial_t w_\epsilon-\Psi(\rho_\epsilon)\A w_\epsilon+\frac1\epsilon \nabla \K\sigma_\eps=G,
\end{aligned}
\right.
\end{equation}
where
\begin{align*}
H:=\sigma_\eps (u_\eps^*+w_\eps)+\eps \sigma_\eps^* w_\eps,
\end{align*}
and
\begin{align*}
G:=&-u_\eps^*\cdot\nabla w_\epsilon-w_\epsilon\cdot\nabla u_\eps^*
-w_\epsilon\cdot\nabla w_\epsilon+\bigl(\Psi(\rho_\eps)-\Psi(\rho_\eps^*)\bigr)\A u_\eps^*\\
&-\frac1\eps\bigl(\Phi(\rho_\eps)-\gamma_0\bigr)\nabla\sigma_\epsilon-\bigl(\Phi(\rho_\eps)-\Phi(\rho_\eps^*)\bigr)\nabla \sigma_\eps^* .
\end{align*}
To apply the method of constant-coefficient linear semigroup, we reformulate the system \eqref{Eq:MainPerEq} as follows:
\begin{equation}\label{Eq:MainConCoeffEq}
\left\{
\begin{aligned}
&\partial_t \sigma_\epsilon+\frac{\rho_\infty}{\epsilon}\text{div}w_\epsilon=-\text{div}H,\\
&\partial_t w_\eps-\rho_\infty^{-1}\A w_\eps+\frac1\eps \nabla \K\sigma_\eps=\mathcal{G},
\end{aligned}
\right.
\end{equation}
where
\begin{align*}
\mathcal{G}:=G+\bigl(\Psi(\rho_\eps)-\rho_\infty^{-1}\bigr)\A w_\eps.
\end{align*}

Having derived the perturbation systems \eqref{Eq:MainPerEq} and \eqref{Eq:MainConCoeffEq}, we introduce our bootstrap framework and turn to the estimates for the source terms. Denote
\begin{align*}
M_N(t):=\sup_{0\le \tau\le t} \mathcal{E}_N(\tau),\quad \delta_F=\|F\|_{\dot B_{2,\infty}^{-\frac{3}{2}} \cap \dot H^{N+1}},
\end{align*}
and assume the bootstrap assumption
\begin{equation}\label{bootstrap}
M_N(t)+\delta_F\le \delta_0,
\end{equation}
where $\delta_0>0$ is sufficiently small. The source terms of the systems \eqref{Eq:MainPerEq} and \eqref{Eq:MainConCoeffEq} can be estimated as follows.

\begin{lem}[Density source term]\label{Hest}
Assume that \eqref{bootstrap} holds, then we have
\begin{equation}\label{Hhest}
\|\K^{1/2}H\|_{H^N}\le C\bigl(\delta_F+M_N(t)\bigr) \mathcal{D}_N(t),
\end{equation}
and
\begin{equation}\label{Hlest}
\|\K^{1/2}H\|_{\dot B^{1/2}_{2,\infty}} \le C\bigl(\delta_F+M_N(t)\bigr)\|(\K^{1/2}\sigma_\eps,w_\eps)\|_{\dot B^{3/2}_{2,\infty}\cap H^N}.
\end{equation}
\end{lem}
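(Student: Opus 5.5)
We split $H=\sigma_\eps u_\eps^*+\sigma_\eps w_\eps+\eps\sigma_\eps^*w_\eps$ into three bilinear terms and estimate each one separately.

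Since $\|\K^{1/2}z\|_{H^N}\sim\|z\|_{H^{N+1}}$ and $\|\K^{1/2}z\|_{\dot B^{1/2}_{2,\infty}}\sim\|z\|_{\dot B^{1/2}_{2,\infty}}+\|z\|_{\dot B^{3/2}_{2,\infty}}$, the bound \eqref{Hhest} reduces to controlling $\|H\|_{H^{N+1}}$. The bound \eqref{Hlest} reduces to controlling $\|H\|_{\dot B^{1/2}_{2,\infty}}+\|H\|_{\dot B^{3/2}_{2,\infty}}$.

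Throughout we use the following available bounds:
\begin{itemize}
\item from Theorem \ref{LMSNSKThm-intro} with $k=N+1$: $\|u_\eps^*\|_{\dot B^{1/2}_{2,\infty}\cap\dot H^{N+3}}+\|\sigma_\eps^*\|_{\dot B^{-1/2}_{2,\infty}\cap\dot H^{N+4}}\lesssim\delta_F$;
\item from the bootstrap \eqref{bootstrap}: $\|(\sigma_\eps,\nabla\sigma_\eps,w_\eps)\|_{\dot B^{1/2}_{2,\infty}\cap H^N}\lesssim M_N$.
\end{itemize}
Interpolating these (Proposition \ref{BesovP} (iii)) gives $L^\infty$ and $\dot B^{3/2}_{2,1}$ control of $u_\eps^*$, $\sigma_\eps^*$, $w_\eps$, $\sigma_\eps$ and their first few derivatives, all by $\delta_F+M_N$.

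\textbf{High-order estimate \eqref{Hhest}.} For the $\dot H^m$ part with $1\le m\le N+1$ we apply Lemma \ref{LA.1} in the form
\[
\|\nabla^m(fg)\|_{L^2}\lesssim\|f\|_{L^\infty}\|\nabla^m g\|_{L^2}+\|g\|_{L^3}\|\nabla^m f\|_{L^6}\quad\text{or}\quad \|\nabla^m f\|_{L^2}\|g\|_{L^\infty},
\]
choosing in each case to put all $m$ derivatives on a factor whose top derivative is in $\mathcal D_N$ (namely $\nabla\sigma_\eps$ or $\nabla w_\eps$) or on the stationary profile.
\begin{itemize}
\item For $\sigma_\eps u_\eps^*$, the term $\|u_\eps^*\|_{L^\infty}\|\nabla^m\sigma_\eps\|_{L^2}\lesssim\delta_F\mathcal D_N$ since $m\ge1$. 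The term with derivatives on $u_\eps^*$ is handled by $\|\sigma_\eps\|_{L^3}\|\nabla^m u_\eps^*\|_{L^6}$, and $\|\sigma_\eps\|_{L^3}\lesssim\|\sigma_\eps\|_{\dot B^{1/2}_{2,\infty}\cap\dot H^1}$. This factor is not in $\mathcal D_N$. So instead I would use $\|\sigma_\eps\|_{L^6}\lesssim\|\nabla\sigma_\eps\|_{L^2}\le\mathcal D_N$ together with $\|\nabla^m u_\eps^*\|_{L^3}\lesssim\delta_F$.
\item For $\sigma_\eps w_\eps$, we get $M_N\mathcal D_N$ in the same way, using $L^\infty$ bounds on one factor by $M_N$ and the gradient factor in $\mathcal D_N$.
\item For $\eps\sigma_\eps^*w_\eps$, the analogous estimate uses $\|w_\eps\|_{L^6}\lesssim\|\nabla w_\eps\|_{L^2}$.
\end{itemize}
The zero-order $L^2$ part of $\|H\|_{H^{N+1}}$ is handled by H\"older: $\|\sigma_\eps u_\eps^*\|_{L^2}\le\|\sigma_\eps\|_{L^6}\|u_\eps^*\|_{L^3}\lesssim\delta_F\mathcal D_N$, and the other two terms are treated identically. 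This is the point where the weak-Besov bound $u_\eps^*\in\dot B^{1/2}_{2,\infty}\cap\dot H^{N+3}\subset L^3\cap L^\infty$ is essential.

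\textbf{Low-frequency estimate \eqref{Hlest}.} We use Proposition \ref{BesovP} (vi), second form:
\[
\|fg\|_{\dot B^{s}_{2,\infty}}\lesssim\|f\|_{\dot B^{3/2}_{2,1}}\|g\|_{\dot B^{s}_{2,\infty}}\qquad(s=\tfrac12),
\]
applied with $f\in\{u_\eps^*,w_\eps,\eps\sigma_\eps^*\}$ (which have $\dot B^{3/2}_{2,1}$ norm $\lesssim\delta_F+M_N$). This yields $\|H\|_{\dot B^{1/2}_{2,\infty}}\lesssim(\delta_F+M_N)\|(\sigma_\eps,w_\eps)\|_{\dot B^{1/2}_{2,\infty}}$.

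That is not quite the right-hand side of \eqref{Hlest}, which carries the $\dot B^{3/2}_{2,\infty}\cap H^N$ norm. I would therefore use the other pairing: $s_1=1/2$ on the stationary or small factor in $\dot B^{1/2}_{2,\infty}$, and $s_2=3/2$ on the perturbation. The first form of (vi) would give regularity $1/2$ but requires $s_2<3/2$. I would take $s_2$ slightly less than $3/2$ and interpolate $\dot B^{s_2}_{2,1}$ between $\dot B^{3/2}_{2,\infty}$ and $H^N$; alternatively, I would use Proposition \ref{BesovP} (vii) with $s=1/2$, $\alpha=1$, $\beta=1/2$. For the $\dot B^{3/2}_{2,\infty}$ piece I would use (vii) or a Bony decomposition, placing one derivative on the perturbation, measured in $\dot B^{3/2}_{2,\infty}\cap H^N$, with the coefficient in $L^\infty\cap\dot B^{1/2}_{2,\infty}$.

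The main obstacle is exactly this bookkeeping: the low frequencies of the product $\sigma_\eps u_\eps^*$ must be controlled without spending a $\dot B^{1/2}_{2,\infty}$ norm of the perturbation, since only $\dot B^{3/2}_{2,\infty}\cap H^N$ is permitted. The first attempt is (vii) with $g=u_\eps^*\in\dot B^{1/2}_{2,\infty}$ playing the role of $\dot B^{-\alpha}$ after integrating by parts, and the perturbation in $\dot B^{3/2}_{2,\infty}$. Since $s_1+s_2=2>0$ and $s_1=1/2<3/2$, the endpoint $s_2=3/2$ is allowed in the second form of (vi) with the roles swapped, provided the perturbation is in $\dot B^{3/2}_{2,1}$. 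That norm follows by interpolating $\dot B^{3/2}_{2,\infty}$ and $H^N$, which is why $H^N$ appears on the right of \eqref{Hlest}.
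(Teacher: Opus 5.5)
Your decomposition and tools match the paper's. You split $H$ into $\sigma_\eps u_\eps^*$, $\sigma_\eps w_\eps$ and $\eps\sigma_\eps^*w_\eps$, trade $\K^{1/2}$ for one derivative, and use Lemma \ref{LA.1} for \eqref{Hhest}. For \eqref{Hlest} you use the second form of Proposition \ref{BesovP} (vi), with $\dot B^{3/2}_{2,1}$ on the perturbation and $\dot B^{1/2}_{2,\infty}$ on the other factor. Putting $L^6$ (hence $\|\nabla\cdot\|_{L^2}\le\mathcal D_N$) on the perturbation is more careful than the paper's displayed chain. That chain uses $\|u_\eps^*\|_{H^{N+1}}\|\sigma_\eps\|_{L^\infty}+\|u_\eps^*\|_{L^\infty}\|\sigma_\eps\|_{H^{N+1}}$ and stops at the energy-level quantity $\delta_F\|\K^{1/2}\sigma_\eps\|_{H^N}$.

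\textbf{The gap.} Your zero-order step fails as written. You bound $\|\sigma_\eps u_\eps^*\|_{L^2}\le\|\sigma_\eps\|_{L^6}\|u_\eps^*\|_{L^3}$ and assert $\dot B^{1/2}_{2,\infty}\cap\dot H^{N+3}\subset L^3$, but that inclusion is false. Take $f(x)=(1+|x|^2)^{-1/2}$. Then $|\hat f(\xi)|\sim|\xi|^{-2}$ near $0$ and $\hat f$ decays exponentially at infinity. Hence $\|\dot\Delta_j f\|_{L^2}\sim 2^{-j/2}$ for $j\le0$, and $f\in\dot B^{1/2}_{2,\infty}\cap\dot H^m$ for every $m\ge1$. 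Yet $\int_{\mathbb R^3}(1+|x|^2)^{-3/2}\,dx=\infty$. This $|x|^{-1}$ tail is exactly the stationary behaviour the weak space is meant to accommodate, for example for a small compactly supported $F$ with $\int F\neq0$. The space $\dot B^{1/2}_{2,\infty}$ embeds only into $L^{3,\infty}$. The same objection applies to your intermediate claim $\|\sigma_\eps\|_{L^3}\lesssim\|\sigma_\eps\|_{\dot B^{1/2}_{2,\infty}\cap\dot H^1}$, although there $\sigma_\eps\in H^N$ rescues you.

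\textbf{The fix.} The estimate you want is true; it is a Hardy-type inequality. Apply Proposition \ref{BesovP} (vi) with $(s_1,s_2)=(1,\tfrac12)$ and $(r_1,r_2)=(2,\infty)$, so that $r=2$ and $s_1+s_2-\tfrac32=0$. This gives
\[
\|\sigma_\eps u_\eps^*\|_{L^2}\lesssim\|\nabla\sigma_\eps\|_{L^2}\,\|u_\eps^*\|_{\dot B^{1/2}_{2,\infty}}\lesssim\delta_F\,\mathcal D_N(t).
\]
Equivalently, use H\"older's inequality in Lorentz spaces, $L^{6,2}\cdot L^{3,\infty}\subset L^2$. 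Your other $L^3$ bounds are fine, since $\nabla^m u_\eps^*$ for $m\ge1$ and $\sigma_\eps^*$ lie in $\dot H^{1/2}$ by interpolation. With this replacement, \eqref{Hhest} closes with the dissipation factor.

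\textbf{Two smaller points on \eqref{Hlest}.}
\begin{itemize}
\item You cannot reach $\dot B^{3/2}_{2,1}$ by interpolating with $\dot B^{3/2}_{2,\infty}$ as an endpoint, because Proposition \ref{BesovP} (iii) needs $s_1<s<s_2$ strictly. However, $H^N=L^2\cap\dot H^N\subset\dot B^{3/2}_{2,1}$ directly.
\item The $\dot B^{3/2}_{2,\infty}$ part of $\|\K^{1/2}H\|_{\dot B^{1/2}_{2,\infty}}$ needs neither (vii) nor a Bony decomposition. Since $\|H\|_{\dot B^{3/2}_{2,\infty}}\lesssim\|H\|_{H^N}$ and the right-hand side of \eqref{Hlest} contains the full $H^N$ norms, the plain Moser bound suffices, with $L^\infty$ on the coefficient and $H^N$ on the perturbation. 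This is what the paper does. Also, your idea of letting $u_\eps^*$ play the role of $\dot B^{-\alpha}$ does not fit (vii), whose $\dot B^{-\alpha}$ factor has negative index.
\end{itemize}
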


\begin{proof}
It follows from \eqref{Khalfest-intro}, Theorem \ref{LMSNSKThm-intro} and Lemma \ref{LA.1} that
\begin{align*}
\|\K^{1/2}(\sigma_\eps u_\eps^*)\|_{H^N}\lesssim \|\sigma_\eps u_\eps^*\|_{H^{N+1}} \lesssim 
\|u_\eps^*\|_{H^{N+1}}\|\sigma_\eps\|_{L^\infty}+\|u_\eps^*\|_{L^\infty}\|\sigma_\eps\|_{H^{N+1}}
\lesssim \delta_F\|\K^{1/2}\sigma_\eps\|_{H^N}.
\end{align*}
Similarly, 
\begin{align*}
\|\K^{1/2}(\sigma_\eps w_\eps)\|_{H^N}\lesssim\|\sigma_\eps\|_{H^{N+1}}\|w_\eps\|_{H^{N+1}}
\lesssim M_N(t) \mathcal{D}_N(t),
\end{align*}
and
\begin{align*}
\eps\|\K^{1/2}(\sigma_\eps^* w_\eps)\|_{H^N} \lesssim \delta_F \|w_\eps\|_{H^{N+1}} \lesssim \delta_F \mathcal{D}_N(t).
\end{align*}
Then \eqref{Hhest} holds. 

For the Besov estimate, by virtue of Proposition \ref{BesovP} (vi), we have
\begin{align*}
\|\K^{1/2}(\sigma_\eps u_\eps^*)\|_{\dot B^{1/2}_{2,\infty}} 
&\lesssim \|\sigma_\eps u_\eps^*\|_{\dot B^{1/2}_{2,\infty}\cap H^{N}}
\lesssim \|\sigma_\eps\|_{B^{3/2}_{2,\infty}\cap H^{N}}\|u_\eps^*\|_{\dot B^{1/2}_{2,\infty}\cap H^{N}}\\
&\le C\bigl(\delta_F+M_N(t)\bigr)\|(\K^{1/2}\sigma_\eps,w_\eps)\|_{\dot B^{3/2}_{2,\infty}\cap H^N}.
\end{align*}
The remaining two terms in $H$ can be controlled similarly.  Thus, the proof of Lemma \ref{Hest} is complete.
\end{proof}

\begin{lem}[Momentum source term]\label{Gest}
Assume that \eqref{bootstrap} holds, then we have
\begin{equation}\label{GHighest}
\sum_{|\alpha|\le N}\bigl|\langle \partial^\alpha w_\eps,\partial^\alpha G\rangle\bigr|
\le C\bigl(\delta_F+M_N(t)\bigr)\mathcal{D}_N(t)^2,
\end{equation}
and
\begin{equation}\label{GLowest}
\| \mathcal{G}\|_{\dot B^{-1/2}_{2,\infty}}
\le C\bigl(\delta_F+M_N(t)\bigr)\, \|(\K^{1/2}\sigma_\eps,w_\eps)\|_{\dot B^{3/2}_{2,\infty}}.
\end{equation}
\end{lem}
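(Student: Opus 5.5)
The plan is to estimate $G$ term by term, splitting it into pieces that are linear in the perturbation with stationary coefficients and pieces that are genuinely nonlinear. Throughout, the stationary profile is controlled by Theorem~\ref{LMSNSKThm-intro} with $k=N+1$:
\begin{align*}
\|u_\eps^*\|_{\dot B^{1/2}_{2,\infty}\cap \dot H^{N+3}}+\|\sigma_\eps^*\|_{\dot B^{-1/2}_{2,\infty}\cap\dot H^{N+4}}\lesssim\delta_F .
\end{align*}

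\textbf{High-order bound \eqref{GHighest}.} For the transport term $u_\eps^*\cdot\nabla w_\eps$, I apply \eqref{L6ab1} of Lemma~\ref{Lem:L6ab} with $\tilde a=u_\eps^*$ and $\tilde c=w_\eps$, which gives $\delta_F\mathcal D_N^2$. For $w_\eps\cdot\nabla u_\eps^*$, I apply \eqref{L6ab2} with $\tilde b=\nabla u_\eps^*$, which lies in $\dot B^{-1/2}_{2,\infty}\cap H^{N+1}$.

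The term $w_\eps\cdot\nabla w_\eps$ is handled directly. The zero-order pairing is bounded by $\|w_\eps\|_{L^3}\|\nabla w_\eps\|_{L^2}\|w_\eps\|_{L^6}$, where $\|w_\eps\|_{L^3}\lesssim\|w_\eps\|_{\dot B^{1/2}_{2,\infty}\cap H^1}$ by interpolation. For $|\alpha|\ge1$, I integrate by parts once and use Lemma~\ref{LA.1}, which gives $M_N\mathcal D_N^2$.

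The viscous coupling $(\Psi(\rho_\eps)-\Psi(\rho_\eps^*))\A u_\eps^*$ is rewritten as $\eps\sigma_\eps\, h(\rho_\eps,\rho_\eps^*)\A u_\eps^*$, with $h$ smooth. This is a product of $\sigma_\eps$ with a small stationary tensor, so \eqref{L6ab2}-type arguments apply. Here one must pair against $\partial^\alpha w_\eps$ rather than $\partial^\alpha\sigma_\eps$. I use $\|\sigma_\eps\|_{L^6}\lesssim\|\nabla\sigma_\eps\|_{L^2}$ and the low-order pairing $|\langle w,\sigma b\rangle|\le \|w\|_{L^6}\|\sigma\|_{L^6}\|b\|_{L^{3/2}}$. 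The factor $\|b\|_{L^{3/2}}$ is controlled because $\A u_\eps^*\in\dot B^{-3/2}_{2,\infty}\cap H^{N+1}$ lies in $L^{3/2}$ after interpolation; alternatively I keep one derivative on $w$ via duality as in Lemma~\ref{Lem:L6ab}. The same treatment applies to $(\Phi(\rho_\eps)-\Phi(\rho_\eps^*))\nabla\sigma_\eps^*=\eps\sigma_\eps h'\nabla\sigma_\eps^*$.

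\textbf{Main obstacle: the singular term.} The hardest term is $\eps^{-1}(\Phi(\rho_\eps)-\gamma_0)\nabla\sigma_\eps$. I write
\begin{align*}
\Phi(\rho_\eps)-\gamma_0=\bigl(\Phi(\rho_\eps)-\Phi(\rho_\eps^*)\bigr)+\bigl(\Phi(\rho_\eps^*)-\gamma_0\bigr),
\end{align*}
where the first part is $O(\eps\sigma_\eps)$ and the second is $O(\eps^2\sigma_\eps^*)$. The $\eps^{-1}$ is therefore cancelled, leaving $\sigma_\eps\nabla\sigma_\eps$-type and $\eps\,\sigma_\eps^*\nabla\sigma_\eps$-type terms. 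These are bounded by $(M_N+\delta_F)\mathcal D_N^2$. For this I use that $\mathcal D_N$ controls $\|\nabla\sigma_\eps\|_{H^{N+1}}$ through $\mathcal K^{1/2}$, and that $\|\sigma_\eps\|_{L^\infty}\lesssim M_N$. The top derivative $\partial^\alpha\nabla\sigma_\eps$ with $|\alpha|=N$ is harmless because $\mathcal D_N$ contains $N+2$ derivatives of $\sigma_\eps$.

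\textbf{Low-frequency bound \eqref{GLowest}.} For $\mathcal G$ (including the extra term $(\Psi(\rho_\eps)-\rho_\infty^{-1})\A w_\eps$), I use Proposition~\ref{BesovP}(vi) in the form $\|fg\|_{\dot B^{-1/2}_{2,\infty}}\lesssim\|f\|_{\dot B^{1/2}_{2,\infty}}\|g\|_{\dot B^{1/2}_{2,1}}$, assigning the weak Besov norm to the stationary or small factor and the $\dot B^{1/2}_{2,1}$ norm to the perturbation. Examples:
\begin{itemize}
\item $u_\eps^*\cdot\nabla w_\eps$: estimate $\|u_\eps^*\|_{\dot B^{1/2}_{2,\infty}}\|\nabla w_\eps\|_{\dot B^{1/2}_{2,1}}$, then bound $\dot B^{3/2}_{2,1}$ by interpolating $\dot B^{3/2}_{2,\infty}$ with $H^N$.
\item $w_\eps\cdot\nabla u_\eps^*$ and $\sigma_\eps\A u_\eps^*$: assign $\dot B^{-1/2}_{2,\infty}$ to $\nabla u_\eps^*$ or $\A u_\eps^*$, and use the second inequality of (vi) with the perturbation in $\dot B^{3/2}_{2,1}$.
\end{itemize}
Composition estimates from (v) handle $\Psi$ and $\Phi$. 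The resulting right-hand side is $(\delta_F+M_N)\|(\mathcal K^{1/2}\sigma_\eps,w_\eps)\|_{\dot B^{3/2}_{2,\infty}\cap H^N}$. Again the singular $\eps^{-1}$ term is defused by the decomposition above.
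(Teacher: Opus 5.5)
Your term-by-term scheme is the paper's own: Lemma~\ref{Lem:L6ab} for the stationary transport terms, Leibniz/commutator bounds for $w_\eps\cdot\nabla w_\eps$, the splitting $\eps^{-1}(\Phi(\rho_\eps)-\gamma_0)=O(\sigma_\eps+\eps\sigma_\eps^*)$ to remove the singular factor, and Proposition~\ref{BesovP}(v)--(vi) for the Besov part. Two steps need repair.

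\textbf{The $L^{3/2}$ claim and the zero-order pairings.} The claim that $\A u_\eps^*\in\dot B^{-3/2}_{2,\infty}\cap H^{N+1}$ lies in $L^{3/2}$ is false. No embedding lowers the integrability index below $2$: for $n$ widely separated translates of one bump, the $L^2$-based norms grow like $n^{1/2}$ but the $L^{3/2}$ norm grows like $n^{2/3}$. Use your fallback instead. Write $\Psi(\rho_\eps)-\Psi(\rho_\eps^*)=\eps\sigma_\eps m_\eps$; then Proposition~\ref{BesovP}(ii), together with (vi) taken with $s_1=s_2=1$, $r_1=r_2=2$, gives
\[
|\langle w_\eps,\eps\sigma_\eps m_\eps\A u_\eps^*\rangle|
\lesssim\|m_\eps\A u_\eps^*\|_{\dot B^{-1/2}_{2,\infty}}\|\sigma_\eps w_\eps\|_{\dot B^{1/2}_{2,1}}
\lesssim\delta_F\|\nabla w_\eps\|_{L^2}\|\nabla\sigma_\eps\|_{L^2}.
\]
This care is genuinely needed. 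The paper bounds this term by $\delta_F\|\nabla w_\eps\|_{H^N}\|\sigma_\eps\|_{H^N}$, which is not $O(\mathcal D_N^2)$ because $\|\sigma_\eps\|_{L^2}$ is not controlled by $\mathcal D_N$.

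The same issue appears in your treatment of the singular term at $\alpha=0$. Pairing $\|\sigma_\eps\|_{L^\infty}\lesssim M_N$ against $\|w_\eps\|_{L^2}$ does not give $\mathcal D_N^2$. Pair instead as $\|w_\eps\|_{L^6}\,\|\eps^{-1}(\Phi(\rho_\eps)-\gamma_0)\|_{L^3}\,\|\nabla\sigma_\eps\|_{L^2}$, where the middle factor is $\lesssim\|\sigma_\eps\|_{L^3}+\|\sigma_\eps^*\|_{L^3}\lesssim M_N+\delta_F$.

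Your integration by parts for $w_\eps\cdot\nabla w_\eps$ is fine. It also avoids the paper's identity, which silently drops $-\frac12\langle\operatorname{div}w_\eps,|\partial^\alpha w_\eps|^2\rangle$; that term does not vanish since $w_\eps$ is not divergence-free.

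\textbf{The low-frequency bound.} For \eqref{GLowest} you end with $\|(\K^{1/2}\sigma_\eps,w_\eps)\|_{\dot B^{3/2}_{2,\infty}\cap H^N}$ on the right. This is weaker than the statement (it is the analogue of \eqref{Hlest}), and you should say so rather than claim the lemma as written.

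The paper reaches the pure $\dot B^{3/2}_{2,\infty}$ form only by two moves that Proposition~\ref{BesovP} does not allow: pairing $\dot B^{-1/2}_{2,\infty}$ with $\dot B^{3/2}_{2,\infty}$ in (vi), and using (v) at $s=3/2$, $r=\infty$. For a term linear in the perturbation with a fixed smooth coefficient, such as $w_\eps\cdot\nabla u_\eps^*$, that form fails for general $w_\eps$. The reason is that $\dot B^{3/2}_{2,\infty}$ does not control the low frequencies of $w_\eps$ in $L^\infty$. So your weaker bound is what the tools actually give.

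Its justification needs a fix, though. Item (iii) cannot produce $\dot B^{3/2}_{2,1}$ from $\dot B^{3/2}_{2,\infty}$ and $H^N$, because the target index must lie strictly between the endpoint indices. Use instead
\[
\|f\|_{\dot B^{3/2}_{2,1}}\lesssim\|f\|_{\dot B^0_{2,\infty}}^{1-\theta}\|f\|_{\dot B^N_{2,\infty}}^{\theta}\lesssim\|f\|_{H^N}.
\]
Also, for $u_\eps^*\cdot\nabla w_\eps$ no $\dot B^{1/2}_{2,1}$ is needed. Item (vi) with $r_1=r_2=\infty$ already gives $\|u_\eps^*\|_{\dot B^{1/2}_{2,\infty}}\|w_\eps\|_{\dot B^{3/2}_{2,\infty}}$, as in the paper.
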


\begin{proof}
We split $G$ and $\mathcal{G}$ into the following five classes.

\textbf{(1) Stationary convection terms:} $-u_\eps^*\cdot\nabla w_\epsilon - w_\eps\cdot\nabla u_\eps^*.$ By Lemma \ref{Lem:L6ab}, we have
\begin{equation*}
\sum_{|\alpha|\le N}
\bigl|
\langle \partial^\alpha w_\eps,\partial^\alpha(u_\eps^*\cdot\nabla w_\eps+w_\eps\cdot\nabla u_\eps^*)\rangle
\bigr| \lesssim\delta_F\|\nabla w_\eps\|_{H^N}^2.
\end{equation*}
Moreover, Proposition \ref{BesovP} (vi) yields
\begin{align*}
\|u_\eps^*\cdot\nabla w_\epsilon + w_\eps\cdot\nabla u_\eps^*\|_{\dot B^{-1/2}_{2,\infty}}
\lesssim & \|u_\eps^*\|_{\dot B^{1/2}_{2,\infty}}\|\nabla w_\epsilon \|_{\dot B^{1/2}_{2,\infty}}
+\|\nabla u_\eps^*\|_{\dot B^{-1/2}_{2,\infty}}\|w_\epsilon\|_{\dot B^{3/2}_{2,\infty}}\\
\lesssim & \bigl(\delta_F+M_N(t)\bigr)\, \|(\K^{1/2}\sigma_\eps,w_\eps)\|_{\dot B^{3/2}_{2,\infty}}.
\end{align*}

\textbf{(2) Quadratic convection term:} $-w_\eps\cdot\nabla w_\eps.$ Lemma \ref{LA.1} implies
\begin{align*}
\sum_{|\alpha|\le N}\bigl|\langle \partial^\alpha w_\eps,\partial^\alpha(w_\eps\cdot\nabla w_\eps)\rangle\bigr|=&\sum_{|\alpha|\le N}\bigl|\langle \partial^\alpha w_\eps,[\partial^\alpha,w_\eps\cdot\nabla] w_\eps\rangle\bigr|\\
\lesssim & \|w_\eps\|_{H^N}\|\nabla w_\eps\|_{H^N}^2
\lesssim  M_N(t)\mathcal{D}_N(t)^2.
\end{align*}
 Proposition \ref{BesovP} (vi) gives
\begin{align*}
\|w_\eps\cdot\nabla w_\eps\|_{\dot B^{-1/2}_{2,\infty}}
\lesssim  \|w_\eps\|_{\dot B^{1/2}_{2,\infty}}\|\nabla w_\epsilon \|_{\dot B^{1/2}_{2,\infty}}
\lesssim  \bigl(\delta_F+M_N(t)\bigr)\, \|(\K^{1/2}\sigma_\eps,w_\eps)\|_{\dot B^{3/2}_{2,\infty}}.
\end{align*}

\textbf{(3) Other terms involving stationary solution:} $\bigl(\Psi(\rho_\eps)-\Psi(\rho_\eps^*)\bigr)\A u_\eps^*-\big(\Phi(\rho_\eps)-\Phi(\rho_\eps^*)\big)\nabla \sigma_\eps^*.$
Using Lemma \ref{LA.1}, we obtain 
\begin{align*}
\sum_{|\alpha|\le N}\bigl|\big\langle \partial^\alpha w_\eps,\partial^\alpha\big(\bigl(\Psi(\rho_\eps)-\Psi(\rho_\eps^*)\bigr)\A u_\eps^*\big)\big\rangle\bigr|
\lesssim\delta_F\|\nabla w_\eps\|_{H^N}\|\sigma_\eps\|_{H^N},
\end{align*}
and
\begin{align*}
\sum_{|\alpha|\le N}\big|
\big\langle \partial^\alpha w_\eps,\partial^\alpha\bigl(\big(\Phi(\rho_\eps)-\Phi(\rho_\eps^*)\big)\nabla \sigma_\eps^*\bigr)\big\rangle\big|
\lesssim\delta_F \mathcal{D}_N(t)^2.
\end{align*}
For the Besov estimates, Proposition~\ref{BesovP} (v), (vi) give
\begin{align*}
\|\bigl(\Psi(\rho_\eps)-\Psi(\rho_\eps^*)\bigr)\A u_\eps^*\|_{\dot B^{-1/2}_{2,\infty}}
&\lesssim  \|\A u_\eps^*\|_{\dot B^{-1/2}_{2,\infty}}\|\Psi(\rho_\eps)-\Psi(\rho_\eps^*) \|_{\dot B^{3/2}_{2,\infty}}
\lesssim \|u_\eps^*\|_{\dot B^{3/2}_{2,\infty}} \|\epsilon\sigma_\eps\|_{\dot B^{3/2}_{2,\infty}}\\
&\lesssim  \bigl(\delta_F+M_N(t)\bigr)\, \|(\K^{1/2}\sigma_\eps,w_\eps)\|_{\dot B^{3/2}_{2,\infty}},
\end{align*}
Similarly, we also have
\begin{align*}
\|\big(\Phi(\rho_\eps)-\Phi(\rho_\eps^*)\big)\nabla \sigma_\eps^*\|_{\dot B^{-1/2}_{2,\infty}}
&\lesssim  \|\nabla \sigma_\eps^*\|_{\dot B^{-1/2}_{2,\infty}}\|\Phi(\rho_\eps)-\Phi(\rho_\eps^*) \|_{\dot B^{3/2}_{2,\infty}}
\lesssim \|\sigma_\eps^*\|_{\dot B^{1/2}_{2,\infty}} \|\epsilon\sigma_\eps\|_{\dot B^{3/2}_{2,\infty}}\\
&\lesssim  \bigl(\delta_F+M_N(t)\bigr)\, \|(\K^{1/2}\sigma_\eps,w_\eps)\|_{\dot B^{3/2}_{2,\infty}}.
\end{align*}

\textbf{(4) Pressure nonlinearity:} $-\frac1\eps\big(\Phi(\rho_\eps)-\gamma_0\big)\nabla\sigma_\eps.$ We easily get
\begin{align*}
\left\|\eps^{-1}\big(\Phi(\rho_\eps)-\gamma_0\big)\nabla\sigma_\eps\right\|_{H^N}
\lesssim\bigl(M_N(t)+\delta_F\bigr)\|\nabla\sigma_\eps\|_{H^N}
\lesssim\bigl(M_N(t)+\delta_F\bigr)\mathcal{D}_N(t).
\end{align*}
Consequently, we have
\begin{align*}
\left\|\big\langle\partial^\alpha w_\eps,\eps^{-1}\big(\Phi(\rho_\eps)-\gamma_0\big)\nabla\sigma_\eps\big\rangle\right\|_{H^N}
\lesssim\bigl(M_N(t)+\delta_F\bigr)\mathcal{D}_N(t)^2.
\end{align*}
Furthermore,  Proposition \ref{BesovP} (v), (vi) yield 
\begin{align*}
\big\|\eps^{-1}\big(\Phi(\rho_\eps)-\gamma_0\big)\nabla\sigma_\eps\big\|_{\dot B^{-1/2}_{2,\infty}}
&\lesssim  \big\|\eps^{-1}\big(\Phi(\rho_\eps)-\gamma_0\big)\big\|_{\dot B^{1/2}_{2,\infty}}\|\nabla\sigma_\eps \|_{\dot B^{1/2}_{2,\infty}}
\lesssim \|\sigma_\eps+\eps \sigma_\eps^*\|_{\dot B^{1/2}_{2,\infty}} \|\sigma_\eps\|_{\dot B^{3/2}_{2,\infty}}\\
&\lesssim  \bigl(\delta_F+M_N(t)\bigr)\, \|(\K^{1/2}\sigma_\eps,w_\eps)\|_{\dot B^{3/2}_{2,\infty}}.
\end{align*}

\textbf{(5) Additional term for $\mathcal{G}$:} $\bigl(\Psi(\rho_\eps)-\rho_\infty^{-1}\bigr)\A w_\eps.$ By Proposition \ref{BesovP} (v), (vi), we obtain
\begin{align*}
\big\|\bigl(\Psi(\rho_\eps)-\rho_\infty^{-1}\bigr)\A w_\eps\big\|_{\dot B^{-1/2}_{2,\infty}}
&\lesssim  \|\Psi(\rho_\eps)-\rho_\infty^{-1}\|_{\dot B^{3/2}_{2,\infty}}\|\A w_\eps \|_{\dot B^{-1/2}_{2,\infty}}
\lesssim \epsilon\|\sigma_\eps+\eps \sigma_\eps^*\|_{\dot B^{3/2}_{2,\infty}} \|w_\eps\|_{\dot B^{3/2}_{2,\infty}}\\
&\lesssim  \bigl(\delta_F+M_N(t)\bigr)\, \|(\K^{1/2}\sigma_\eps,w_\eps)\|_{\dot B^{3/2}_{2,\infty}}.
\end{align*}
Combining all the above estimates yields \eqref{GHighest} and \eqref{GLowest}. 
\end{proof}

\subsection{Uniform a priori estimates}\label{S4.2}
Now, based on the estimates of source terms established in the previous section, we derive uniform bootstrap a priori estimates for the solutions to the perturbation NSK system. Moreover, combining high-order dissipative energy inequalities and low-frequency dyadic estimates with interpolation arguments, we establish global-in-time existence and time‑decay properties.

We start with high-frequency energy estimates. For each multi-index $\alpha$ with $|\alpha|\le N$, define
\begin{align*}
E_\alpha^{h,0}(t):= \frac12\Bigl(\|\K^{1/2}\partial^\alpha \sigma_\eps\|_{L^2}^2 + \rho_\infty \|\partial^\alpha w_\eps\|_{L^2}^2\Bigr).
\end{align*}
Applying the operator $\partial^\alpha$ to \eqref{Eq:MainPerEq}, and then taking the inner products of the resulting two equations with $\K\partial^\alpha \sigma_\epsilon$ and $\rho_\infty \partial^\alpha w_\epsilon$, respectively,
we obtain
\begin{align*}
&\frac{\rm d}{{\rm d}t}E_\alpha^{h,0}+ \rho_\infty\mu \int \Psi(\rho_\eps)|\nabla\partial^\alpha w_\eps|^2\,{\rm d}x
+\rho_\infty\lambda \int \Psi(\rho_\eps)|\operatorname{div}\partial^\alpha w_\eps|^2\,{\rm d}x\nonumber\\
=& -\langle \K\partial^\alpha \sigma_\eps,\partial^\alpha \text{div} H\rangle
+
\rho_\infty\langle \partial^\alpha w_\eps,\partial^\alpha G\rangle
+
\rho_\infty\langle \partial^\alpha w_\eps,[\partial^\alpha,\Psi(\rho_\eps)]\A w_\eps\rangle,
\end{align*}
where we have used the cancellation of the singular terms
\begin{align*}
\frac{\rho_\infty}{\epsilon}\int (\partial^\alpha\text{div} w_\eps\, \K\partial^\alpha\sigma_\eps + \partial^\alpha w_\eps\cdot \partial^\alpha\nabla \K \sigma_\eps)\, {\rm d}x=0.
\end{align*}
Since $\rho_\eps\ge \rho_\infty/2$, there exists a constant $c_\mu>0$ such that the viscosity part satisfies
\begin{align*}
\rho_\infty\mu \int \Psi(\rho_\eps)|\nabla\partial^\alpha w_\eps|^2\,{\rm d}x
+\rho_\infty\lambda \int \Psi(\rho_\eps)|\operatorname{div}\partial^\alpha w_\eps|^2\,{\rm d}x
\ge c_\mu \|\nabla\partial^\alpha w\|_{L^2}^2.
\end{align*}
By Lemma \ref{LA.1}, we have
\begin{align*}
|\langle \partial^\alpha w_\eps,[\partial^\alpha,\Psi(\rho_\eps)]\A w_\eps\rangle|\le C\bigl(\delta_F+M_N(t)\bigr)\mathcal{D}_N(t)^2.
\end{align*} 
For the source terms, Lemmas \ref{Hest} and \ref{Gest} yields
\begin{align*}
\sum_{|\alpha|\le N} \bigl|\langle \K\partial^\alpha \sigma_\eps,\partial^\alpha f\rangle\bigr|
=\bigl|\langle \nabla \K^{1/2}\partial^\alpha \sigma_\eps,\K^{1/2}\partial^\alpha H\rangle \bigr|
\le C\bigl(\delta_F+M_N(t)\bigr)\mathcal{D}_N(t)^2,
\end{align*}
and
\begin{align*}
\sum_{|\alpha|\le N} \bigl|\langle \partial^\alpha w_\eps,\partial^\alpha G\rangle\bigr|
\le C\bigl(\delta_F+M_N(t)\bigr)\mathcal{D}_N(t)^2.
\end{align*}
Therefore,
\begin{equation}\label{Eest}
\frac{{\rm d}}{{\rm d}t}\sum_{|\alpha|\le N}E_\alpha^{h,0}+c_\mu \|\nabla w_\eps\|_{H^N}^2
\le C\bigl(\delta_F+M_N(t)\bigr)\mathcal{D}_N(t)^2.
\end{equation}

Let $0<\eta\ll 1$ and define
\begin{equation}\label{KawaJ}
J_\alpha^h(t):=\eta \eps \langle \partial^\alpha w_\eps,\nabla\partial^\alpha \sigma_\eps\rangle,
\qquad |\alpha|\le N.
\end{equation}
It follows from \eqref{Khalfest-intro} that
\begin{equation}\label{Jest}
|J_\alpha^h(t)|\le C\eta E_\alpha^{h,0}(t).
\end{equation}
Differentiating \eqref{KawaJ} with respect to time and using the system \eqref{Eq:MainPerEq}, we have
\begin{equation}\label{Kawaest}
\begin{aligned}
&\frac{{\rm d}}{{\rm d}t} J_\alpha^h 
+\eta \langle \nabla \mathcal{K}\partial^\alpha \sigma_\eps,\nabla\partial^\alpha \sigma_\eps\rangle
+\eta\rho_\infty
\langle \partial^\alpha w_\eps,\nabla(\operatorname{div}\partial^\alpha w_\eps)\rangle \\
=&-\eta\eps \langle \partial^\alpha w_\eps,\nabla \partial^\alpha \text{div}H\rangle
+\eta\eps \langle \nabla\partial^\alpha \sigma_\epsilon, \partial^\alpha G\rangle
+\eta\eps \langle \nabla\partial^\alpha \sigma_\epsilon, \partial^\alpha (\Psi(\rho_\eps)\A w_\eps)\rangle.
\end{aligned}
\end{equation}
For the terms on the left-hand side of \eqref{Kawaest}, we easily get
\begin{align*}
\eta \langle \nabla \mathcal{K}\partial^\alpha \sigma_\eps,\nabla\partial^\alpha \sigma_\eps\rangle
=\eta \|\nabla \mathcal{K}^{1/2}\partial^\alpha \sigma_\eps\|_{L^2}^2,
\end{align*}
and
\begin{align*}
\eta\rho_\infty
\langle \partial^\alpha w_\eps,\nabla(\operatorname{div}\partial^\alpha w_\eps)\rangle
=-\eta\rho_\infty \|\operatorname{div}\partial^\alpha w_\eps\|_{L^2}^2.
\end{align*}
For the terms on the right-hand side of \eqref{Kawaest}, 
using Lemmas \ref{Hest} and \ref{Gest}, we have
\begin{align*}
|\eta\eps \langle \partial^\alpha w_\eps,\nabla \partial^\alpha \text{div}H\rangle|
&=
|\eta\eps \langle \operatorname{div}\partial^\alpha w_\eps,\operatorname{div}\partial^\alpha H\rangle| \nonumber\\
&\le C\eps \|\nabla \partial^\alpha w_\eps\|_{L^2}\|\nabla\partial^\alpha H\|_{L^2}
\le C\bigl(\delta_F+M_N(t)\bigr)\mathcal{D}_N(t)^2,
\end{align*}
and
\begin{align*}
|\eta\eps \langle \nabla\partial^\alpha \sigma_\epsilon, \partial^\alpha G\rangle|
 {
\le \eta\eps (\|\nabla \partial^\alpha \sigma_\eps\|_{L^2}^{2}+\|\partial^\alpha G\|_{L^2}^{2})
\le \eta\eps \|\nabla \partial^\alpha \sigma_\eps\|_{L^2}^{2}+C\bigl(\delta_F+M_N(t)\bigr)\mathcal{D}_N(t)^2.}
\end{align*}
For the last term of \eqref{Kawaest}, it is easy to obtain 
\begin{align}
|\eta\eps \langle \nabla\partial^\alpha \sigma_\epsilon, \partial^\alpha (\Psi(\rho_\eps)\A w_\eps)\rangle|
\le \eta\eps (\| \nabla^2\partial^{\alpha} \sigma_\epsilon\|_{L^2}^2
+ \|\nabla^{|\alpha|-1} (\Psi(\rho_\eps)\A w_\eps)\|_{L^2}^2).\label{4.12.1}
\end{align}
Applying Lemma \ref{LA.1} to the last term in \eqref{4.12.1}, we  derive
\begin{align*}
\|\nabla^{|\alpha|-1} (\Psi(\rho_\eps)\A w_\eps)\|_{L^2}^2\lesssim & \,\|\nabla^{|\alpha|+1}w_\eps\|_{L^2}^2+ \|[\nabla^{|\alpha|-1},\Phi(\rho_\eps)]\A w_\eps\|_{L^2}^2\notag\\
\lesssim & \,\|\nabla^{|\alpha|+1}w_\eps\|_{L^2}^2+\bigl(\delta_F+M_N(t)\bigr)\mathcal{D}_N(t)^2.
\end{align*}
Thus, combining the above estimates and summing over $|\alpha|\leq N$, we arrive at 
\begin{equation}\label{Jst}
\frac{{\rm d}}{{\rm d}t}\sum_{|\alpha|\le N}J_\alpha^h+c_1\eta \|\nabla K^{1/2}\sigma_\eps\|_{H^N}^2
\le c_2\eta \|\nabla w_\eps\|_{H^N}^2+ C\bigl(\delta_F+M_N(t)\bigr)\mathcal{D}_N(t)^2.
\end{equation}

Define
\begin{align*}
E_N^h(t):=\sum_{|\alpha|\le N}\bigl(E_\alpha^{h,0}(t)+J_\alpha^h(t)\bigr).
\end{align*}
It follows from \eqref{Jest} that for sufficiently small $\eta$,
\begin{align}
E_N^h(t)\sim \|(\K^{1/2}\sigma_\eps,w_\eps)(t)\|_{H^N}^2.\label{4.14.1}
\end{align}
Combining \eqref{Eest} with \eqref{Jst} yields
\begin{align*}
\frac{{\rm d}}{{\rm d}t}E_N^h(t)+c_0 \mathcal{D}_N(t)^2 \le C\bigl(\delta_F+M_N(t)\bigr)\mathcal{D}_N(t)^2.
\end{align*}
Furthermore,  choosing $\delta_0$ sufficiently small, it holds that
\begin{equation}\label{Eng}
\frac{{\rm d}}{{\rm d}t}E_N^h(t)+\frac{c_0}{2}\mathcal{D}_N(t)^2\le 0.
\end{equation}
Integrating \eqref{Eng} over time and using \eqref{4.14.1}, we conclude 
\begin{equation}\label{HighFeqest}
\|(\K^{1/2}\sigma_\eps,w_\eps)(t)\|_{H^N}^2+\int_0^t \mathcal{D}_N(\tau)^2\,{\rm d}\tau
\le C\|(\K^{1/2}\sigma_0,w_0)\|_{H^N}^2.
\end{equation}

Next, we establish low-frequency dyadic estimates. Let
\begin{align*}
\sigma_j:=\dot\Delta_j \sigma_\eps,\quad
w_j:=\dot\Delta_j w_\eps,\quad
H_j:=\dot\Delta_j H,\quad
G_j:=\dot\Delta_j G.
\end{align*}
Define the block energy
\begin{align*}
E_j(t):=E_j^0(t)+J_j(t):=\frac12\Bigl(\|\K^{1/2}\sigma_j\|_{L^2}^2+\rho_\infty\|w_j\|_{L^2}^2\Bigr)
+\eta\eps \langle w_j,\nabla\sigma_j\rangle.
\end{align*}
As in \eqref{Jest}, for $\eta$ small enough,
\begin{align*}
E_j(t)\sim \|\K^{1/2}\sigma_j\|_{L^2}^2+\|w_j\|_{L^2}^2.
\end{align*}

We now provide the estimate for $E_j(t)$ in the following lemma.
\begin{lem}\label{Lem:dyaest}
Assume that \eqref{bootstrap} holds. For every $j\in\mathbb Z$,
\begin{equation}\label{dyaest}
\frac{\rm d}{{\rm d}t}E_j + c\,2^{2j}E_j\le C2^j\|\K^{1/2}H_j\|_{L^2}E_j^{1/2}
+C\|\mathcal{G}_j\|_{L^2}E_j^{1/2},
\end{equation}
where $\{c_j\}_{j\in\mathbb Z}\in \ell^1$ and $\|c_j\|_{\ell^1}\le 1$.
\end{lem}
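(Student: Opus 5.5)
We localize the constant-coefficient system \eqref{Eq:MainConCoeffEq} in frequency. Applying $\dot\Delta_j$ gives
\[
\partial_t\sigma_j+\frac{\rho_\infty}{\eps}\dive w_j=-\dive H_j,
\qquad
\partial_t w_j-\rho_\infty^{-1}\A w_j+\frac1\eps\nabla\K\sigma_j=\mathcal G_j .
\]
Since $\dot\Delta_j$ commutes with the constant-coefficient operators $\K$, $\A$, $\nabla$ and $\dive$, there are no commutator terms. This is the reason for working with \eqref{Eq:MainConCoeffEq} rather than \eqref{Eq:MainPerEq}; the variable-viscosity correction is already contained in $\mathcal G$.

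\textbf{Step 1: basic energy $E_j^0$.} Take the $L^2$ inner product of the first equation with $\K\sigma_j$ and of the second with $\rho_\infty w_j$. The singular $\eps^{-1}$ terms cancel exactly as in the high-order estimate. This gives
\[
\frac{d}{dt}E_j^0+\mu\|\nabla w_j\|_{L^2}^2+\nu\|\dive w_j\|_{L^2}^2
=-\langle \K\sigma_j,\dive H_j\rangle+\rho_\infty\langle w_j,\mathcal G_j\rangle .
\]
Because $\K$ is self-adjoint, we can write
\[
|\langle \K\sigma_j,\dive H_j\rangle|=|\langle \K^{1/2}\sigma_j,\dive\K^{1/2}H_j\rangle|\lesssim 2^j\|\K^{1/2}H_j\|_{L^2}\|\K^{1/2}\sigma_j\|_{L^2},
\]
using Bernstein's inequality (Proposition \ref{prop2.1}). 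The term $\langle w_j,\mathcal G_j\rangle$ is bounded by $\|\mathcal G_j\|_{L^2}\|w_j\|_{L^2}$. Bernstein also gives $\|\nabla w_j\|_{L^2}^2\ge c2^{2j}\|w_j\|_{L^2}^2$.

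\textbf{Step 2: cross term $J_j$.} Differentiate $J_j=\eta\eps\langle w_j,\nabla\sigma_j\rangle$ in time and substitute the two equations. The singular terms produce the good quantities
\[
\eta\|\nabla\K^{1/2}\sigma_j\|_{L^2}^2-\eta\rho_\infty\|\dive w_j\|_{L^2}^2 ,
\]
and Bernstein gives $\|\nabla\K^{1/2}\sigma_j\|_{L^2}^2\ge c2^{2j}\|\K^{1/2}\sigma_j\|_{L^2}^2$. The remaining terms are handled as follows:
\begin{itemize}
\item the viscous term satisfies $\eta\eps|\langle\rho_\infty^{-1}\A w_j,\nabla\sigma_j\rangle|\lesssim \eta\eps\,2^{j}\|\nabla w_j\|_{L^2}\|\nabla\sigma_j\|_{L^2}$;
\item the source terms are $\eta\eps|\langle \dive w_j,\dive H_j\rangle|$ and $\eta\eps|\langle\nabla\sigma_j,\mathcal G_j\rangle|$.
\end{itemize}

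\textbf{Main obstacle.} The viscous cross term is where the difficulty lies. At high frequency the factor $2^j$ is unbounded, and it has to be absorbed into $\|\nabla w_j\|^2$ and $\|\nabla\K^{1/2}\sigma_j\|^2$. This works because $\|\nabla\sigma_j\|_{L^2}\lesssim 2^{-j}\|\nabla\K^{1/2}\sigma_j\|_{L^2}$ when $j\ge0$ (the $\kappa|\xi|^2$ part of $\K$ supplies the extra derivative), while for $j\le0$ we simply have $2^j\le1$. Hence
\[
\eps\, 2^j\|\nabla\sigma_j\|_{L^2}\lesssim\|\nabla\K^{1/2}\sigma_j\|_{L^2}
\]
uniformly in $j$. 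Young's inequality, with $\eta$ small, then lets these terms be absorbed. The source terms in the cross term are estimated the same way: $\eps\|\nabla\sigma_j\|_{L^2}\lesssim\|\K^{1/2}\sigma_j\|_{L^2}$, and $\eps\|\dive w_j\|_{L^2}\lesssim 2^j\|w_j\|_{L^2}$, which pairs with $\|\dive H_j\|_{L^2}\lesssim 2^j\|H_j\|_{L^2}$. At high frequency, the resulting extra factor is controlled by moving $\K^{1/2}$ onto $H_j$, since $2^j\|H_j\|_{L^2}\lesssim\|\K^{1/2}H_j\|_{L^2}$ there. This is only schematic: the combined factor $2^{2j}\|H_j\|\,\|w_j\|$ still carries one power of $2^j$ more than the $2^j\|\K^{1/2}H_j\|E_j^{1/2}$ on the right of \eqref{dyaest}. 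So this step must either borrow $2^j\|w_j\|_{L^2}\lesssim\|\nabla w_j\|_{L^2}$ from the dissipation via Young's inequality, or rely on the $\eps$ factor in the cross term; the estimate is delicate here.

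\textbf{Step 3: combine.} Add the two identities. For $\eta$ small, the dissipation dominates $c2^{2j}(\|\K^{1/2}\sigma_j\|_{L^2}^2+\|w_j\|_{L^2}^2)\sim c2^{2j}E_j$, using the equivalence $E_j\sim E_j^0$ already noted. Every source contribution is bounded by
\[
C\big(2^j\|\K^{1/2}H_j\|_{L^2}+\|\mathcal G_j\|_{L^2}\big)E_j^{1/2}.
\]
Together these give \eqref{dyaest}. The sequence $\{c_j\}$ mentioned in the statement plays no role in this block inequality; it only enters later, when the bounds of Lemmas \ref{Hest} and \ref{Gest} are distributed over the blocks.
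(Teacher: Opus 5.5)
Your route is the same as the paper's. You localize the constant-coefficient system \eqref{Eq:MainConCoeffEq}, pair with $\K\sigma_j$ and $\rho_\infty w_j$, add the cross term $J_j=\eta\eps\langle w_j,\nabla\sigma_j\rangle$, and use Bernstein to turn the dissipation into $c2^{2j}E_j$. Your treatment of the viscous cross term is fine. The paper writes it as $C\eta\eps 2^{2j}\|w_j\|_{L^2}\|\K^{1/2}\sigma_j\|_{L^2}\sim\eta\eps\|\nabla w_j\|_{L^2}\|\nabla\K^{1/2}\sigma_j\|_{L^2}$ and absorbs it the same way. One small point: the $-\eta\rho_\infty\|\dive w_j\|_{L^2}^2$ piece has the wrong sign and is absorbed by $\mu\|\nabla w_j\|_{L^2}^2$ for small $\eta$, which your Step 3 does only implicitly.

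The one place your write-up does not close is the term $\eta\eps\langle\dive w_j,\dive H_j\rangle$. The obstruction you describe there is a miscount, not a real difficulty. The symbol bound $|\xi|\le C(\gamma_0+\kappa|\xi|^2)^{1/2}$ gives $2^j\|H_j\|_{L^2}\lesssim\|\K^{1/2}H_j\|_{L^2}$ for \emph{every} $j\in\mathbb Z$, not only at high frequency. For $j\le 0$ this holds simply because $2^j\le1$ and the symbol of $\K^{1/2}$ is at least $\sqrt{\gamma_0}$. Hence
\[
\eta\eps\,|\langle\dive w_j,\dive H_j\rangle|\lesssim 2^{2j}\|H_j\|_{L^2}\|w_j\|_{L^2}
=2^j\bigl(2^j\|H_j\|_{L^2}\bigr)\|w_j\|_{L^2}\lesssim 2^j\|\K^{1/2}H_j\|_{L^2}\,E_j^{1/2}.
\]
This is exactly the form in \eqref{dyaest} and exactly the bound $C2^j\|\K^{1/2}H_j\|_{L^2}\|w_j\|_{L^2}$ used in the paper. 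There is no leftover power of $2^j$.

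Neither of your proposed fallbacks would work. Absorbing $2^j\|w_j\|_{L^2}$ into $\|\nabla w_j\|_{L^2}^2$ by Young's inequality leaves a term quadratic in $H_j$, of size $2^{2j}\|H_j\|_{L^2}^2$. That does not give \eqref{dyaest} in its stated form, which is linear in the source times $E_j^{1/2}$. Relying on the factor $\eps$ gives nothing beyond $\eps\le1$. The multiplier bound above is the right way to close the step, and with it your Step 3 goes through as written.
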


\begin{proof}
Applying \(\dot\Delta_j\) to the system \eqref{Eq:MainConCoeffEq} yields
\begin{equation}\label{Eq:dya}
\left\{
\begin{aligned}
&\partial_t\sigma_j+\frac{\rho_\infty}{\eps}\text{div} w_j=-\text{div} H_j,\\
&\partial_t w_j-\rho_\infty^{-1}\A w_j+\frac1\eps\nabla\mathcal K\sigma_j=\mathcal G_j.
\end{aligned}
\right.
\end{equation}
Taking the inner products of the first equation in \eqref{Eq:dya} with \(\mathcal K\sigma_j\) and the second equation in \eqref{Eq:dya} with \(\rho_\infty w_j\), 
we have
\begin{align*}
\frac12\frac{{\rm d}}{{\rm d}t}\left(\|\mathcal K^{1/2}\sigma_j\|_{L^2}^2+\rho_\infty\|w_j\|_{L^2}^2\right)
-\langle \A w_j,w_j\rangle
=-\langle \text{div} H_j,\mathcal K\sigma_j\rangle+\rho_\infty\langle \mathcal G_j,w_j\rangle,
\end{align*}
where we have used the cancellation of the singular terms
\begin{align*}
\langle \nabla\mathcal K\sigma_j,w_j\rangle=-\langle \mathcal K\sigma_j,\text{div} w_j\rangle.
\end{align*}
Moreover, by the ellipticity of \(\A\) and H\"{o}lder inequality, we arrive at
\begin{align}\label{Ej}
\frac{{\rm d}}{{\rm d}t}E_j^{0}+c_0\|\nabla w_j\|_{L^2}^2 \le C2^j\|\mathcal K^{1/2}H_j\|_{L^2}
\|\mathcal K^{1/2}\sigma_j\|_{L^2}+C\|\mathcal G_j\|_{L^2}\|w_j\|_{L^2}.
\end{align}

Differentiating $J_j(t)$ with respect to time and using the system \eqref{Eq:dya}, we have
\begin{align*}
\frac{\rm d}{{\rm d}t}J_j=& \,\eta\epsilon\rho_\infty^{-1}\langle \A w_j,\nabla\sigma_j\rangle
-\eta\langle \nabla\mathcal K\sigma_j,\nabla\sigma_j\rangle
+\eta\eps\langle \mathcal G_j,\nabla\sigma_j\rangle\\
&\,-\eta\rho_\infty\langle w_j,\nabla\text{div} w_j\rangle-\eta\eps\langle w_j,\nabla\text{div} H_j\rangle.
\end{align*}
Furthermore, integration by parts and H\"{o}lder's inequality yield
\begin{align}\label{Jj}
&\frac{\rm d}{{\rm d}t}J_j+\eta\|\mathcal K^{1/2}\nabla\sigma_j\|_{L^2}^2
-\eta\rho_\infty\|\text{div} w_j\|_{L^2}^2\nonumber\\
=&\,\eta\eps
\rho_\infty^{-1}\langle \A w_j,\nabla\sigma_j\rangle
+\eta\eps\langle \mathcal G_j,\nabla\sigma_j\rangle
-\eta\eps\langle w_j,\nabla\text{div} H_j\rangle \notag\\
\le &\, C\eta\epsilon2^{2j}\|w_j\|_{L^2}\|\mathcal K^{1/2}\sigma_j\|_{L^2}+C\eta\eps 2^j
\|\mathcal G_j\|_{L^2}\|\sigma_j\|_{L^2}+C2^j\|\mathcal K^{1/2}H_j\|_{L^2}\|w_j\|_{L^2}.
\end{align}
Thus, we get from \eqref{Ej} and \eqref{Jj} that for sufficiently small $\eta$, 
\begin{align*}
\frac{{\rm d}}{{\rm d}t}E_j+c\left(\|\nabla w_j\|_{L^2}^2+\|\mathcal K^{1/2}\nabla\sigma_j\|_{L^2}^2\right)
\le C2^j\|\mathcal K^{1/2}H_j\|_{L^2}E_j^{1/2}+C\|\mathcal G_j\|_{L^2}E_j^{1/2}.
\end{align*}
This completes the proof of \eqref{dyaest}.
\end{proof}

It follows from Lemmas \ref{Hest}, \ref{Gest} and \ref{Lem:dyaest} that
\begin{equation}\label{LowFeqest}
\sup_{t\ge 0}\|(\K^{1/2}\sigma_\eps,w_\eps)(t)\|_{\dot B^{1/2}_{2,\infty}}
\le
C\|(\K^{1/2}\sigma_0,w_0)\|_{\dot B^{1/2}_{2,\infty}}.
\end{equation}
Then, combining \eqref{HighFeqest} with \eqref{LowFeqest}, we get the uniform estimate \eqref{MainEng-intro} in Theorem \ref{NNSKThm-intro}.

With the high-order dissipative energy inequalities and low-frequency dyadic estimates in hand, we are ready to derive the decay estimate stated in Theorem \ref{NNSKThm-intro}.
Let
\begin{equation*}
U_\eps:=(\K^{1/2}\sigma_\eps,w_\eps).
\end{equation*}
For any integer \(1\leq m\leq N\), 
the homogeneous energy of order $m$ satisfies
\begin{align*}
{
\mathscr{E}_m(t) := \sum_{|\alpha|=m} E_\alpha^{h,0}(t)+J_\alpha^h(t)
   \sim\|U_\eps(t)\|_{\dot H^m}^2.}
\end{align*}
Repeating the high-order energy calculation at the homogeneous level \(\dot H^m\) gives
\begin{equation}\label{Eq:HomEnergyLevel}
\frac{{\rm d}}{{\rm d}t}\mathscr E_m(t)
+c\|\nabla U_\eps(t)\|_{\dot H^m}^2\leq0,
\end{equation}
where the constant $c$ is independent of $\eps$.
Let
\begin{align*}
    A:=\sup_{t\geq0}\|U_\eps(t)\|_{\dot B^{1/2}_{2,\infty}}.
\end{align*}
For any integer $J$, splitting the dyadic sum gives
\begin{align}
    \|U_\eps\|^2_{\dot H^m}
    &=\sum_{j\leq J} 2^{2jm}\|\dot\Delta_j U_\eps\|_{L^2}^2
    +\sum_{j> J} 2^{2jm}\|\dot\Delta_j U_\eps\|_{L^2}^2\nonumber\\
    &\leq C\bigg( \sum_{j\leq J} A^2 2^{j(2m-1)}+2^{-2J}\sum_{j> J} 2^{2jm}\|\dot\Delta_j \nabla U_\eps\|_{L^2}^2\bigg)\nonumber\\
    &\leq C\left(A^2 2^{J(2m-1)}+2^{-2J}\|\nabla U_\eps\|^2_{\dot H^m}\right).\label{4.23.1}
\end{align}
Choosing $J$ such that the last two terms in \eqref{4.23.1} are equal, we obtain
\begin{equation}
\|U_\eps\|_{\dot H^m}
\leq C A^{1/(m+1/2)}
\|\nabla U_\eps\|_{\dot H^m}^{(m-1/2)/(m+1/2)}.\label{4.24.1}
\end{equation}
Inserting \eqref{4.24.1} into \eqref{Eq:HomEnergyLevel} yields a Bernoulli-type ordinary differential inequality, which implies
\begin{align*}
    \mathscr E_m(t)\leq C_m(1+t)^{-(m-1/2)}\delta_0^2,
\end{align*}
and therefore,
\begin{equation}\label{Eq:HmDecay}
\|U_\eps(t)\|_{\dot H^m}
\leq C_m(1+t)^{-(m-1/2)/2}\delta_0.
\end{equation}
For \(1/2<s<m\), 
Proposition \ref{BesovP} (iii) gives
\begin{equation}\label{Eq:LowHighInterpolation}
\|z\|_{\dot B^s_{2,1}}
\leq C_{s,m}
\|z\|_{\dot B^{1/2}_{2,\infty}}^{\theta}
\|z\|_{\dot H^m}^{1-\theta},
\qquad
\theta=\frac{m-s}{m-1/2}.
\end{equation}
For $1/2<s<N-2$, taking an integer \(m\in(s,N-2]\), and using \eqref{LowFeqest} and \eqref{Eq:HmDecay}, we deduce
\begin{equation}\label{TimeDecay}
\|U_\eps(t)\|_{\dot B^s_{2,1}}
\leq C_s(1+t)^{-(s-1/2)/2}\delta_0.
\end{equation}

Next, we justify the global existence. Let $J_n$ denote the Fourier truncation onto the frequencies $|\xi|\leq 2^n$.
Applying $J_n$ to the system \eqref{Eq:MainPerEq}, we obtain the truncated system, which is a finite-dimensional ordinary differential equation. The estimates \eqref{HighFeqest} and \eqref{LowFeqest} hold uniformly with respect to the truncation parameter $n$ for the approximate solutions. Since $N\geq 5$, the Sobolev embedding $H^N\hookrightarrow L^\infty$ ensures the density lower bound
\begin{align*}
    \rho_\eps^*+\eps\sigma_\eps\geq\rho_\infty/2
\end{align*}
as long as the bootstrap norm is small. Thanks to the uniform a priori bounds, standard compactness arguments on each finite time interval allow us to extract a convergent subsequence and pass $n\to\infty$, yielding a strong solution on some maximal time interval $[0,T^*)$. We claim $T^*=\infty$. Suppose, for contradiction, $T^*<\infty$, then \eqref{HighFeqest} and \eqref{LowFeqest} improve the bootstrap bound from \(2C\delta_0\) to \(C\delta_0\). This improved bound enables us to extend the solution beyond $T^*$, contradicting the maximality of $T^*$. Therefore, the solution exists globally in time. 
To establish uniqueness, we apply identical energy estimates to the difference of two strong solutions, which forces the difference to vanish identically.

Finally, we conclude this subsection by establishing some estimates for the incompressible perturbation, needed for the subsequent low-Mach-number convergence analysis.
The perturbation $\widetilde{u}:=u-u^*$ satisfies the following perturbed Stokes system
\begin{equation}\label{Eq:IncompPerturbation}
\partial_t\widetilde u-\frac{\mu}{\rho_\infty}\Delta\widetilde u
=-\mathbb P\bigl(u^*\cdot\nabla\widetilde u
+\widetilde u\cdot\nabla u^*
+\widetilde u\cdot\nabla\widetilde u\bigr),
\quad \operatorname{div}\widetilde u=0.
\end{equation}
The initial data can be decomposed as
\begin{align*}
    \widetilde u(0)=\mathbb P w_{\eps,0}+(\mathbb P u_\eps^*-u^*),
\end{align*}
which is small in $\dot B^{1/2}_{2,\infty}\cap H^N$ by our assumptions.
The linear terms with stationary coefficients $u^*$ are handled by Lemma \ref{Lem:L6ab}, and the quadratic term $\widetilde u\cdot\nabla\widetilde u$  {is estimated by the same product and commutator estimates as those applied to
\(w_\eps\cdot\nabla w_\eps\) in the proof of Lemma~\ref{Gest} }. Combining the dyadic heat semigroup estimates and the low-high frequency Littlewood--Paley interpolation, we obtain the uniform bound
\begin{align}
&\sup_{t\geq0}\|\widetilde u(t)\|_{\dot B^{1/2}_{2,\infty}\cap H^N}
+\|\nabla\widetilde u\|_{L^2_tH^N}
\leq C\delta_0,
\label{Eq:IncompUniform}\\
&\|\widetilde u(t)\|_{\dot B^m_{2,1}}
\leq C_m(1+t)^{-(m-1/2)/2}\delta_0,
\quad 1/2<m<N-2.
\label{Eq:IncompDecay}
\end{align}

\subsection{Linear semigroup and dispersive Strichartz theory}\label{S4.3}
To handle the oscillatory acoustic part in the Duhamel formulation, we analyze the linearized acoustic-capillary subsystem via symmetrized variables and derive a set of dispersive Strichartz estimates required for the low-Mach-number convergence analysis.

To isolate the compressible component of the velocity, we set
\begin{equation}\label{Eq:qdDef}
    q:=\mathbb{Q}w, \quad d:=\Lambda^{-1}\dive q.
\end{equation}
Then the linear compressible system reads
\begin{equation}\label{4.32.1}
\begin{cases}
    \partial_t \sigma+\frac{\rho_\infty}{\epsilon}\Lambda d=F_1,\\
    \partial_t d-\nu_0 \Delta d-\frac{1}{\epsilon}\Lambda \mathcal{K}\sigma=F_2,
\end{cases}
\end{equation}
where $\nu_0=\frac{(\mu+\nu)}{\rho_\infty}$. Taking the Fourier transform of \eqref{4.32.1} and denoting 
\begin{align*}
    r:=|\xi|,\quad k(r):=\gamma_0+\kappa r^2,
\end{align*}
we deduce the characteristic equation for the homogeneous system of \eqref{4.32.1}:
\begin{equation*}
    \lambda^2+\nu_0 r^2\lambda +\frac{\rho_\infty r^2}{\eps^2}(\gamma_0+\kappa r^2)=0.
\end{equation*}
Take $\eps_0>0$ small enough, such that $l_{\eps_0}\geq \frac{1}{2}\rho_\infty\kappa$, where
\begin{align}\label{l-eps}
    l_\eps:=\rho_\infty\kappa-\frac{\eps^2 \nu_0^2}{4}. 
\end{align}
When $0<\eps\leq \eps_0$, the characteristic roots are
\begin{equation}\label{characteristic roots}
    \lambda_{\pm}(r)=-\frac{\nu_0 r^2}{2}\pm i\Omega_\eps(r),
\end{equation}
where
\begin{align*}
   \Omega_\eps(r)=\frac{r}{\eps}h_\eps(r),\quad h_\eps(r)=(\rho_\infty \gamma_0+l_\eps r^2)^{\frac{1}{2}}.
\end{align*}
Moreover,
\begin{align*}
    h_\eps(r)\sim  \langle r\rangle,\quad \Omega_\eps(r)\sim \left\{
    \begin{array}{cc}
         \frac{r}{\eps},&  0<r\lesssim 1,\\
         \frac{r^2}{\eps},& r\gtrsim 1.
    \end{array}\right.
\end{align*}
Since
\begin{align*}
    \left|\operatorname{Im}\frac{\eps\lambda_\pm(r)}r\right|
=h_\eps(r)\sim\langle r\rangle,
\end{align*}
the spectral projections are not zero-order multipliers in the original variables $(\sigma,d)$. To overcome this difficulty, we introduce the symmetrized variable
\begin{align*}
    V:=\left(\begin{array}{c}
         \mathcal K^{\frac{1}{2}}\sigma \\
         \sqrt{\rho_\infty}d 
    \end{array}\right),
\end{align*}
then the homogeneous system becomes
\begin{align}\label{Eq:SymmetricMatrix}
    \partial_t \hat{V}=A_\eps(\xi)\hat{V}, \quad A_\eps(\xi)=
    \left(\begin{array}{cc}         
    0              & - \frac{r\sqrt{\rho_\infty k(r)}}{\eps}\\         
    \frac{r\sqrt{\rho_\infty k(r)}}{\eps} & -\nu_0 r^2    
    \end{array}\right).
\end{align}
The eigenvalues of $A_\eps(\xi)$ are $\lambda_{\pm}(r)$ given by \eqref{characteristic roots}.

Let $\Pi_\pm(\xi)$ be the spectral projections of $A_\eps(\xi)$. Then $\Pi_\pm(\xi)$ satisfy the following uniform Mikhlin bound.
\begin{lem}\label{Lem:UniformProjection}
For any multi-index $\alpha$, there exists a constant $C_\alpha>0$ independent of $\eps\in(0,\eps_0]$, such that
    \begin{align}
        |\partial_\xi^\alpha \Pi_{\pm}(\xi)|\leq C_\alpha |\xi|^{-|\alpha|}, \quad \xi\neq 0.\label{4.36.1}
    \end{align}
\end{lem}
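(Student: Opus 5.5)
The plan is to compute $\Pi_\pm(\xi)$ explicitly and show that every entry is a radial function $f(|\xi|)$ satisfying $|f^{(m)}(r)|\le C_m r^{-m}$, with $C_m$ independent of $\eps$. Write $a(r):=r\sqrt{\rho_\infty k(r)}/\eps$ and $b(r):=\nu_0 r^2$, so that $A_\eps=\begin{pmatrix}0&-a\\ a&-b\end{pmatrix}$. The eigenvalues are $\lambda_\pm=-b/2\pm i\Omega_\eps$ with $\Omega_\eps^2=a^2-b^2/4>0$ for $\eps\le\eps_0$. Since they are distinct, the spectral projections are given by
\begin{align*}
\Pi_\pm(\xi)=\frac{A_\eps(\xi)-\lambda_\mp I}{\lambda_\pm-\lambda_\mp}
=\pm\frac{1}{2i\Omega_\eps}\begin{pmatrix}\frac b2\mp i\Omega_\eps & -a\\ a & -\frac b2\mp i\Omega_\eps\end{pmatrix}.
\end{align*}
Consequently every entry of $\Pi_\pm$ is a constant ($\pm\tfrac12$-type terms) plus a constant multiple of one of the two ratios
\begin{align*}
f_1(r):=\frac{a}{\Omega_\eps}=\frac{\sqrt{\rho_\infty(\gamma_0+\kappa r^2)}}{h_\eps(r)},\qquad
f_2(r):=\frac{b}{\Omega_\eps}=\frac{\eps\nu_0 r}{h_\eps(r)}.
\end{align*}
In both ratios the singular factor $r/\eps$ cancels. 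This cancellation is exactly the reason for using the symmetrized variable $V$.

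Next I would verify the symbol bounds $|f_i^{(m)}(r)|\le C_m r^{-m}$ uniformly in $\eps$. By \eqref{l-eps} and the choice of $\eps_0$, we have $\tfrac12\rho_\infty\kappa\le l_\eps\le\rho_\infty\kappa$. For $f_1$, write $f_1(r)=g_\eps(r^2)$ with
\begin{align*}
g_\eps(s)=\Bigl(\frac{\rho_\infty\gamma_0+\rho_\infty\kappa s}{\rho_\infty\gamma_0+l_\eps s}\Bigr)^{1/2}.
\end{align*}
The ratio inside the root lies in $[1,2]$, and each $s$-derivative of it gains a factor $(1+s)^{-1}$, with constants depending only on $\rho_\infty,\gamma_0,\kappa$. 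Hence $|g_\eps^{(m)}(s)|\lesssim_m (1+s)^{-m}$. By Faà di Bruno applied to $r\mapsto r^2$, this gives $|f_1^{(m)}(r)|\lesssim_m \langle r\rangle^{-m}\le r^{-m}$. For $f_2$, the factor $\eps\le1$ only helps. The function $r/h_\eps(r)$ equals $r$ times a function of $s=r^2$ with the same uniform bounds, so $|\partial_r^m(r/h_\eps)|\lesssim r^{1-m}\langle r\rangle^{-1}\lesssim r^{-m}$.

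Finally, I would pass from radial to $\xi$-derivatives. For $f(|\xi|)$, the chain rule gives $\partial_\xi^\alpha f(|\xi|)$ as a sum of terms $f^{(m)}(|\xi|)\prod\partial_\xi^{\beta_i}|\xi|$ with $\sum\beta_i=\alpha$ and $m\le|\alpha|$. Since $|\partial^{\beta}|\xi||\lesssim|\xi|^{1-|\beta|}$, each term is $\lesssim r^{-m}r^{m-|\alpha|}=|\xi|^{-|\alpha|}$, which yields \eqref{4.36.1}.

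The only delicate point is uniformity in $\eps$. It rests on the two-sided bound on $l_\eps$, which keeps $h_\eps\sim\langle r\rangle$ with $\eps$-independent constants, and on the cancellation of $r/\eps$. Without symmetrization, the same computation would produce entries with the unbounded factor $\langle r\rangle$, as noted before the lemma.
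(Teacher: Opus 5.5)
Your proof is correct and follows essentially the same route as the paper. Both compute $\Pi_\pm$ explicitly, use the cancellation of $r/\eps$ to reduce to the two ratios $\sqrt{\rho_\infty k(r)}/h_\eps(r)$ and $\eps\nu_0 r/h_\eps(r)$, obtain $\eps$-uniform symbol bounds from $\tfrac12\rho_\infty\kappa\le l_\eps\le\rho_\infty\kappa$, and pass to $\xi$-derivatives by the radial chain rule. The paper phrases the symbol bounds through $(r\partial_r)^k$, while you substitute $s=r^2$ and use Fa\`a di Bruno; the two are equivalent. One small slip does not affect the argument: the diagonal entries of $\Pi_\pm$ should be $\frac b2\pm i\Omega_\eps$ and $-\frac b2\pm i\Omega_\eps$, since your version gives trace $-1$ instead of $1$.
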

\begin{proof}
Since $\lambda_+(r)-\lambda_-(r)=2i\Omega_\eps$, we have
\begin{equation*}
\Pi_\pm(r)=\frac{\pm(\lambda_{\pm}(r){\mathbb{I}_2}-A_\eps)}{\lambda_+(r)-\lambda_-(r)}=\frac12{\mathbb{I}_2}\pm\frac{A_\eps+\frac12\nu_0r^2 {\mathbb{I}_2}}{2i\Omega_\eps}
=\frac12{\mathbb{I}_2}\pm\frac1{2i}
\begin{pmatrix}
\alpha_\eps(r)&-\beta_\eps(r)\\
\beta_\eps(r)&-\alpha_\eps(r)
\end{pmatrix},
\end{equation*}
where $\mathbb{I}_2$ denotes the second-order identity matrix, and 
\begin{align*}
\alpha_\eps(r)=\frac{\eps\nu_0 r}{2h_\eps(r)},\quad
\beta_\eps(r)=\frac{\sqrt{\rho_\infty k(r)}}{h_\eps(r)}.
\end{align*}
With the help of \eqref{l-eps}, we have
\begin{align*}
    \rho_\infty \kappa\geq l_\eps\geq \frac{1}{2}\rho_\infty \kappa>0.
\end{align*}
Direct computations yield
\begin{align*}
    r\partial_r \alpha_\eps(r)=\frac{l_\eps r^2}{\rho_\infty\gamma_0+l_\eps r^2}\alpha_\eps(r),\quad
    r\partial_r \beta_\eps(r)=\left(\frac{\rho_\infty\gamma_0}{\rho_\infty\gamma_0+l_\eps r^2}-\frac{\gamma_0}{\gamma_0+\kappa r^2}\right)\beta_\eps(r),
\end{align*}
which, together with the principle of mathematical induction, implies that
for every integer \(k\geq0\),
\begin{align*}
    \sup_{r>0}|(r\partial_r)^k\alpha_\eps(r)|
    +\sup_{r>0}|(r\partial_r)^k\beta_\eps(r)|\leq C_k.
\end{align*}
Then, applying the derivative formula for radial functions, we obtain 
\begin{align*}
    |\partial_\xi^\alpha \Pi_{\pm}(\xi)|\leq |\xi|^{-|\alpha|}\sum_{k=0}^{|\alpha|}|c_{\alpha,k}|(|(r\partial_r)^k\alpha_\eps(r)|+|(r\partial_r)^k\beta_\eps(r)|) \leq C_\alpha |\xi|^{-|\alpha|},
\end{align*}
which is the desired estimate \eqref{4.36.1}. Thus, we complete the proof of this lemma.
\end{proof}
Let $S_\eps(t)$ denote the semigroup generated by \eqref{Eq:SymmetricMatrix}. The explicit expression of $S_\eps(t)$ is
\begin{equation*}
S_\eps(t)=e^{-\nu_0 r^2t/2}
\bigl(e^{it\Omega_\eps(r)}\Pi_+(r)
+e^{-it\Omega_\eps(r)}\Pi_-(r)\bigr).
\end{equation*} 
We fix a dividing point between low- and high-frequency. Since changing finitely many blocks only alters the constants and does not affect the essential nature of estimates, we write \(j\leq0\) for the low-frequency blocks and \(j\geq1\) for the high-frequency blocks. Under this classification, we obtain the following dispersive estimates.
\begin{lem}[Dyadic dispersive estimates]\label{Lem:DyadicDispersion}
Let \(2\leq p\leq\infty\), $\frac{1}{p}+\frac{1}{p^\prime}=1$, and
\(\delta_p:=\frac12-\frac1p\).  
Then, it holds that for any function $Z\in L^{p^\prime}$, 
\begin{align}
\|\dot\Delta_jS_\eps(t)Z\|_{L^p}
&\leq Ce^{-c2^{2j}t}2^{4j\delta_p}
\left(\frac\eps t\right)^{2\delta_p}
\|\dot\Delta_jZ\|_{L^{p'}},
&&j\leq0,
\label{Eq:LowDispersion}\\
\|\dot\Delta_jS_\eps(t)Z\|_{L^p}
&\leq Ce^{-c2^{2j}t}
\left(\frac\eps t\right)^{3\delta_p}
\|\dot\Delta_jZ\|_{L^{p'}},\label{Eq:HighDispersion}
&&j\geq1.
\end{align}
 Moreover, for $j\in\mathbb Z$ and any function $Z\in L^{2}$, we have
\begin{align}
\|\dot\Delta_jS_\eps(t)Z\|_{L^p}
\leq Ce^{-c2^{2j}t}2^{3j\delta_p}
\|\dot\Delta_jZ\|_{L^2}.
\label{Eq:EnergyBernstein}
\end{align}
\end{lem}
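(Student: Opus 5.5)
Using the explicit form of $S_\eps(t)$, I will reduce everything to scalar oscillatory kernels. Write $\dot\Delta_j S_\eps(t)Z=\sum_\pm K^\pm_{j}(t)\ast \widetilde{\dot\Delta}_j\dot\Delta_j Z$, where
\begin{align*}
K^\pm_j(t,x)=\mathcal F^{-1}\Bigl(\varphi(2^{-j}\xi)\,e^{-\nu_0|\xi|^2t/2}\,e^{\pm it\Omega_\eps(|\xi|)}\,\Pi_\pm(\xi)\Bigr).
\end{align*}
By Lemma~\ref{Lem:UniformProjection}, $\Pi_\pm$ restricted to the annulus $|\xi|\sim2^j$ is a smooth symbol whose rescaled derivatives are bounded uniformly in $\eps$ and $j$. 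The heat factor gives $e^{-c2^{2j}t}$ after rescaling; to make this rigorous I will split $e^{-\nu_0 r^2t/2}=e^{-\nu_0 r^2 t/4}e^{-\nu_0 r^2t/4}$ and absorb the second copy, with its derivatives, into the amplitude. The proof then rests on three standard estimates. For \eqref{Eq:EnergyBernstein}, Plancherel and the uniform $L^\infty$ bound on the matrix symbol give an $L^2\to L^2$ bound $Ce^{-c2^{2j}t}$. Bernstein's inequality (Proposition~\ref{prop2.1}) from $L^2$ to $L^p$ on an annulus of radius $2^j$ then costs $2^{3j(1/2-1/p)}=2^{3j\delta_p}$. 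This part is routine.

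For \eqref{Eq:LowDispersion} and \eqref{Eq:HighDispersion}, I interpolate (Riesz--Thorin) between the $L^2\to L^2$ bound and an $L^1\to L^\infty$ bound, i.e.\ an $L^\infty$ bound on $K_j^\pm(t,\cdot)$. Exponent $2\delta_p$ on the $L^\infty$ bound yields the factor $(\eps/t)^{2\delta_p}$ from a decay $(\eps/t)$ in the low case, and $3\delta_p$ from a decay $(\eps/t)^{3/2}$ in the high case. After rescaling $\xi=2^j\eta$, the kernel is
\begin{align*}
2^{3j}\int e^{i(2^jx\cdot\eta\pm t\Omega_\eps(2^j|\eta|))}a_j(t,\eta)\,d\eta,\qquad |\eta|\sim1,
\end{align*}
with radial phase $\phi(\rho)=t\Omega_\eps(2^j\rho)$.
\begin{itemize}
\item High frequencies ($j\ge1$): $\Omega_\eps\sim r^2/\eps$ with $\Omega_\eps''(r)\gtrsim1/\eps$ and $\Omega_\eps'(r)/r\gtrsim 1/\eps$, uniformly in $\eps\le\eps_0$ thanks to $l_\eps\ge\rho_\infty\kappa/2$. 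The Hessian of $\xi\mapsto t\Omega_\eps(|\xi|)$ therefore has all three eigenvalues $\gtrsim t/\eps$ on the annulus. Stationary phase in $\mathbb R^3$ (Littman's estimate, or direct integration by parts away from critical points) gives $|K_j|\lesssim (t/\eps)^{-3/2}$ with no power of $2^j$. The Hessian scales like $2^{2j}$ and the volume like $2^{3j}$; these balance against the nondegenerate eigenvalues, each of size $t2^{2j}/\eps\cdot 2^{-2j}$. This gives \eqref{Eq:HighDispersion}.
\item Low frequencies ($j\le0$): $\Omega_\eps(r)=r h_\eps(r)/\eps$ is wave-like. The radial second derivative is $\Omega_\eps''\sim r/\eps$, but curvature in the radial direction is weak. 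I will use only the two angular eigenvalues $\Omega_\eps'(r)/r\sim1/(\eps r)$, which yield the decay $(t2^{j}/\eps)^{-1}$ after rescaling (the wave-type $(t)^{-1}$ dispersion in 3D). Multiplying by the volume factor $2^{3j}$ gives $|K_j|\lesssim 2^{3j}(\eps/(t2^j))=2^{2j}\eps/t$. Interpolation gives $2^{4j\delta_p}(\eps/t)^{2\delta_p}$, matching \eqref{Eq:LowDispersion}.
\end{itemize}
When $t\lesssim\eps$ the trivial $L^1\to L^\infty$ bound $2^{3j}$ suffices; this is compatible with the claims, since $2^{3j}\le 2^{2j}\eps/t$ and $\le(\eps/t)^{3/2}$ respectively (for $j\ge1$ up to the heat factor, which absorbs extra powers of $2^j$ when $t\gtrsim 2^{-2j}$, otherwise one uses $t/\eps\ge$ the appropriate bound). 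I will handle this case split carefully.

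The main obstacle is the low-frequency stationary phase. There the radial curvature $\Omega_\eps''$ is only of relative size $r^2$ compared with the angular curvature. I must obtain the $(t2^j/\eps)^{-1}$ decay from the sphere, via the standard decay of the Fourier transform of surface measure, $|\widehat{d\sigma}(x)|\lesssim|x|^{-1}$, applied to the angular integral. I must also do this uniformly in $\eps$ with amplitudes $a_j$ that depend on $t$ only through the bounded heat factor. I will write the integral in polar coordinates, integrate over the sphere first to get a Bessel-type factor $\lesssim(2^j|x|)^{-1}$, and then perform the radial integral. For $|x|\gg t\,\Omega_\eps'(2^j)$ I use nonstationary phase. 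Otherwise $|x|\sim t/\eps$ and the sphere decay gives the claimed bound. The uniform symbol bounds from Lemma~\ref{Lem:UniformProjection} guarantee that all constants are independent of $\eps$.
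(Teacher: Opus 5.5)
Your proposal is correct and follows the paper's proof essentially step for step. The shared ingredients are Plancherel and Bernstein for \eqref{Eq:EnergyBernstein}, and Riesz--Thorin interpolation between the $L^2\to L^2$ bound and an $L^\infty$ kernel bound. For $j\le0$ both arguments use the exact spherical average $4\pi\sin(r|x|)/(r|x|)$, whose $|x|^{-1}$ factor gives $2^{2j}\eps/t$ on $|x|\sim t/\eps$. For $j\ge1$ both use rescaled stationary phase with a uniformly nondegenerate Hessian; your splitting of the heat factor into the amplitude is a detail the paper leaves implicit.

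Two corrections to your case analysis.

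\emph{First}, the $|x|^{-1}$ decay alone already handles $|x|\gtrsim t/\eps$, since there $2^{2j}/|x|\lesssim 2^{2j}\eps/t$. The region that actually needs non-stationary phase is $|x|\ll t/\eps$, where $2^{2j}/|x|$ is too weak, and this region is missing from your plan. There you need integration by parts in $r$ when $t2^j/\eps>1$, keeping $\sin(r|x|)/(r|x|)$ in the amplitude when $|x|\lesssim 2^{-j}$; when $t2^j/\eps\le1$ the trivial bound suffices.

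\emph{Second}, for $j\ge1$ the switch to the trivial bound $C2^{3j}e^{-c2^{2j}t}$ must be made at $\tau:=t2^{2j}/\eps=1$, not at $t\sim\eps$. For $\eps2^{-2j}<t\lesssim\eps$ one has $2^{3j}>(\eps/t)^{3/2}$. The heat factor $e^{-c2^{2j}t}=e^{-c\eps\tau}$ cannot absorb the ratio $\tau^{3/2}$ uniformly in $\eps$. In that window you must use the stationary-phase bound instead, which holds for all $\tau\ge1$.
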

\begin{proof}
 {By Lemma  \ref{Lem:UniformProjection}, the spectral projectors are uniformly bounded
zero-order Fourier multipliers.}
Utilizing spherical coordinates $\xi=r\omega$, the kernel is reduced to
    \begin{align*}
        K(t,x)&=\int_{\mathbb{R}^3} e^{ix\cdot \xi}e^{-\frac{1}{2}\nu_0r^2t} e^{\pm it\Omega_\eps(r)}\Pi_{\pm}(r) {\rm d}\xi\\
        &=\int_0^\infty r^2 e^{-\frac{1}{2}\nu_0r^2t} e^{\pm it\Omega_\eps(r)}\Pi_{\pm}(r) \left(\iint_{\mathbb S^2}e^{ir\omega\cdot x}{\rm d}\sigma(\omega)\right) {\rm d}r\\
        &=\int_0^\infty r^2 e^{-\frac{1}{2}\nu_0r^2t} e^{\pm it\Omega_\eps(r)}\Pi_{\pm}(r) \frac{4\pi\sin(r|x|)}{r|x|} {\rm d}r\\
        &=\frac{4\pi}{|x|}\int_0^\infty r e^{-\frac{1}{2}\nu_0r^2t} e^{\pm it\Omega_\eps(r)}\Pi_{\pm}(r) \frac{e^{ir|x|}-e^{-ir|x|}}{2i} {\rm d}r,
    \end{align*}
    which is a sum of one-dimensional integrals with phases $t\Omega_\eps(r)\pm r|x|$ and a prefactor $|x|^{-1}$. 
    
    For $j\leq 0$ and $2^{j-1}\leq r\leq 2^{j+1}$, we have
    \begin{align*}
        \partial_r(t\Omega_\eps)=\frac{t}{\eps}(h_\eps(r)+rh_\eps'(r))\sim \frac{t}{\eps}.
    \end{align*}
     If $|x|$ is not comparable with ${t}/{\eps}$, the phase derivative is bounded from below and integration by parts gives rapid decay.  In the remaining region \(|x|\sim t/\eps\), the prefactor is \(O(\eps/t)\), while the radial amplitude has integral \(O(2^{2j})\). Thus
\begin{equation}\label{Eq:LowKernel}
\|K_j^\ell(t,\cdot)\|_{L^\infty}
\leq Ce^{-c2^{2j}t}2^{2j}\frac\eps t.
\end{equation}
When \(t2^j/\eps>1\), the right-hand side of \eqref{Eq:LowKernel} is bounded by $Ce^{-c2^{2j}t} 2^{3j}$, improving the trivial kernel bound $C2^{3j}$. When \(t2^j/\eps\leq1\), we have $2^{2j}\eps/t\geq 2^{3j}$, so the trivial kernel bound $C2^{3j}$ is sharper.

For $j\geq 1$, rescale $\xi=2^j \eta$, then $\frac{1}{2}\leq|\eta|\leq 2$, and
\begin{align*}
    t\Omega_\eps(|\xi|)=t\Omega_\eps(2^j|\eta|)
    =\frac{t 2^{2j}}{\eps}|\eta|(l_\eps|\eta|^2+\rho_\infty \gamma_0 2^{-2j})^{\frac{1}{2}}.
\end{align*}
Define the oscillatory parameter $\tau=t2^{2j}/\eps$ and the rescaled phase
\begin{align*}
    \phi_{\eps,j}(\eta)=|\eta|\bigl(l_\eps|\eta|^2+\rho_\infty\gamma_0 2^{-2j}\bigr)^{\frac{1}{2}}.
\end{align*}
The Hessian of $\phi_{\eps,j}(\eta)$ is uniformly nondegenerate on the fixed annulus $\frac{1}{2}\leq|\eta|\leq 2$. Stationary phase in three dimensions gives
\begin{equation}\label{Eq:HighKernel}
\|K_j^h(t,\cdot)\|_{L^\infty}
\leq C2^{3j}e^{-c2^{2j}t} \left(\frac{t 2^{2j}}{\eps}\right)^{-\frac{3}{2}}
\leq Ce^{-c2^{2j}t}\left(\frac{\eps}{t}\right)^{\frac{3}{2}}.
\end{equation}
The same dichotomy at $\tau=t 2^{2j}/\eps=1$ applies: 
for $\tau> 1$, \eqref{Eq:HighKernel} is the sharper bound; while for \(\tau\leq1\), we use the trivial estimate $C2^{3j}$.

Consequently, interpolating the \(L^1\to L^\infty\) estimates \eqref{Eq:LowKernel}--\eqref{Eq:HighKernel} with the \(L^2\to L^2\) estimate gives \eqref{Eq:LowDispersion}--\eqref{Eq:HighDispersion}.  
Furthermore, by virtue of Bernstein's inequality and  \eqref{Eq:LowDispersion}--\eqref{Eq:HighDispersion} with $p=2$, we have
\begin{align*}
    \|\dot\Delta_jS_\eps(t)Z\|_{L^p}
    \leq C2^{3j\delta_p} \|\dot\Delta_jS_\eps(t)Z\|_{L^2}
    \leq Ce^{-c2^{2j}t}2^{3j\delta_p}\|\dot\Delta_jZ\|_{L^2},
\end{align*}
which implies \eqref{Eq:EnergyBernstein}.  Thus, we complete the proof of this lemma.
\end{proof}

Define
\begin{equation*}
\beta(p,r):=\min\left\{\frac1r,\delta_p\right\},
\quad
a_{p,r}^{\rm w}:=3\delta_p-\frac1r-
\left(\frac1r-\delta_p\right)_+,
\quad
a_{p,r}^{\rm s}:=3\delta_p-\frac2r.
\end{equation*}
Then we have the following space-time estimates.
\begin{lem}[Dyadic space-time estimates]\label{Lem:DyadicStrichartz}
Let \(2<p<6\) and \(2<r<\infty\).  Then, it holds that for any function $Z\in L^2$,
\begin{align}
\|\dot\Delta_jS_\eps(t)Z\|_{L^r_tL^p}
&\leq C\eps^{\beta(p,r)}2^{ja_{p,r}^{\rm w}}
\|\dot\Delta_jZ\|_{L^2},
&&j\leq0,
\label{Eq:LowBlockStrichartz}\\
\|\dot\Delta_jS_\eps(t)Z\|_{L^r_tL^p}
&\leq C\eps^{\beta(p,r)}2^{ja_{p,r}^{\rm s}}
\|\dot\Delta_jZ\|_{L^2},
&&j\geq1.
\label{Eq:HighBlockStrichartz}
\end{align}
\end{lem}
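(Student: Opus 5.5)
The plan is a $TT^*$ argument at the level of a single dyadic block. We combine the pointwise-in-time kernel bounds of Lemma~\ref{Lem:DyadicDispersion} with the parabolic damping, and then apply Young's inequality in time. Fix $j$ and set $T_jZ:=\dot\Delta_jS_\eps(t)Z$, viewed as a map $L^2\to L^r_tL^p$. It suffices to bound $T_jT_j^*:L^{r'}_tL^{p'}\to L^r_tL^p$ by the square of the claimed constant.

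\textbf{Step 1: the $TT^*$ kernel.} By the explicit formula
\[
S_\eps(t)=e^{-\nu_0r^2t/2}\bigl(e^{it\Omega_\eps}\Pi_++e^{-it\Omega_\eps}\Pi_-\bigr),
\]
and since the multipliers $\Pi_\pm$ are uniformly bounded zero-order multipliers (Lemma~\ref{Lem:UniformProjection}), the operator $\dot\Delta_jS_\eps(t)S_\eps(s)^*\dot\Delta_j$ is a finite sum of blocks of the form
\[
\widetilde{\dot\Delta}_j\, e^{-\nu_0r^2(t+s)/2}\, e^{\pm i(t-s)\Omega_\eps}\,\widetilde\Pi .
\]
The argument of Lemma~\ref{Lem:DyadicDispersion} (stationary phase in the oscillation variable $t-s$, damping in $t+s\ge|t-s|$), interpolated with the trivial $L^2$ bound, gives
\[
\|\dot\Delta_jS_\eps(t)S_\eps(s)^*\dot\Delta_jG\|_{L^p}\le K_j(|t-s|)\,\|G\|_{L^{p'}}.
\]
The kernel $K_j$ is as follows:
\begin{align*}
K_j(\tau)&=C2^{6j\delta_p}e^{-c2^{2j}\tau}\min\bigl\{1,(\eps2^{-j}/\tau)^{2\delta_p}\bigr\}, && j\le0,\\
K_j(\tau)&=C2^{6j\delta_p}e^{-c2^{2j}\tau}\min\bigl\{1,(\eps2^{-2j}/\tau)^{3\delta_p}\bigr\}, && j\ge1.
\end{align*}
In each case the first alternative is the Bernstein bound $2^{3j}$ interpolated with $L^2\to L^2$. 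Hölder in $x$ followed by Young's inequality in $t$ (with $1+\frac1r=\frac1{r'}+\frac2r$) then gives
\[
\|T_j\|^2=\|T_jT_j^*\|\lesssim\|K_j\|_{L^{r/2}(\mathbb R)},
\]
which is legitimate because $r>2$.

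\textbf{Step 2: computing $\|K_j\|_{L^{r/2}}$.} Let $\tau_0$ be the crossover time, with $\tau_0=\eps2^{-j}$ for $j\le0$ and $\tau_0=\eps2^{-2j}$ for $j\ge1$. Let $T=2^{-2j}$ be the damping time; note $\tau_0\le T$ when $\eps\le1$.

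For $j\le0$, the integral $\int K_j^{r/2}$ behaves differently according to the size of $\delta_pr$.
\begin{itemize}
\item If $\delta_pr>1$, it is dominated by $\tau\lesssim\tau_0$. Taking square roots gives $\eps^{1/r}2^{j(3\delta_p-1/r)}$.
\item If $\delta_pr<1$, it is dominated by $\tau\sim T$, giving $2^{6j\delta_p r/2}\tau_0^{\delta_pr}T^{1-\delta_pr}$. Taking square roots gives $\eps^{\delta_p}2^{j(4\delta_p-2/r)}$.
\end{itemize}
Both exponents coincide with $2^{ja^{\rm w}_{p,r}}$, and the powers of $\eps$ are $\eps^{\beta(p,r)}$.

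For $j\ge1$, the analogous computation gives $\eps^{1/r}2^{j(3\delta_p-2/r)}$ when $\tfrac32\delta_pr>1$, and $\eps^{3\delta_p/2}2^{j(3\delta_p-2/r)}$ otherwise. Since $\eps\le1$, both $\eps^{1/r}$ and $\eps^{3\delta_p/2}\le\eps^{\delta_p}$ are bounded by $\eps^{\beta(p,r)}$. This yields \eqref{Eq:HighBlockStrichartz} with $a^{\rm s}_{p,r}$.

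\textbf{Main obstacle.} The delicate points are the borderline cases $\delta_pr=1$ (for $j\le0$) and $\tfrac32\delta_pr=1$ (for $j\ge1$), where $\int_{\tau_0}^T\tau^{-1}$ produces a factor $\log(T/\tau_0)=\log(2^{-j}/\eps)$ or $\log(1/\eps)$. I would handle these first by perturbing $p$ slightly and interpolating between the two neighbouring non-endpoint bounds; since $\beta$ and $a^{\rm w},a^{\rm s}$ are continuous in $(p,r)$, this should recover the stated estimate, or at worst one accepts an arbitrarily small loss in the power of $\eps$.

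A second point to verify carefully is that the $TT^*$ kernel is really controlled by a function of $|t-s|$ alone despite the non-unitary damping. This holds because the damping factor $e^{-c2^{2j}(t+s)}$ is bounded by $e^{-c2^{2j}|t-s|}$ for $t,s\ge0$, while the phase depends only on $t-s$. Since $S_\eps$ is only a semigroup, $T_j^*$ must be taken over $t\ge0$, which is consistent with this.
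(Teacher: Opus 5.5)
There is a genuine gap at the low-frequency exponent $r=1/\delta_p$. This exponent lies inside the stated range for every $p\in(2,6)$, and for $p<10/3$ it is even compatible with the index condition of Proposition~\ref{Prop:AcousticStrichartz}.

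At this exponent your kernel only satisfies $\int_0^\infty K_j^{r/2}\,d\tau\sim 2^{3j\delta_p r}\tau_0\bigl(1+\log(T/\tau_0)\bigr)$ with $T/\tau_0=2^{-j}/\eps$. Young's inequality therefore yields $\|T_j\|\lesssim \eps^{\delta_p}2^{2j\delta_p}\bigl(1+\log(2^{-j}/\eps)\bigr)^{1/r}$. Since $\beta(p,r)=1/r=\delta_p$ exactly here, there is no spare power of $\eps$ to absorb the logarithm.

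Your proposed repair does not work. Interpolate between $\delta_{p_1}=\delta_p-a$ and $\delta_{p_2}=\delta_p+b$ with $(1-\theta)a=\theta b$, or similarly between two neighbouring values of $r$. The bounds you computed then combine to $\eps^{\delta_p}2^{ja^{\rm w}_{p,r}}(\eps2^j)^{-(1-\theta)a}$, which is a power loss in both $\eps$ and $2^{-j}$. Continuity of $\beta$ and $a^{\rm w}$ does not help, because on one side of the borderline the power of $\eps$ is strictly below $\delta_p$. If you track the blowing-up constants $\Gamma(1-\delta_{p_1}r)$ and $(\delta_{p_2}r-1)^{-1}$ and optimize, you only reproduce the logarithm.

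The missing idea is to use the Hardy--Littlewood--Sobolev inequality instead of Young at this endpoint. You have $K_j(\tau)\le C2^{4j\delta_p}(\eps/\tau)^{2\delta_p}$, and $|\tau|^{-2\delta_p}$ lies in weak-$L^{r/2}$ precisely when $r=1/\delta_p$. One-dimensional HLS then gives $\|T_jT_j^*\|_{L^{r'}_tL^{p'}\to L^r_tL^p}\le C\eps^{2\delta_p}2^{4j\delta_p}$, i.e.\ $\|T_j\|\le C\eps^{\delta_p}2^{2j\delta_p}=C\eps^{\beta(p,r)}2^{ja^{\rm w}_{p,r}}$. This is the paper's endpoint estimate \eqref{Eq:WaveEndpoint}. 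The paper then reaches $r>1/\delta_p$ by interpolating with the $L^\infty_tL^p$ bound \eqref{Eq:EnergyBernstein}, reaches $r<1/\delta_p$ by H\"older against the heat factor, and argues the same way at high frequency around $r_{\rm s}=2/(3\delta_p)$.

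Away from that exponent your argument is correct, and it organises the same ingredients differently. Young's inequality with the full damped kernel produces $a^{\rm w}_{p,r}$, $a^{\rm s}_{p,r}$ and the powers of $\eps$ in one computation, with no endpoint-plus-interpolation step. At high frequency it does not even use $p<6$.

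The high-frequency borderline $\tfrac32\delta_pr=1$ that you flagged is harmless. The logarithm there is $(1+\log(1/\eps))^{1/r}$, independent of $j$, and $1/r=\tfrac32\delta_p>\delta_p=\beta(p,r)$, so $\eps^{1/r}(1+\log(1/\eps))^{1/r}\le C\eps^{\beta(p,r)}$.

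Step~1 needs a correction. $A_\eps(\xi)$ is not normal, so $\Pi_+\Pi_-^*\neq0$; it is of size $O(\alpha_\eps)=O(\eps)$. Hence $\dot\Delta_jS_\eps(t)S_\eps(s)^*\dot\Delta_j$ also contains terms with phase $e^{\pm i(t+s)\Omega_\eps}$, contrary to your claim that the phase depends only on $t-s$. These terms are harmless: they obey the dispersive bound at time $t+s$, and since your $K_j$ is non-increasing and $t+s\ge|t-s|$, they are still dominated by $K_j(|t-s|)$.
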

\begin{proof}
We first treat the low-frequency blocks $j\leq 0$. 
Define the linear propagator on a single dyadic block: for $Z\in L^2(\mathbb{R}^3)$, define
\begin{align*}
    T_j Z:=\dot\Delta_jS_\eps(t)Z.
\end{align*}
We shall prove that $T_j:L^2\to L_t^{r_{\rm w}}L^p$ is bounded at the wave endpoint $r_{\rm w}=1/\delta_p$.  
Accordingly, its adjoint operator $T_j^*:L_t^{r'_{\rm w}}L^{p'}\to L^2$. By \eqref{Eq:LowDispersion}, the composition $T_jT_j^*$ has a time-convolution kernel satisfying
 \begin{align*}
     \|K_j(t-s)\|_{L^{p'}\to L^p}
\leq C\eps^{2\delta_p}2^{4j\delta_p}|t-s|^{-2\delta_p}.
 \end{align*}
Extending functions by zero outside \((0,\infty)\), the
one-dimensional Hardy--Littlewood--Sobolev inequality gives
\begin{align*}
    \|T_jT_j^*\|_{L^{r_w'}_tL^{p'}\to L^{r_w}_tL^p}
\leq C\eps^{2\delta_p}2^{4j\delta_p}.
\end{align*}
The identity
\begin{align*}
    \|T_j\|^2_{L^2\to L_t^{r_{\rm w}}L^{p}}=
    \|T_j T_j^*\|_{L_t^{r'_{\rm w}}L^{p'}\to L_t^{r_{\rm w}}L^{p}}
\end{align*}
yields
\begin{equation}\label{Eq:WaveEndpoint}
\|\dot\Delta_jS_\eps(t)Z\|_{L_t^{r_{\rm w}}L^p}
\leq C\eps^{\delta_p}2^{2j\delta_p}
\|\dot\Delta_jZ\|_{L^2}.
\end{equation}
If \(r\geq r_{\rm w}\), interpolation with \eqref{Eq:EnergyBernstein} at \(L_t^\infty L^p\) gives
\begin{align}
    \|\dot\Delta_jS_\eps(t)Z\|_{L_t^{r}L^p}\leq C\eps^{\frac 1 r}2^{j(3\delta_p-\frac{1}{r})}\|\dot\Delta_jZ\|_{L^2}.\label{4.45.1}
\end{align}
If \(r<r_{\rm w}\), let $1/q=1/r-1/r_{\rm w}>0$, by \eqref{Eq:WaveEndpoint}, it holds that
\begin{align}    
\|\dot\Delta_jS_\eps(t)Z\|_{L_t^{r}L^p}
&=\Big\|e^{-\frac{c}{2} 2^{2j}t}  e^{-\frac{c}{2} 2^{2j}t}\frac{{S_\eps(t)}\dot\Delta_jZ}{e^{-c 2^{2j}t}}\Big\|_{L_t^{r}L^p}
\leq \|e^{-\frac{c}{2} 2^{2j}t}\|_{L_t^q}
\|{{S_\eps(t)}\dot\Delta_jZ}\|_{L_t^{r_{\rm w}}L^p}\nonumber\\
&\leq C 2^{-\frac{2j}{q}} \|S_\eps(t)\dot\Delta_jZ\|_{L_t^{r_{\rm w}}L^p}
\leq C 2^{-\frac{2j}{q}} \eps^{\delta_p}2^{2j\delta_p}
\|\dot\Delta_jZ\|_{L^2}\nonumber\\
&\leq C \eps^{\delta_p}2^{j(4\delta_p-\frac{2}{r})}\|\dot\Delta_jZ\|_{L^2}.\label{4.46.1}
\end{align}
It follows from \eqref{4.45.1}--\eqref{4.46.1} that \eqref{Eq:LowBlockStrichartz} holds.

For high-frequency region, the Schr\"odinger endpoint \(r_{\rm s}\) is determined by
\(2/r_{\rm s}=3\delta_p\).  The same \(TT^*\) argument applied to \eqref{Eq:HighDispersion} gives
\begin{align}\label{Eq:SchrodingerEndpoint}
    \|\dot\Delta_jS_\eps(t)Z\|_{L_t^{r_{\rm s}}L^p}
    \leq C\eps^{\frac{1}{r_{\rm s}}}\|\dot\Delta_jZ\|_{L^2}.
\end{align}
If \(r\geq r_{\rm s}\), interpolation with \eqref{Eq:EnergyBernstein} at \(L_t^\infty L^p\) yields
\begin{align*}    
\|\dot\Delta_jS_\eps(t)Z\|_{L_t^{r}L^p}
\leq C\eps^{\frac 1 r}2^{j(3\delta_p-\frac{2}{r})}\|\dot\Delta_jZ\|_{L^2}.
\end{align*}
If \(r< r_{\rm s}\), let $1/\tilde q=1/r-1/r_{\rm s}>0$, by virtue of \eqref{Eq:SchrodingerEndpoint} and a similar heat-factor argument as above, we obtain
\begin{align*}    
\|\dot\Delta_jS_\eps(t)Z\|_{L_t^{r}L^p}
\leq C \eps^{\frac{3\delta_p}{2}}2^{j(3\delta_p-\frac{2}{r})}\|\dot\Delta_jZ\|_{L^2}.
\end{align*}
Replacing any stronger power of \(\eps\) by
\(\eps^{\beta(p,r)}\) proves \eqref{Eq:HighBlockStrichartz}. Thus, we complete the proof of this lemma.
\end{proof}

With the low-high frequency splitting in hand, we are now ready to establish  Strichartz-type bounds for the linear semigroup $S_\eps(t)$. The homogeneous and inhomogeneous Strichartz estimates are given in the following proposition.

\begin{prop}[Homogeneous and inhomogeneous Strichartz estimates]\label{Prop:AcousticStrichartz}
Assume
\begin{equation}\label{Eq:IndexRange}
2<p<6,
\qquad 2<r<\infty,
\qquad \frac12+\frac2r<s<\frac3p.
\end{equation}
If \(M>s+a_{p,r}^{\rm s}\), then
\begin{equation}\label{Eq:HomogeneousStrichartz}
\|S_\eps(t)Z_0\|_{L^r_t\dot B^s_{p,1}}
\leq C\eps^{\beta(p,r)}
\bigl(\|Z_0^\ell\|_{\dot B^{1/2}_{2,\infty}}
+\|Z_0^h\|_{H^M}\bigr),
\end{equation}
 for any function $Z_0\in \dot B^{\frac{1}{2}}_{2,\infty}\cap H^M$, and
\begin{align}
\left\|\int_0^tS_\eps(t-\tau)Z(\tau)\,{\rm d}\tau\right\|_{L^r_t\dot B^s_{p,1}}
\leq C\eps^{\beta(p,r)}
\bigl(\|Z^\ell\|_{L^1_t\dot B^{1/2}_{2,\infty}}
+\|Z^h\|_{L^1_tH^M}\bigr),
\label{Eq:InhomogeneousStrichartz}
\end{align}
for any function $Z\in L^1(0,\infty;\dot B^{\frac{1}{2}}_{2,\infty}\cap H^M)$.
\end{prop}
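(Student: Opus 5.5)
The plan is to sum the dyadic estimates of Lemma~\ref{Lem:DyadicStrichartz} over $j$, using the $\ell^1$ structure of $\dot B^s_{p,1}$ and Minkowski's inequality in time. Write
\begin{align*}
\|S_\eps(t)Z_0\|_{L^r_t\dot B^s_{p,1}}\le \sum_{j\in\mathbb Z}2^{js}\|\dot\Delta_jS_\eps(t)Z_0\|_{L^r_tL^p},
\end{align*}
and split the sum into $j\le0$ and $j\ge1$.

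For $j\le0$, I apply \eqref{Eq:LowBlockStrichartz} and the bound $\|\dot\Delta_jZ_0\|_{L^2}\le 2^{-j/2}\|Z_0^\ell\|_{\dot B^{1/2}_{2,\infty}}$. This gives
\begin{align*}
\sum_{j\le0}2^{j(s+a^{\rm w}_{p,r}-\frac12)}\eps^{\beta(p,r)}\|Z_0^\ell\|_{\dot B^{1/2}_{2,\infty}}.
\end{align*}
The geometric sum converges iff $s+a^{\rm w}_{p,r}>1/2$. Since $a^{\rm w}_{p,r}\ge 3\delta_p-\frac2r$ in both cases, it suffices to have $s>\frac12+\frac2r-3\delta_p$. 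This is implied by the lower bound $s>\frac12+\frac2r$ in \eqref{Eq:IndexRange}, because $\delta_p>0$. The gap between these two thresholds, $3\delta_p$, is extra room; it is presumably kept so the same index range serves the later nonlinear estimates.

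For $j\ge1$, I apply \eqref{Eq:HighBlockStrichartz} together with $\|\dot\Delta_jZ_0\|_{L^2}\lesssim 2^{-jM}\|Z_0^h\|_{H^M}$. The resulting sum $\sum_{j\ge1}2^{j(s+a^{\rm s}_{p,r}-M)}$ is finite exactly because $M>s+a^{\rm s}_{p,r}$. The finitely many blocks near $j=0$ that overlap both $Z^\ell$ and $Z^h$ only change constants, as the paper already notes. This proves \eqref{Eq:HomogeneousStrichartz}.

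For the inhomogeneous estimate \eqref{Eq:InhomogeneousStrichartz}, I avoid Christ--Kiselev and use the crude but sufficient Minkowski argument. Extend $Z$ by zero for $\tau<0$ and write
\begin{align*}
\int_0^tS_\eps(t-\tau)Z(\tau)\,{\rm d}\tau=\int_0^\infty \mathbf 1_{\tau<t}\,S_\eps(t-\tau)Z(\tau)\,{\rm d}\tau .
\end{align*}
For each fixed $\tau$, the $L^r_t(\tau,\infty)$ norm of $S_\eps(t-\tau)Z(\tau)$ equals, by translation invariance in time, the homogeneous norm with datum $Z(\tau)$. Minkowski's integral inequality therefore gives
\begin{align*}
\Big\|\int_0^tS_\eps(t-\tau)Z(\tau)\,{\rm d}\tau\Big\|_{L^r_t\dot B^s_{p,1}}\le\int_0^\infty\|S_\eps(\cdot)Z(\tau)\|_{L^r_t\dot B^s_{p,1}}\,{\rm d}\tau,
\end{align*}
and \eqref{Eq:HomogeneousStrichartz} applied at each $\tau$ yields the $L^1_t$ bound. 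The same argument works blockwise, so the $\ell^1$ sum over $j$ commutes harmlessly with the $\tau$-integral.

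The main obstacle is bookkeeping rather than analysis: checking that the low-frequency exponent $s+a^{\rm w}_{p,r}-\frac12$ is positive in both regimes $r\gtrless r_{\rm w}$. In the case $r<r_{\rm w}$, $a^{\rm w}_{p,r}=4\delta_p-\frac2r$, and positivity again reduces to $s>\frac12+\frac2r-4\delta_p$, which holds. I also need the $\eps$-power to be uniformly $\eps^{\beta(p,r)}$, which holds since $\eps\le1$ and each block carries a power at least $\beta(p,r)$. The upper bound $s<3/p$ is not needed for these linear estimates; it only ensures that the norms are meaningful in $\mathcal S'_h$.
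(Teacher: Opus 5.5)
Your proposal is correct and follows essentially the same route as the paper: you sum the block bounds of Lemma~\ref{Lem:DyadicStrichartz} over $j\le0$ using $\|\dot\Delta_jZ_0\|_{L^2}\le 2^{-j/2}\|Z_0^\ell\|_{\dot B^{1/2}_{2,\infty}}$ and the positivity of $s+a^{\rm w}_{p,r}-\frac12$, sum over $j\ge1$ using the $H^M$ norm with $M>s+a^{\rm s}_{p,r}$, and get the inhomogeneous estimate from Minkowski's inequality and time-translation invariance. The only cosmetic difference is that you bound each high-frequency block pointwise by $2^{-jM}\|Z_0^h\|_{H^M}$ rather than applying Cauchy--Schwarz over $j$; this gives the same condition on $M$.
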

\begin{proof}
For the low-frequency part of $Z_0$, by \eqref{Eq:LowBlockStrichartz}, we have
\begin{align*}
        \|S_\eps(t)Z_0^\ell\|_{L^r_t\dot B_{p,1}^s}
        &\leq \sum_{j\leq 0} \|S_\eps(t) \dot\Delta_jZ_0^\ell\|_{L^r_t\dot B_{p,1}^s}\\
        &\leq C\eps^{\beta(p,r)} \sum_{j\leq 0}2^{j(s+a_{p,r}^{\rm w})}\|\dot\Delta_jZ_0^\ell\|_{L^2}\\
        &\leq C\eps^{\beta(p,r)} \sum_{j\leq 0}2^{j(s+a_{p,r}^{\rm w}-1/2)}\|Z_0^\ell\|_{\dot B^{1/2}_{2,\infty}}.
\end{align*}
By virtue of \eqref{Eq:IndexRange}, we have
\[
s+a_{p,r}^{\rm w}-\frac{1}{2} >
\begin{cases}
\dfrac{1}{r}+3\delta_p, & \text{if } \dfrac{1}{r} \le \delta_p,\\[6pt]
4\delta_p, & \text{if } \dfrac{1}{r} > \delta_p.
\end{cases}
\]
Consequently, \(s+a_{p,r}^{\rm w}-1/2 > 0\) in both scenarios, and thus the geometric series
\[
\sum_{j\le 0} 2^{j\left(s+a_{p,r}^{\rm w}-\frac{1}{2}\right)}
\]
converges.

For the high-frequency part of $Z_0$, by virtue of \eqref{Eq:HighBlockStrichartz} and Cauchy--Schwarz inequality, we have
    \begin{align*}        
    \|S_\eps(t)Z_0^h\|_{L^r_t\dot B_{p,1}^s}        
    &\leq \sum_{j> 0} \|S_\eps(t) \dot\Delta_jZ_0^h\|_{L^r_t\dot B_{p,1}^s}\\        
    &\leq C\eps^{\beta(p,r)} \sum_{j\leq 0}2^{j(s+a_{p,r}^{\rm s})}\|\dot\Delta_jZ_0^h\|_{L^2}\\        
    &\leq C\eps^{\beta(p,r)} (\sum_{j\leq 0}2^{2j(s+a_{p,r}^{\rm s}-M)})^{\frac{1}{2}}\|Z_0^h\|_{H^M}.    
    \end{align*}
Obviously, since \(M>s+a_{p,r}^{\rm s}\), the geometric series $$\sum_{j\leq 0}2^{2j(s+a_{p,r}^{\rm s}-M)}$$ 
converges. Based on the above analysis for the low- and high-frequency parts of $Z_0$, we obtain \eqref{Eq:HomogeneousStrichartz}.  
    
In addition, Minkowski's inequality and time-translation invariance give  
    \begin{align*}
        \left\|\int_0^tS_\eps(t-\tau)Z(\tau)\,{\rm d}\tau\right\|_{L^r_t\dot B^s_{p,1}}
        &\leq \int_0^\infty \|S_\eps(t-\tau)Z(\tau)\|_{L^r_t\dot B^s_{p,1}}{\rm d}\tau\\
        &\leq C\eps^{\beta(p,r)}\bigl(\|Z^\ell\|_{L^1_t\dot B^{1/2}_{2,\infty}}+\|Z^h\|_{L^1_tH^M}\bigr),
    \end{align*}
    which yields \eqref{Eq:InhomogeneousStrichartz}. Thus, we complete the proof of this proposition.
\end{proof}

Since the high-frequency stationary-coefficient source does not belong to $L^1_t H^M$, we employ the following damped estimate instead. Although the constant \(C\) is independent of \(\eps\), the estimate contains the explicit loss \(\eps^{-3\delta_p}\). This loss is
essential for treating the high-frequency stationary-coefficient
source, which is not integrable in time.
\begin{lem}[High-frequency damped estimate]\label{Lem:HighDampedEstimate}
Let \(2\leq p<\infty\), \(1\leq r\leq\infty\), and \(s\in\mathbb R\).  Then, for any function $Z\in L^r_t\dot B^{s-2}_{p,1}$, we have
\begin{equation}\label{Eq:HighDampedEstimate}
\left\|\int_0^tS_\eps(t-\tau)Z^h(\tau)\,d\tau\right\|_{L^r_t\dot B^s_{p,1}}
\leq C\eps^{-3\delta_p}
\|Z^h\|_{L^r_t\dot B^{s-2}_{p,1}}.
\end{equation}
\end{lem}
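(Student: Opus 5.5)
The plan is to work block by block and combine two bounds on the dyadic piece \(\dot\Delta_j S_\eps(t)\): a dispersive bound that is good for large \(t\), and a pure heat-damping bound that is good for small \(t\). Each bound is integrated in time separately.

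\textbf{Step 1: the per-block kernel bound.} Fix \(j\geq1\). By \eqref{Eq:EnergyBernstein} with \(p=2\), or directly from Lemma~\ref{Lem:UniformProjection}, the operator \(\dot\Delta_jS_\eps(t)\) is bounded on \(L^2\) with norm \(Ce^{-c2^{2j}t}\).

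I claim the same bound holds on \(L^p\) for every \(2\le p<\infty\), uniformly in \(\eps\), up to a loss that I will track. The route is through \(L^1\)-bounds on the kernel \(K_j(t)\). One has \(\|K_j(t)\|_{L^1}\lesssim 2^{3j}\|K_j(t)\|_{L^\infty}\cdot|\text{effective support}|\), but this is awkward for the Schr\"odinger-type phase. Instead I would interpolate \eqref{Eq:HighDispersion}, which is an \(L^{p'}\to L^p\) estimate, with the trivial bound.

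The cleaner approach is to use the operator \(L^{p'}\to L^p\) bound together with Bernstein's inequality on the input. Bernstein gives
\begin{align*}
\|\dot\Delta_jZ\|_{L^{p'}}\lesssim 2^{3j(1/p'-1/p)}\|\dot\Delta_jZ\|_{L^p}\quad\text{(fails: }p'<p\text{)}.
\end{align*}
This direction fails, so I would argue instead as follows. Split time at \(t_j:=\eps2^{-2j}\).

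For \(t\geq t_j\), \eqref{Eq:HighDispersion} together with Bernstein's inequality in the reverse direction is unavailable, so I estimate \(L^p\to L^p\) by writing the kernel as a Schr\"odinger-type kernel at scale \(2^{-j}\) with oscillation parameter \(\tau=t2^{2j}/\eps\). Its \(L^1\) norm is then at most \(C\tau^{3/2}\) times the heat factor. This yields
\begin{align*}
\|\dot\Delta_jS_\eps(t)\|_{L^p\to L^p}\lesssim(1+t2^{2j}/\eps)^{3\delta_p}e^{-c2^{2j}t},
\end{align*}
where the exponent \(3\delta_p\) is obtained by Riesz--Thorin interpolation between \(L^2\), where the bound is \(1\), and \(L^\infty\), where the \(L^1\)-kernel bound is \(\tau^{3/2}\).

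\textbf{Step 2: absorbing the \(\tau\)-growth.} Next I bound
\begin{align*}
(1+t2^{2j}/\eps)^{3\delta_p}e^{-c2^{2j}t}\le \eps^{-3\delta_p}(1+t2^{2j})^{3\delta_p}e^{-c2^{2j}t}\lesssim \eps^{-3\delta_p}e^{-\frac c2 2^{2j}t}.
\end{align*}
This is exactly where the loss \(\eps^{-3\delta_p}\) comes from. It gives
\begin{align*}
\|\dot\Delta_jS_\eps(t)Z\|_{L^p}\le C\eps^{-3\delta_p}e^{-c2^{2j}t}\|\dot\Delta_jZ\|_{L^p}.
\end{align*}

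\textbf{Step 3: summing in time and frequency.} Apply \(\dot\Delta_j\) to the Duhamel integral and multiply by \(2^{js}\). This gives
\begin{align*}
2^{js}\Big\|\dot\Delta_j\int_0^tS_\eps(t-\tau)Z^h\,d\tau\Big\|_{L^p}\le C\eps^{-3\delta_p}\int_0^t 2^{2j}e^{-c2^{2j}(t-\tau)}\,2^{j(s-2)}\|\dot\Delta_jZ^h(\tau)\|_{L^p}\,d\tau.
\end{align*}
Young's inequality in time, with \(\|2^{2j}e^{-c2^{2j}\cdot}\|_{L^1_t}\le C\), gives the \(L^r_t\) bound for each block. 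Since the time-norm is taken after the \(\ell^1\) sum in \(j\), I then apply Minkowski's inequality to bring \(\sum_j\) outside the \(L^r_t\) norm. Summing over \(j\geq1\) (only high blocks appear because of \(Z^h\)) yields \eqref{Eq:HighDampedEstimate}.

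\textbf{Main obstacle.} The main obstacle is Step 1: proving a uniform \(L^p\to L^p\) bound for the dispersive block multiplier with only polynomial growth in \(\tau\). I would first try a direct \(L^1\) kernel estimate. Rescaling by \(\xi=2^j\eta\) reduces the problem to bounding \(\|\mathcal F^{-1}(\varphi(\eta)e^{i\tau\phi_{\eps,j}(\eta)}\Pi_\pm)\|_{L^1}\). For this I would use Sobolev-norm control of the symbol, namely the \(H^2\)-in-\(\eta\) weighted bound \(\|(1+|x|^2)^{-1}\|_{L^2}\|(1-\Delta_\eta)m\|_{L^2}\). Since each \(\eta\)-derivative of \(e^{i\tau\phi}\) costs a factor \(\tau\), this gives growth \(\lesssim\tau^2\), which is cruder than \(\tau^{3/2}\). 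A crude bound of this kind still suffices, because any power of \(\tau\) is absorbed by the heat factor at the price of \(\eps^{-\text{power}}\). To obtain the sharp power \(3\delta_p\), I would interpolate the \(L^1\)-kernel bound \(\lesssim\tau^{3/2}\), which is the stationary-phase scale of the effective support \(|x|\lesssim\tau\) combined with \(|K|\lesssim\tau^{-3/2}\), against the \(L^2\) isometry.
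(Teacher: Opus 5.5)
Your Steps 1--2 are the paper's argument. The rescaled kernel has $L^1$ norm $\lesssim(1+\tau)^{3/2}$, Riesz--Thorin against $L^2$ gives $(1+t2^{2j}/\eps)^{3\delta_p}e^{-c2^{2j}t}$, and the heat factor absorbs this at the price $\eps^{-3\delta_p}$. Your pointwise absorption is equivalent to the paper's $\int_0^\infty e^{-c2^{2j}t}(1+t2^{2j}/\eps)^{3\delta_p}\,dt\le C2^{-2j}\eps^{-3\delta_p}$.

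\textbf{The gap is in Step 3, at the same place as the paper's last display.} Set $Y(t):=\int_0^tS_\eps(t-\tau)Z^h(\tau)\,d\tau$. Young per block and Minkowski on the left give
\[
\|Y\|_{L^r_t\dot B^s_{p,1}}\le C\eps^{-3\delta_p}\sum_{j\geq1}2^{j(s-2)}\|\dot\Delta_jZ^h\|_{L^r_tL^p}=C\eps^{-3\delta_p}\|Z^h\|_{\widetilde L^r_t\dot B^{s-2}_{p,1}},
\]
which is a Chemin--Lerner norm. The same Minkowski inequality shows $\|Z^h\|_{L^r_t\dot B^{s-2}_{p,1}}\le\|Z^h\|_{\widetilde L^r_t\dot B^{s-2}_{p,1}}$, so on the right it goes the wrong way. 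For $r>1$ the reverse inequality fails: let blocks $j=1,\dots,n$ be active on disjoint unit time intervals, and the ratio is $n^{1-1/r}$. So ``summing over $j$'' proves \eqref{Eq:HighDampedEstimate} only for $r=1$. This is not cosmetic: \eqref{Eq:StationaryHighSource} and the absorption of $X_\eps$ in Proposition~\ref{Prop:AcousticConvergence} use the genuine $L^r_t\dot B$ norms.

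\textbf{Fix: sum in $j$ before taking the time norm, then use a maximal function.} Step 2 gives, for every $t$,
\[
\|Y(t)\|_{\dot B^s_{p,1}}\le C\eps^{-3\delta_p}\sum_{j\geq1}(k_j*a_j)(t),\qquad a_j(\tau):=2^{j(s-2)}\|\dot\Delta_jZ^h(\tau)\|_{L^p},
\]
where $k_j(\sigma):=2^{2j}e^{-\frac c2 2^{2j}\sigma}$ for $\sigma>0$ and $k_j(\sigma):=0$ otherwise. Let $\mathcal M$ be the Hardy--Littlewood maximal operator. Each $k_j$ is dominated by the even decreasing kernel $2^{2j}e^{-\frac c2 2^{2j}|\sigma|}$, whose $L^1$ norm is $4/c$. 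Hence for $0\le g\in L^{r'}(0,\infty)$, Fubini gives
\[
\int_0^\infty\sum_j(k_j*a_j)(t)\,g(t)\,dt=\int_0^\infty\sum_ja_j(\tau)\int_\tau^\infty k_j(t-\tau)g(t)\,dt\,d\tau\le C\int_0^\infty\Big(\sum_ja_j(\tau)\Big)\mathcal Mg(\tau)\,d\tau.
\]
Since $\mathcal M$ is bounded on $L^{r'}$ for $r'>1$ and $\sum_{j\geq1}a_j(\tau)\le\|Z^h(\tau)\|_{\dot B^{s-2}_{p,1}}$, duality gives \eqref{Eq:HighDampedEstimate} for every $1\le r<\infty$. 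That is the only range used later.

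At $r=\infty$ the majorant bound itself fails. Take $a_j$ to be the indicator of $[1-2\cdot4^{-j},1-4^{-j}]$; these intervals are pairwise disjoint, so $\sum_ja_j\le1$. But $(k_j*a_j)(1)=\int_1^2e^{-\frac c2 v}\,dv$ for every $j$, so the sum diverges. The analogous heat-equation estimate is false for the same reason. The endpoint $r=\infty$ should therefore be dropped, or stated with $\widetilde L^\infty_t$ on the right.
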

\begin{proof}
For $j\geq 1$, we proceed analogously to the proof of Lemma \ref{Lem:DyadicDispersion}. Under the scaling $\xi=2^j\eta$, we obtain an oscillatory parameter \(\tau=t2^{2j}/\eps\) and a family of compactly supported phases
\begin{align*}    
\phi_{\eps,j}(\eta)=|\eta|\bigl(l_\eps|\eta|^2+\rho_\infty\gamma_0 2^{-2j}\bigr)^{\frac{1}{2}}
\end{align*}
with uniformly nondegenerate Hessians. Moreover,
    \begin{align*}
        K_j^h(t,x)&=\int_{\mathbb{R}^3} e^{ix\cdot \xi} e^{-\frac{1}{2}\nu_0 |\xi|^2 t}\big(e^{it\Omega_\eps(|\xi|)}\Pi_{+} +e^{-it\Omega_\eps(|\xi|)}\Pi_{-}\big)\varphi(2^{-j}\xi) {\rm d}\xi\\
        &=2^{3j}\int_{\mathbb{R}^3} e^{i2^jx\cdot \eta} e^{-\frac{1}{2}\nu_0 |\eta|^2 \eps\tau}\big(e^{i\tau\phi_{\eps,j}(\eta)}\Pi_{+} +e^{-i\tau\phi_{\eps,j}(\eta)}\Pi_{-}\big)\varphi(\eta) {\rm d}\eta\\
        &=2^{3j}e^{-\frac{1}{2}\nu_0 |\eta|^2 \eps\tau}\big(\mathcal{K}_j^+(\tau,2^j x)+\mathcal{K}_j^-(\tau,2^j x)\big),
    \end{align*}
    where 
    \begin{align*}
        \mathcal{K}_j^{\pm}(\tau,y):=\int_{\mathbb{R}^3} e^{i(y\cdot \eta\pm \tau\phi_{\eps,j}(\eta))}a^\pm_{\eps,j}(\eta)d\eta.
    \end{align*}
    The amplitudes $a^\pm_{\eps,j}(\eta)$ are supported in the annulus $\frac{1}{2}\leq |\eta|\leq 2$ and are uniformly bounded in $C^M$  {with respect to both $\eps$ and $j$} for any fixed $M\geq 0$.
    
    We first estimate the spatial $L^1$ norm of the normalized kernel $\mathcal{K}_j^{\pm}(\tau,\cdot)$. 
    If $0\leq\tau\leq1$, then integration by parts gives
    \begin{align}
        (1+|y|^2)^M \mathcal{K}_j^{\pm}(\tau,y)=\int_{\mathbb{R}^3}(1-\Delta_\eta)^M(e^{\pm i\tau\phi_{\eps,j}(\eta)}a^\pm_{\eps,j}(\eta)){\rm d}\eta,\label{4.52.1}
    \end{align}
    for any integer $M\geq 1$. Since $0\leq \tau\leq 1$, the integrand on the right-hand side of \eqref{4.52.1} is uniformly bounded in $L^1_\eta$  {with respect to both $\eps$ and $j$}, which implies
    \begin{align*}
        |\mathcal{K}_j^{\pm}(\tau,y)|\leq C_M(1+|y|^2)^{-M}.
    \end{align*}
    Taking $M>3/2$, we obtain
    \begin{align*}
        \sup_{0\leq \tau\leq 1}\|\mathcal{K}_j^{\pm}(\tau,\cdot)\|_{L^1_y}\leq C.
    \end{align*}
    
    For $\tau\geq 1$, since $\nabla_\eta (y\cdot \eta\pm \tau\phi_{\eps,j}(\eta))=0$ is equivalent to $y=\mp \tau\nabla_\eta\phi_{\eps,j}(\eta)$, the stationary phase gives a pointwise size \(C\tau^{-3/2}\) in a region of volume \(C\tau^3\). Outside this region, repeated integration by parts yields rapid decay.  Hence, the normalized kernel has \(L^1\) norm at most \(C\tau^{3/2}\).
  Thus, for any $\tau\geq 0$, we have
    \begin{align}\label{4.53.1}
        \|\mathcal{K}_j^{\pm}(\tau,\cdot)\|_{L^1_y}\leq C(1+\tau)^{\frac{3}{2}}.
    \end{align}

Applying \eqref{4.53.1} and Young's convolution inequality in space, we get
    \begin{align*}
        \|\dot\Delta_jS_\eps(t)Z\|_{L^\infty}    \leq Ce^{-c2^{2j}t}    \left(1+\frac{t2^{2j}}\eps\right)^{\frac{3}{2}}    \|\dot\Delta_jZ\|_{L^\infty}.
    \end{align*}
Interpolating with the \(L^2\) operator estimate yields
    \begin{equation*}
    \|\dot\Delta_jS_\eps(t)Z\|_{L^p}
    \leq Ce^{-c2^{2j}t}
    \left(1+\frac{t2^{2j}}\eps\right)^{3\delta_p}
    \|\dot\Delta_jZ\|_{L^p}.
    \end{equation*}
In addition, observe that
    \begin{align*}
        \int_0^\infty e^{-c2^{2j}t}    \left(1+\frac{t2^{2j}}\eps\right)^{3\delta_p} {\rm d}t
        =2^{-2j}\int_0^\infty e^{-cs}    \left(1+\frac{s}\eps\right)^{3\delta_p} {\rm d}s
        \leq C2^{-2j} \eps^{-3\delta_p}.
    \end{align*}
Combining the above three estimates with Young's convolution inequality in time and summing over \(j\geq1\),  we arrive at    \begin{align*}
        \left\|\int_0^t S_\eps(t-\tau)Z^h(\tau){\rm d}\tau\right\|_{L^r_t\dot B^s_{p,1}}
        \leq C \eps^{-3\delta_p}\sum_{j\geq 1}2^{j(s-2)}\|\dot \Delta_j Z^h\|_{L^r_t L^p_x}
        =C \eps^{-3\delta_p}\|Z^h\|_{L^r_t\dot B^{s-2}_{p,1}},
    \end{align*}
    which implies \eqref{Eq:HighDampedEstimate}. The proof of this lemma is complete.  
\end{proof}

\subsection{Low Mach number  convergence analysis}\label{S4.4}
{With the uniform-in-$\epsilon$ estimates and the dispersive Strichartz estimates for the acoustic-capillary semigroup in hand, we now pay attention to the asymptotic behaviour of the solution $(\rho_\epsilon,u_\epsilon)$ as the Mach number \(\epsilon\) tends to zero.
We establish the convergence rates for both the compressible acoustic component and the incompressible projected part, which constitute the main conclusions of Theorem~\ref{NNSKThm-intro}.}

We first analyze the convergence of the compressible acoustic component.
Applying \(\mathbb Q\) to \eqref{Eq:MainConCoeffEq} and replacing $(w,d)$ in \eqref{Eq:qdDef} by $(w_\epsilon,d_\epsilon)$,
we obtain
\begin{equation*}
\left\{
\begin{aligned}
&\partial_t\sigma_\eps+\frac{\rho_\infty}{\eps}\Lambda d_\eps=f_\eps,\\
&\partial_td_\eps-\nu_0\Delta d_\eps
-\frac1\eps\Lambda\K\sigma_\eps
=\Lambda^{-1}\operatorname{div}\mathbb Q\mathcal G,
\end{aligned}
\right.
\end{equation*}
where
\begin{align*}
f_\eps:=-\operatorname{div}H.    
\end{align*}
Set
\begin{equation}\label{Eq:AcousticSymmetricUnknown}
V_\eps:=\begin{pmatrix}\K^{1/2}\sigma_\eps\\[1mm]
\sqrt{\rho_\infty}\,d_\eps\end{pmatrix},
\qquad
\Phi_\eps:=\begin{pmatrix}\K^{1/2}f_\eps\\[1mm]
\sqrt{\rho_\infty}\Lambda^{-1}\operatorname{div}\mathbb Q\mathcal G
\end{pmatrix}.
\end{equation}
Then
\begin{equation}\label{Eq:AcousticDuhamel}
V_\eps(t)=S_\eps(t)V_{\eps,0}
+\int_0^tS_\eps(t-\tau)\Phi_\eps(\tau)\,{\rm d}\tau.
\end{equation}
Based on \eqref{MainEng-intro}, \eqref{Eq:HmDecay}, and \eqref{TimeDecay}, we first establish the following time-integrability results.
\begin{lem}\label{Lem:TimeIntegrability}
For \(N\geq 5\), we have
\begin{equation}\label{Eq:TimeIntegrability1}
\int_0^\infty
\bigl(\|U_\eps(t)\|_{\dot H^N}+\|U_\eps(t)\|_{L^\infty}\bigr)\,{\rm d}t
\leq C\delta_0,
\end{equation}
and
\begin{equation}\label{Eq:TimeIntegrability2}
\int_0^\infty\|U_\eps(t)\|_{L^\infty}
\|\nabla U_\eps(t)\|_{H^N}\,{\rm d}t
\leq C\delta_0^2.
\end{equation}
\end{lem}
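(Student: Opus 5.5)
We will split $[0,\infty)$ into $[0,1]$ and $[1,\infty)$. On $[0,1]$ the uniform bound \eqref{MainEng-intro} suffices. Since $N\ge5$, we have $H^N\hookrightarrow L^\infty$, so
\[
\sup_{t}\bigl(\|U_\eps\|_{\dot H^N}+\|U_\eps\|_{L^\infty}\bigr)\le C\delta_0 .
\]
Hence the contribution of $[0,1]$ to \eqref{Eq:TimeIntegrability1} is at most $C\delta_0$. On $[1,\infty)$ we will use the decay estimates and check that the exponents exceed $1$.

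For $\|U_\eps\|_{\dot H^N}$ we invoke \eqref{Eq:HmDecay} with $m=N$. Its derivation uses only \eqref{Eq:HomEnergyLevel} at level $m$ together with the low-frequency bound $A\le C\delta_0$, and both hold for every $1\le m\le N$. This gives decay $(1+t)^{-(N-1/2)/2}$. Since $(N-1/2)/2\ge 9/4>1$ for $N\ge5$, the function is integrable.

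For $\|U_\eps\|_{L^\infty}$ we use the embedding $\dot B^{3/2}_{2,1}\hookrightarrow \dot B^0_{\infty,1}\hookrightarrow L^\infty$ from Proposition~\ref{BesovP}(iv). Then \eqref{TimeDecay} with $s=3/2$ gives only $(1+t)^{-1/2}$, which is not integrable. So we will instead bound $L^\infty$ by interpolation. Write
\[
\|U_\eps\|_{L^\infty}\lesssim \sum_j 2^{3j/2}\|\dot\Delta_jU_\eps\|_{L^2}.
\]
We split at a frequency $2^J$. The low part is controlled by $\|U_\eps\|_{\dot B^{1/2}_{2,\infty}}$ and the high part by $\|U_\eps\|_{\dot B^{s}_{2,1}}$ with $s>3/2$. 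More simply, use the inequality $\|z\|_{L^\infty}\lesssim \|z\|_{\dot B^{s_1}_{2,1}}^{\theta}\|z\|_{\dot B^{s_2}_{2,1}}^{1-\theta}$ with $s_1<3/2<s_2$, which is a direct consequence of Proposition~\ref{BesovP}(iii) and the above embedding. Taking $s_2=s\in(5/2,N-2)$ in \eqref{TimeDecay} gives decay $(1+t)^{-(s-1/2)/2}$ with exponent $>1$. This is the point of the remark about $s\in(5/2,N-2)$, and it is the step I expect to need care.

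Concretely, I would not interpolate but bound
\[
\|U_\eps\|_{L^\infty}\lesssim \|U_\eps^\ell\|_{L^\infty}+\|U_\eps^h\|_{L^\infty}.
\]
For the high part, $\|U_\eps^h\|_{L^\infty}\lesssim \|U_\eps^h\|_{\dot B^{3/2}_{2,1}}\lesssim \|U_\eps\|_{\dot B^{s}_{2,1}}$ for any $s\ge3/2$ (restricting to $j\ge1$). With $s>5/2$ this decays faster than $(1+t)^{-1}$. For the low part, the $\dot B^{3/2}_{2,1}$ norm restricted to $j\le0$ decays only like $t^{-1/2}$ if we rely solely on the $\dot B^{1/2}_{2,\infty}$ bound. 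This is the genuine obstacle.

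To overcome it, we will exploit the heat-type dissipation \eqref{dyaest} blockwise. Each low block satisfies $\frac{d}{dt}E_j+c2^{2j}E_j\le$ sources. With the source estimates of Lemmas~\ref{Hest}--\ref{Gest} (small coefficient times $\dot B^{3/2}$ quantities), Duhamel in each block gives
\[
\|\dot\Delta_jU_\eps(t)\|_{L^2}\lesssim e^{-c2^{2j}t}2^{-j/2}\delta_0+\cdots .
\]
Then $\int_0^\infty\sum_{j\le0}2^{3j/2}\|\dot\Delta_jU_\eps\|_{L^2}\,dt$ is bounded by $\sum_{j\le0}2^{3j/2}2^{-2j}2^{-j/2}\delta_0=\sum_{j\le0}2^{-j}\delta_0$. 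This diverges, so an $L^1_t$ low-frequency bound in $\dot B^{3/2}_{2,1}$ from $\dot B^{1/2}_{2,\infty}$ data is borderline false in general.

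I therefore expect the intended argument to be the interpolation/decay route. Using $\|U_\eps\|_{L^\infty}\lesssim\|U_\eps\|_{\dot B^{3/2}_{2,1}}$ and \eqref{TimeDecay} yields $t^{-1/2}$. If the authors instead combine $L^\infty\lesssim \|U\|_{L^2}^{1/4}\|\nabla^2 U\|_{L^2}^{3/4}$-type bounds, the low part still limits decay. So the hardest step is genuinely obtaining integrable decay of $\|U_\eps\|_{L^\infty}$. My first attempt would be to use the Gagliardo--Nirenberg bound
\[
\|U\|_{L^\infty}\lesssim \|U\|_{\dot B^{1/2}_{2,\infty}}^{1-\theta}\|U\|_{\dot H^m}^{\theta}
\]
with $\theta=1/(m-1/2)$ and $m$ large. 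This gives decay rate $\theta(m-1/2)/2=1/2$ regardless of $m$. Failing that, I would seek extra time-integrability from $\int\mathcal D_N^2<\infty$ via Cauchy--Schwarz.

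Finally, for \eqref{Eq:TimeIntegrability2} we will apply Cauchy--Schwarz in time:
\[
\int_0^\infty\|U_\eps\|_{L^\infty}\|\nabla U_\eps\|_{H^N}\,dt\le \|U_\eps\|_{L^2_tL^\infty}\,\|\mathcal D_N\|_{L^2_t}.
\]
The second factor is bounded by $C\delta_0$ through \eqref{MainEng-intro}. For the first factor we need only $L^2$ integrability of $\|U_\eps\|_{L^\infty}$, and the $t^{-1/2}$-type decay narrowly fails here too. So we will bound the first factor via the low–high split. The high part is square-integrable by the decay at rate $s>5/2$. The low part satisfies
\[
\|U^\ell\|_{L^2_t\dot B^{3/2}_{2,1}}\lesssim\sum_{j\le0}2^{3j/2}2^{-j}2^{-j/2}\delta_0 .
\]
This is again logarithmically borderline, and we would use the $\dot B^{1/2}_{2,\infty}$–$\ell^\infty$ structure plus maximal regularity, $L^2_t\dot B^{3/2}_{2,\infty}$, to close it.
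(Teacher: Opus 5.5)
The paper's proof takes exactly the route you set aside. It chooses $s\in(5/2,N-2)$ in \eqref{TimeDecay} and asserts $\|U_\eps(t)\|_{L^\infty}\le C_s\|U_\eps(t)\|_{\dot B^s_{2,1}}$, which gives the integrable rate $(1+t)^{-(s-1/2)/2}$. It then gets $U_\eps\in L^2_tL^\infty$ from the same bound and closes \eqref{Eq:TimeIntegrability2} by Cauchy--Schwarz, as you do.

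Your objection to this is correct. In $\mathbb R^3$ the homogeneous space $\dot B^s_{2,1}$ embeds into $L^\infty$ only for $s=3/2$; replace $u$ by $u(\lambda\cdot)$ and let $\lambda\to0$ to see this. So \eqref{TimeDecay} only yields $\|U_\eps(t)\|_{L^\infty}\lesssim(1+t)^{-1/2}\delta_0$, and the paper's argument for the $L^\infty$ part of \eqref{Eq:TimeIntegrability1} does not stand.

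It cannot be repaired either. Take $F=0$, so $\rho_\eps^*=\rho_\infty$ and $u_\eps^*=0$. At the linear level, $\sigma_{\eps,0}=0$ with divergence-free $w_{\eps,0}$ gives $\sigma_\eps\equiv0$ and $w_\eps=e^{\bar\mu t\Delta}w_{\eps,0}$. Choose $w_{\eps,0}=\delta\,\nabla\times(\langle x\rangle^{1-a}e_3)$ with $3/2<a<2$. These data lie in $H^N\subset\dot B^{1/2}_{2,\infty}$, but $\|w_\eps(t)\|_{L^\infty}\sim\delta\, t^{-a/2}$ for large $t$, which is not integrable.

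So on this point your proposal is no worse than the paper: the $L^1_tL^\infty$ claim has to be dropped or weakened. Only the $\dot H^N$ part of \eqref{Eq:TimeIntegrability1} survives, and you treat it exactly as the paper does, via \eqref{Eq:HmDecay} with $m=N$.

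Where your proposal has a genuine gap is \eqref{Eq:TimeIntegrability2}. That bound is true and needs no decay at all. Use the embedding $\dot B^{3/2}_{2,1}\hookrightarrow L^\infty$ and Proposition~\ref{BesovP}(iii) with $s_1=1$, $s_2=2$, $\theta=1/2$:
\[
\|U_\eps\|_{L^\infty}\lesssim\|U_\eps\|_{\dot H^1}^{1/2}\|U_\eps\|_{\dot H^2}^{1/2}\le C\|\nabla U_\eps\|_{H^N}=C\mathcal D_N .
\]
Hence the integral in \eqref{Eq:TimeIntegrability2} is at most $C\int_0^\infty\mathcal D_N(t)^2\,{\rm d}t\le C\delta_0^2$ by \eqref{MainEng-intro}. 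Your closing remark about extracting integrability from $\int\mathcal D_N^2$ pointed the right way, but you did not carry it out.

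Your low/high bookkeeping fed only the $\dot B^{1/2}_{2,\infty}$ bound into the low frequencies, which is why everything looked borderline. The dissipation already controls $\dot H^1\cap\dot H^2$ at all frequencies. The closure you proposed via $L^2_t\dot B^{3/2}_{2,\infty}$ would not have worked in any case, since $\dot B^{3/2}_{2,\infty}\not\hookrightarrow L^\infty$.
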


\begin{proof}
Choosing \(s\in(5/2,N-2)\) in \eqref{TimeDecay} and using the Besov embedding theorem, we obtain
\begin{align}\label{U_eps_L_infty}
    \|U_\eps(t)\|_{L^\infty}\leq C_s\|U_\eps(t)\|_{\dot B^s_{2,1}}
    \leq C_s(1+t)^{-(s-1/2)/2}\delta_0,
\end{align}
which implies \(U_\eps\in L^1_tL^\infty_x\).  
Taking $m=N$ in \eqref{Eq:HmDecay} gives
\begin{align}
    \|U_\eps(t)\|_{\dot H^N}
\leq C(1+t)^{-(N-1/2)/2}\delta_0,
\end{align}
which yields \(U_\eps\in L^1_tH^N\).  Thus, \eqref{Eq:TimeIntegrability1} holds.

Meanwhile, the estimate \eqref{U_eps_L_infty} also implies
\(U_\eps\in L^2_tL^\infty_x\). By virtue of the dissipative estimate \eqref{MainEng-intro} and Cauchy--Schwarz inequality, we obtain
\begin{align*}
    \int_0^\infty\|U_\eps(t)\|_{L^\infty}\|\nabla U_\eps(t)\|_{H^N}\,{\rm d}t
    \leq \|U_\eps\|_{L^2_tL^\infty} \|\nabla U_\eps\|_{L^2_t H^N}
    \leq C\delta_0^2,
\end{align*}
which is \eqref{Eq:TimeIntegrability2}. Thus, we complete the proof of this lemma.
\end{proof}

Recall the definitions of $\Psi$, $\Phi$, and $\gamma_0$ in \eqref{def-Psi-Phi-intro}:
\begin{align*}
    \Psi(\rho)=\rho^{-1},\quad \Phi(\rho)=\frac{p'(\rho)}{\rho}, \quad \gamma_0=\Phi(\rho_\infty)>0.
\end{align*}
Define \(\bar\Psi:=\rho_\infty^{-1}\), and split $f_\epsilon$ 
and $\mathcal{G}$ into
\begin{equation*}
f_\eps=f_\eps^{\rm d}+f_\eps^{\rm s},
\qquad \mathcal G=\mathcal G^{\rm d}+\mathcal G^{\rm s},
\end{equation*}
where
\begin{align*}
f_\eps^{\rm d}:=&-\operatorname{div}\bigl(\sigma_\eps(u_\eps^*+w_\eps)\bigr),\quad
f_\eps^{\rm s}:=-\eps\operatorname{div}(\sigma_\eps^*w_\eps),\\
\mathcal G^{\rm d}:=&-u_\eps^*\cdot\nabla w_\eps
-w_\eps\cdot\nabla u_\eps^*-w_\eps\cdot\nabla w_\eps
+\bigl(\Psi(\rho_\eps)-\Psi(\rho_\eps^*)\bigr)
\A(u_\eps^*+w_\eps)
\\
&-\frac{\Phi(\rho_\eps)-\Phi(\rho_\eps^*)}{\eps}
\nabla\sigma_\eps
-\bigl(\Phi(\rho_\eps)-\Phi(\rho_\eps^*)\bigr)
\nabla \sigma_\eps^*,
\\
\mathcal G^{\rm s}:=&
\bigl(\Psi(\rho_\eps^*)-\bar\Psi\bigr)\A w_\eps
-\frac{\Phi(\rho_\eps^*)-\gamma_0}{\eps}\nabla\sigma_\eps.
\end{align*}
Since $p\in C^\infty$, $\rho_\infty>0$, and $\rho_\eps^*=\rho_\infty+\eps^2\sigma_\eps^*$, we have
\begin{equation}\label{Eq:StationaryCoefficientSize}
\Psi(\rho_\eps^*)-\bar\Psi=O(\eps^2\sigma_\eps^*),
\qquad
\frac{\Phi(\rho_\eps^*)-\gamma_0}{\eps}=O(\eps \sigma_\eps^*).
\end{equation}
Define \(\Phi_\eps^{\rm d}\) and \(\Phi_\eps^{\rm s}\) by replacing \((f_\eps,\mathcal G)\) in \eqref{Eq:AcousticSymmetricUnknown} with the corresponding split pair:
\begin{align*}
    \Phi_\eps^{\rm d}:=\begin{pmatrix}\K^{1/2}f_\eps^{\rm d}\\[1mm]\sqrt{\rho_\infty}\Lambda^{-1}\operatorname{div}\mathbb Q\mathcal G^{\rm d}\end{pmatrix},
    \qquad
    \Phi_\eps^{\rm s}:=\begin{pmatrix}\K^{1/2}f_\eps^{\rm s}\\[1mm]\sqrt{\rho_\infty}\Lambda^{-1}\operatorname{div}\mathbb Q\mathcal G^{\rm s}\end{pmatrix}.
\end{align*}
Then, for these acoustic source terms, we have the following estimates.

\begin{lem}[Acoustic source estimates]\label{Lem:AcousticSource}
Assume that \eqref{Eq:IndexRange} holds and set
\(X_\eps:=\|V_\eps\|_{L^r_t\dot B^s_{p,1}}\).  Then we have
\begin{align}
&\|(\Phi_\eps^{\rm d})^\ell\|_{L^1_t\dot B^{1/2}_{2,\infty}}
+\|(\Phi_\eps^{\rm d})^h\|_{L^1_tH^{N-1}}
\leq C\delta_0^2,
\label{Eq:DecayingSource}\\
&\|(\Phi_\eps^{\rm s})^\ell\|_{L^1_t\dot B^{1/2}_{2,\infty}}
\leq C\eps\delta_0^2,
\label{Eq:StationaryLowSource}\\
&\|(\Phi_\eps^{\rm s})^h\|_{L^r_t\dot B^{s-2}_{p,1}}
\leq C\eps\delta_0^2+C\eps\delta_0X_\eps.
\label{Eq:StationaryHighSource}
\end{align}
\end{lem}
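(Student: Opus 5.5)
We treat the three bounds separately, term by term, using only the uniform energy bound \eqref{MainEng-intro}, the decay \eqref{TimeDecay}, \eqref{Eq:HmDecay}, Lemma~\ref{Lem:TimeIntegrability}, the stationary bounds of Theorem~\ref{LMSNSKThm-intro} (with $k=N+1$, so that $\|u_\eps^*\|_{\dot B^{1/2}_{2,\infty}\cap\dot H^{N+3}}+\|\sigma_\eps^*\|_{\dot B^{-1/2}_{2,\infty}\cap\dot H^{N+4}}\lesssim\delta_F\le\delta_0$), and the product and composition laws of Proposition~\ref{BesovP} and Lemma~\ref{Lem:HybridProduct}. Throughout, $\K^{1/2}$ costs one derivative at high frequency and nothing at low frequency. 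The Riesz-type operator $\Lambda^{-1}\operatorname{div}\mathbb Q$ is a zero-order multiplier. Hence $(\Phi^{\rm d}_\eps)^\ell$ is controlled by $\|H^{\rm d}\|_{\dot B^{3/2}_{2,\infty}}+\|\mathcal G^{\rm d}\|_{\dot B^{1/2}_{2,\infty}}$ restricted to $j\le0$, and since low frequencies allow trading regularity downward, this is further bounded by the $\dot B^{1/2}_{2,\infty}$ norm of the products. Similarly, $(\Phi^{\rm d}_\eps)^h$ in $H^{N-1}$ is controlled by $\|H^{\rm d}\|_{H^{N+1}}+\|\mathcal G^{\rm d}\|_{H^{N-1}}$.

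\textbf{Step 1, proof of \eqref{Eq:DecayingSource}.} Every term of $f^{\rm d}_\eps$ and $\mathcal G^{\rm d}$ is at least linear in $U_\eps$. The key point is that each term either contains two perturbation factors or contains a factor $\Psi(\rho_\eps)-\Psi(\rho_\eps^*)=O(\eps\sigma_\eps)$ or $\eps^{-1}(\Phi(\rho_\eps)-\Phi(\rho^*_\eps))=O(\sigma_\eps)$ multiplied by a derivative of a perturbation or of a stationary quantity.

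For the genuinely quadratic terms, such as $\sigma_\eps w_\eps$, $w_\eps\cdot\nabla w_\eps$, and $\sigma_\eps\nabla\sigma_\eps$, we bound the time integral by $\int\|U_\eps\|_{L^\infty}\|\nabla U_\eps\|_{H^N}\,dt\lesssim\delta_0^2$ via \eqref{Eq:TimeIntegrability2}. At low frequency we use $\|U_\eps\|_{\dot B^{1/2}_{2,\infty}}\|U_\eps\|_{\dot B^{3/2}_{2,1}}$ from Proposition~\ref{BesovP}(vi) together with the decay rate $(1+t)^{-1/2}$ from \eqref{TimeDecay}. This rate is not integrable, so we do not use it alone. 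Instead we pair $\|U_\eps\|_{\dot B^{1/2}_{2,\infty}\cap L^\infty}$ with $\|U_\eps\|_{\dot B^{s}_{2,1}}$ for some $s>5/2$, which is integrable in time by \eqref{TimeDecay}.

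The terms that are linear in the perturbation with a stationary coefficient are $\sigma_\eps u_\eps^*$, $u_\eps^*\cdot\nabla w_\eps$, $w_\eps\cdot\nabla u^*_\eps$, $(\Psi(\rho_\eps)-\Psi(\rho_\eps^*))\A u_\eps^*$, and $(\Phi(\rho_\eps)-\Phi(\rho^*_\eps))\nabla\sigma^*_\eps$. Their coefficients are $\delta_0$-small and time independent, so time integrability must come from the perturbation factor. We place that factor in $L^1_t$ through $\|U_\eps\|_{L^\infty}+\|U_\eps\|_{\dot H^N}\in L^1_t$ (Lemma~\ref{Lem:TimeIntegrability}), or through $\|U_\eps\|_{\dot B^{s}_{2,1}}$ with $s>5/2$ at low frequency, putting the stationary factor in $\dot B^{1/2}_{2,\infty}\cap\dot H^{N+3}$. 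This gives $\delta_0\cdot\delta_0$. The regularity bookkeeping, namely $H^{N+1}$ for $\K^{1/2}\operatorname{div}H$ landing in $H^{N-1}$, is exactly where $N\ge5$ and the loss to $H^{N-1}$ are used.

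\textbf{Step 2, proof of \eqref{Eq:StationaryLowSource}.} Here $f^{\rm s}_\eps=-\eps\operatorname{div}(\sigma^*_\eps w_\eps)$, and by \eqref{Eq:StationaryCoefficientSize} the coefficients in $\mathcal G^{\rm s}$ are $O(\eps^2\sigma^*_\eps)$ and $O(\eps\sigma^*_\eps)$. This produces the explicit factor $\eps$. The remaining product of $\sigma^*_\eps$ (or a smooth function of it) with $\nabla w_\eps$, $\A w_\eps$, or $\nabla\sigma_\eps$ is estimated at low frequency by Proposition~\ref{BesovP}(vi) and (v). We put $\sigma_\eps^*$ in $\dot B^{-1/2}_{2,\infty}\cap\dot B^{3/2}_{2,1}$ and the perturbation derivative in $\dot B^{s}_{2,1}$ with $s>5/2$, which is $L^1_t$ with norm $\lesssim\delta_0$. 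Low-frequency localization absorbs any surplus derivative.

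\textbf{Step 3, proof of \eqref{Eq:StationaryHighSource}, the main obstacle.} The high part is not time integrable, so we must stay in $L^r_t\dot B^{s-2}_{p,1}$ and pay with $X_\eps$. We write $\mathcal G^{\rm s}$ and $\K^{1/2}f^{\rm s}_\eps$ at high frequency as sums of products of the form $a\,\A w_\eps$, $a\nabla\sigma_\eps$, and $\eps\nabla^2(\sigma^*_\eps w_\eps)$, where $a=O(\eps\sigma^*_\eps)$; note that $\K^{1/2}\sim\Lambda$ when $j\ge1$. We then apply Lemma~\ref{Lem:HybridProduct}, using \eqref{Eq:HybridViscosity} for $a\A w_\eps$ and the analogous transport-type bound for $a\nabla\sigma_\eps$, with the coefficient measured in $\mathcal X$ and in $\dot B^s_{p,1}$. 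This yields
\[
\eps\delta_0\big(\|w_\eps\|_{L^r_t\dot B^s_{p,1}}+\|\sigma_\eps\|_{L^r_t\dot B^{s}_{p,1}}\big)+\eps\|\sigma^*_\eps\|_{\dot B^s_{p,1}}\|U_\eps\|_{L^r_t\mathcal X}.
\]
The first part is bounded by $\eps\delta_0X_\eps$, plus the incompressible part $\mathbb Pw_\eps$. For that part, $\Lambda^{-1}\operatorname{div}\mathbb Q$ acts only on $\mathbb Q$, so we must check that only $\mathbb Qw_\eps$, i.e.\ $d_\eps$, enters; otherwise we bound $\mathbb Pw_\eps$ in $L^r_t\dot B^s_{p,1}$ by decay and embedding. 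The second part is at most $\eps\delta_0^2$, because $\|U_\eps\|_{L^r_t\mathcal X}\lesssim\delta_0$ by \eqref{TimeDecay} with $r>2$. The delicate point is the $\eps\operatorname{div}(\sigma^*_\eps w_\eps)$ term, which carries an extra derivative through $\K^{1/2}$. For it we would use the derivative surplus of $\sigma^*_\eps\in\dot H^{N+4}$ together with the paraproduct splitting, placing all high derivatives on $\sigma_\eps^*$ and only $s$ derivatives on $w_\eps$.
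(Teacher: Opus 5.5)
Your proposal has genuine gaps. Your Steps 1 and 2 reproduce the paper's own sketch: Lemma~\ref{Lem:TimeIntegrability}, the decay \eqref{TimeDecay} with $s>5/2$, and the explicit $\eps$ from \eqref{Eq:StationaryCoefficientSize}. Step 3 is where you depart from the paper, and there one claim does not follow. After Lemma~\ref{Lem:HybridProduct} you are left with $\eps\|\sigma_\eps^*\|_{\dot B^s_{p,1}}\|U_\eps\|_{L^r_t\mathcal X}$, and you assert $\|U_\eps\|_{L^r_t\mathcal X}\lesssim\delta_0$ ``by \eqref{TimeDecay} with $r>2$''. But $\mathcal X$ contains the $\dot B^{1/2}_{2,\infty}$ norm, which \eqref{LowFeqest} only bounds uniformly in time, while \eqref{TimeDecay} concerns $\dot B^{\sigma}_{2,1}$ with $\sigma>1/2$. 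So $\|U_\eps\|_{L^r_t\dot B^{1/2}_{2,\infty}}$ is not controlled for $r<\infty$.

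This is repairable because only $(\Phi_\eps^{\rm s})^h$ is at stake. In the proof of Lemma~\ref{Lem:HybridProduct}, the $\dot B^{1/2}_{2,\infty}$ part of $\mathcal X$ is used only for output blocks $j\le0$ (via \eqref{Eq:HybridLowHighLinfty}) and for the finitely many remainder blocks $1-N_0\le k\le 0$. For high output frequencies you may therefore replace $\mathcal X$ by $\dot B^{3/2}_{2,1}$, which is $O((1+t)^{-1/2}\delta_0)$ and hence in $L^r_t$ for $r>2$. The paper does not use Lemma~\ref{Lem:HybridProduct} here. It treats the $O(\eps)$ stationary coefficients as smooth bounded multipliers and keeps the perturbation in $\dot B^s_{p,1}$. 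It then bounds all of $w_\eps$, not only $d_\eps$, through \eqref{Eq:wLrBs}, that is, $\dot B^{s+3\delta_p}_{2,1}\hookrightarrow\dot B^s_{p,1}$ together with \eqref{TimeDecay} and $s>\frac12+\frac2r$. This also removes your hesitation about $\mathbb Pw_\eps$.

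The more serious problem is in Steps 1--2. It is shared with the paper's sketch, so it is not a deviation from the paper, but it is a real gap.

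First, your low-frequency device of pairing $\|U_\eps\|_{\dot B^{1/2}_{2,\infty}\cap L^\infty}$ with $\|U_\eps\|_{\dot B^s_{2,1}}$, $s>5/2$, cannot work. In a paraproduct, the output block $j\le0$ contains $\dot\Delta_j$ of one factor at the same low frequency. If both factors are concentrated at frequency $2^{-m}$, the left side ($\|fg\|_{\dot B^{3/2}_{2,\infty}}$ or $\|f\nabla g\|_{\dot B^{1/2}_{2,\infty}}$) is independent of $m$, while the right side is $O(2^{-m(s-5/2)})$.

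Second, $\dot B^s_{2,1}$ does not embed into $L^\infty$ for $s\neq 3/2$. So \eqref{TimeDecay} gives only $\|U_\eps\|_{L^\infty}\lesssim\|U_\eps\|_{\dot B^{3/2}_{2,1}}\lesssim(1+t)^{-1/2}\delta_0$. Using the uniform $L^2$ bound in \eqref{HighFeqest}, one can at best reach $(1+t)^{-3/4}$. The first bound in Lemma~\ref{Lem:TimeIntegrability} rests on exactly that invalid embedding. You rely on it to make time-integrable the terms in which an undifferentiated perturbation multiplies a stationary coefficient: $w_\eps\cdot\nabla u_\eps^*$, $\sigma_\eps u_\eps^*$, $\eps\, w_\eps\cdot\nabla\sigma_\eps^*$ inside $f_\eps^{\rm s}$, and so on.

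For these terms, $L^1_t$ control requires $\|w_\eps\|_{L^\infty}\in L^1_t$, and that cannot be extracted from the available bounds. Already the scalar heat flow of $(1+|x|^2)^{-\alpha/2}$ with $\frac32<\alpha<2$, an admissible datum in $\dot B^{1/2}_{2,\infty}\cap H^N$, has sup norm of order $t^{-\alpha/2}$. Hence \eqref{Eq:DecayingSource} and \eqref{Eq:StationaryLowSource} are not established by your argument. Closing them needs an additional idea, for instance a structural cancellation that moves a derivative onto the perturbation in these terms, or a non-$L^1_t$ treatment of them. The proposal supplies neither.
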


\begin{proof}
At the high-frequency region, \(\K^{1/2}\operatorname{div}\) has order two. Then, by Theorem \ref{LMSNSKThm-intro}, we have
\begin{align}
\|(\K^{1/2}f_\eps^{\rm d})^h\|_{H^{N-1}}
&=\|-(\K^{1/2}\dive (\sigma_\eps(u_\eps^*+w_\eps)))^h\|_{H^{N-1}}\nonumber\\
&\leq C\|\sigma_\eps(u_\eps^*+w_\eps)\|_{H^{N+1}}\nonumber\\
&\leq C(\delta_0\|\K^{1/2}\sigma_\eps\|_{H^N}
+\|\sigma_\eps\|_{L^\infty}\|w_\eps\|_{H^{N+1}}
+\|w_\eps\|_{L^\infty}\|\K^{1/2}\sigma_\eps\|_{H^N}).\label{4.64.1}
\end{align} 
Thanks to the estimate \eqref{Eq:TimeIntegrability1}, 
the first and third terms on the right-hand side of \eqref{4.64.1} are integrable over time. 
Applying Cauchy--Schwarz inequality in time and the estimates \eqref{MainEng-intro} and \eqref{Eq:TimeIntegrability2}, we obtain that the second term on the right-hand side of \eqref{4.64.1} belongs to $L^1_t$.

For \(\mathcal G^{\rm d}\), the transport terms have the bound
\begin{align*}
    \|-u_\eps^*\cdot\nabla w_\eps
    -w_\eps\cdot\nabla u_\eps^*-w_\eps\cdot\nabla w_\eps\|_{H^N}
    \leq C\delta_0(\|w_\eps\|_{H^N}+\|w_\eps\|_{L^\infty})+C\|w_\eps\|_{L^\infty}\|w_\eps\|_{H^N}.
\end{align*}
The remaining terms rely on
\begin{align*}
    \Psi(\rho_\eps)-\Psi(\rho_\eps^*)=O(\eps\sigma_\eps),\qquad
    \frac{\Phi(\rho_\eps)-\Phi(\rho_\eps^*)}{\eps}=O(\sigma_\eps),
\end{align*}
and their time integrability follows from the estimates
\eqref{Eq:TimeIntegrability1}--\eqref{Eq:TimeIntegrability2}.
Using \eqref{TimeDecay} with some
\(s\in(5/2,N-2)\), the same Bony decomposition yields the required low-frequency estimate for
\((\Phi_\eps^{\rm d})^\ell\), which proves \eqref{Eq:DecayingSource}.

By \eqref{Eq:StationaryCoefficientSize}, the low-frequency component of the stationary source part carries an explicit factor \(\eps\), and
\eqref{Eq:TimeIntegrability1} gives \eqref{Eq:StationaryLowSource}.  At the high-frequency region, we have
\begin{align*}
\|(\K^{1/2}f_\eps^{\rm s})^h\|_{\dot B^{s-2}_{p,1}}
&\leq C\eps \delta_0\|w_\eps\|_{\dot B^s_{p,1}},\\
\|(\mathcal G^{\rm s})^h\|_{\dot B^{s-2}_{p,1}}
&\leq C\eps^2\delta_0\|w_\eps\|_{\dot B^s_{p,1}}
+C\eps\delta_0\|\K^{1/2}\sigma_\eps\|_{\dot B^s_{p,1}}.
\end{align*}
Combining the Besov embedding
\(\dot B^{s+3\delta_p}_{2,1}\hookrightarrow\dot B^s_{p,1}\) and the time-decay estimate \eqref{TimeDecay}, we obtain
\begin{equation}\label{Eq:wLrBs}
\|w_\eps\|_{L^r_t\dot B^s_{p,1}}
\leq C\delta_0,
\end{equation}
where we have used \eqref{Eq:IndexRange} to ensure \(s+3\delta_p<N-2\), and the decay exponent is time-integrable because
\(s>1/2+2/r\).  Thus, we obtain \eqref{Eq:StationaryHighSource} and complete the proof of this lemma.
\end{proof}

With the aid of Lemmas \ref{Lem:TimeIntegrability}--\ref{Lem:AcousticSource}, we proceed to establish the convergence result.
\begin{prop}[Convergence of the acoustic component]\label{Prop:AcousticConvergence}
Under the assumption \eqref{Eq:IndexRange}, we have
\begin{equation}\label{Eq:StrongAcousticConvergence}
\|\K^{1/2}\sigma_\eps\|_{L^r_t\dot B^s_{p,1}}
+\|\mathbb Qw_\eps\|_{L^r_t\dot B^s_{p,1}}
\leq C\eps^{\beta(p,r)}\delta_0,
\end{equation}
and 
\begin{equation}\label{Eq:AcousticConvergence}
\|\sigma_\eps\|_{L^r_t\dot B^s_{p,1}}
+\|\mathbb Qw_\eps\|_{L^r_t\dot B^s_{p,1}}
\leq C\eps^{\beta(p,r)}\delta_0.
\end{equation}
\end{prop}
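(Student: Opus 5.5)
The argument starts from the Duhamel representation \eqref{Eq:AcousticDuhamel} for $V_\eps=(\K^{1/2}\sigma_\eps,\sqrt{\rho_\infty}d_\eps)^{\mathsf T}$, with the source split as $\Phi_\eps=\Phi_\eps^{\rm d}+(\Phi_\eps^{\rm s})^\ell+(\Phi_\eps^{\rm s})^h$. This gives
\begin{align*}
X_\eps\le \|S_\eps(t)V_{\eps,0}\|_{L^r_t\dot B^s_{p,1}}
+\Big\|\int_0^tS_\eps(t-\tau)\bigl(\Phi^{\rm d}_\eps+(\Phi^{\rm s}_\eps)^\ell\bigr)\,{\rm d}\tau\Big\|_{L^r_t\dot B^s_{p,1}}
+\Big\|\int_0^tS_\eps(t-\tau)(\Phi^{\rm s}_\eps)^h\,{\rm d}\tau\Big\|_{L^r_t\dot B^s_{p,1}}.
\end{align*}

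\emph{The first two terms.} For the homogeneous term I apply \eqref{Eq:HomogeneousStrichartz} with $M=N$. This is admissible because $s+a^{\rm s}_{p,r}<3/p+3\delta_p<N$. Since $\K^{1/2}$ and $\Lambda^{-1}\operatorname{div}\mathbb Q$ are bounded on the relevant spaces, the assumption \eqref{initialcondition-intro} bounds it by $C\eps^{\beta(p,r)}\delta_0$. For the time-integrable source and the low-frequency stationary source I use \eqref{Eq:InhomogeneousStrichartz} with $M=N-1$; the condition $M>s+a^{\rm s}_{p,r}$ still holds because $s<3/p<3/2$ and $N\ge5$. Then \eqref{Eq:DecayingSource} and \eqref{Eq:StationaryLowSource} bound this contribution by $C\eps^{\beta(p,r)}(\delta_0^2+\eps\delta_0^2)$.

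\emph{The high-frequency stationary source (main obstacle).} This piece is not time-integrable, so I would use the damped estimate of Lemma~\ref{Lem:HighDampedEstimate} together with \eqref{Eq:StationaryHighSource}. The resulting bound is
\begin{align*}
C\eps^{-3\delta_p}\bigl(\eps\delta_0^2+\eps\delta_0X_\eps\bigr)=C\eps^{1-3\delta_p}\delta_0(\delta_0+X_\eps).
\end{align*}
Since $p<6$ we have $3\delta_p<1$, so $1-3\delta_p>0$ and the coefficient of $X_\eps$ is small. The source term must still have rate $\eps^{\beta(p,r)}$, which requires $1-3\delta_p\ge\beta(p,r)$. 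I expect this step to be the real difficulty. The inequality holds whenever $\delta_p\le 1/4$, i.e.\ $p\le 4$, using $\beta\le\delta_p$. For $4<p<6$ it can fail, and I would first try to extract additional powers of $\eps$ from \eqref{Eq:StationaryCoefficientSize}. Concretely, $f^{\rm s}_\eps$ and the first term of $\mathcal G^{\rm s}$ carry $\eps^2$-type coefficients. The term $\eps^{-1}(\Phi(\rho_\eps^*)-\gamma_0)\nabla\sigma_\eps$ carries $\eps\,\sigma^*_\eps$, and I would interpolate the damped estimate with \eqref{Eq:HighBlockStrichartz}, using the dyadic heat factor $e^{-c2^{2j}t}$ at high frequency. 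Failing that, I would accept the weaker exponent in that range. Either way, absorbing $C\eps^{1-3\delta_p}\delta_0X_\eps$ into the left side for $\eps_0,\delta_0$ small, and noting that $X_\eps<\infty$ a priori by \eqref{Eq:wLrBs} and its analogue for $\sigma_\eps$, gives $X_\eps\le C\eps^{\beta(p,r)}\delta_0$.

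\emph{Recovering the stated quantities.} The estimate for $X_\eps$ controls $\K^{1/2}\sigma_\eps$ and $d_\eps$. Since $\mathbb Qw_\eps=-\nabla\Lambda^{-1}d_\eps$ up to sign, and $\nabla\Lambda^{-1}$ is a zero-order Mikhlin multiplier bounded on $\dot B^s_{p,1}$, this yields \eqref{Eq:StrongAcousticConvergence}. For \eqref{Eq:AcousticConvergence}, the multiplier $\K^{-1/2}$ has symbol $(\gamma_0+\kappa|\xi|^2)^{-1/2}$, which satisfies the Mikhlin condition. Hence $\|\sigma_\eps\|_{\dot B^s_{p,1}}\lesssim\|\K^{1/2}\sigma_\eps\|_{\dot B^s_{p,1}}$ blockwise for $1<p<\infty$, which completes the argument.
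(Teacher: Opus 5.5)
Your outline follows the paper's proof step by step: the Duhamel formula, the Strichartz estimates for the homogeneous, decaying and low-frequency stationary parts, Lemma~\ref{Lem:HighDampedEstimate} for $(\Phi^{\rm s}_\eps)^h$, absorption, and the zero-order multipliers $\nabla\Lambda^{-1}$ and $\K^{-1/2}$ at the end. The gap is the step you flag as the ``real difficulty''. You leave it open, and your claim that $1-3\delta_p\ge\beta(p,r)$ ``can fail'' for $4<p<6$ is false under the standing hypothesis. As written, your argument gives the stated rate only for $p\le 4$. For $4<p<6$ it either defers to an unspecified refinement or settles for a weaker exponent, which does not prove the statement. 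Your closing sentence (``either way \dots\ gives $X_\eps\le C\eps^{\beta(p,r)}\delta_0$'') therefore does not follow from what precedes it.

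The missing observation is that \eqref{Eq:IndexRange} couples $r$ and $p$. Since $\frac12+\frac2r<s<\frac3p$, we have
\[
\frac2r<\frac3p-\frac12=1-3\delta_p=:\theta_p .
\]
Combined with $\beta(p,r)\le\frac1r$, this gives $\theta_p>\frac2r>\beta(p,r)$ for every admissible triple $(p,r,s)$, including $4<p<6$. For example, $p=5$ forces $r>20$, so $\beta\le\frac1r<\frac1{20}<\frac1{10}=\theta_5$. You only used $\beta\le\delta_p$, which requires $p\le4$. The bound $\beta\le 1/r$, together with the lower bound on $s$, covers all $p<6$.

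With this inequality, the damped contribution $C\eps^{\theta_p}\delta_0(\delta_0+X_\eps)$ is at most $C\eps^{\beta(p,r)}\delta_0^2+C\eps^{\theta_p}\delta_0X_\eps$ for $\eps\le1$. The absorption then closes exactly as in the paper; this is \eqref{Eq:ThetaDominatesBeta}. None of your fallbacks is needed. Extracting extra powers of $\eps$ would not work as you describe anyway, because $f^{\rm s}_\eps=-\eps\operatorname{div}(\sigma^*_\eps w_\eps)$ carries only one factor of $\eps$, not $\eps^2$.

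Your remark that $X_\eps<\infty$ a priori, which the absorption needs, is a good point that the paper leaves implicit.
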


\begin{proof}
    By \eqref{Eq:AcousticDuhamel}, we directly get
    \begin{align}
        X_\eps&=\left\|S_\eps(t)V_{\eps,0}+\int_0^tS_\eps(t-\tau)\Phi_\eps(\tau)\,{\rm d}\tau\right\|_{L^r_t\dot B^s_{p,1}}\nonumber\\
        &\leq \|S_\eps(t)V_{\eps,0}\|_{L^r_t\dot B^s_{p,1}} +\left\|\int_0^tS_\eps(t-\tau)\Phi_\eps^{\rm d} (\tau)\,{\rm d}\tau\right\|_{L^r_t\dot B^s_{p,1}}
        +\left\|\int_0^tS_\eps(t-\tau)(\Phi_\eps^{\rm s})^{\ell}(\tau)\,{\rm d}\tau\right\|_{L^r_t\dot B^s_{p,1}}\nonumber\\
        &\qquad+\left\|\int_0^tS_\eps(t-\tau)(\Phi_\eps^{\rm s})^{h}(\tau)\,{\rm d}\tau\right\|_{L^r_t\dot B^s_{p,1}}.\label{4.68.1}
    \end{align}
    For the first three terms on the right-hand side of \eqref{4.68.1},  by Proposition \ref{Prop:AcousticStrichartz},  we deduce
    \begin{align*}
        &\|S_\eps(t)V_{\eps,0}\|_{L^r_t\dot B^s_{p,1}} +\left\|\int_0^tS_\eps(t-\tau)\Phi_\eps^{\rm d} (\tau)\,{\rm d}\tau\right\|_{L^r_t\dot B^s_{p,1}}        +\left\|\int_0^tS_\eps(t-\tau)(\Phi_\eps^{\rm s})^{\ell}(\tau)\,{\rm d}\tau\right\|_{L^r_t\dot B^s_{p,1}}\\
        \leq&\, C\eps^{\beta(p,r)}\big(\|V_{\eps,0}^{\ell}\|_{\dot B^{1/2}_{2,\infty}}+\|V_{\eps,0}^h\|_{H^M}+\|(\Phi_\eps^{\rm d})^{\ell}\|_{L^1_t\dot B^{1/2}_{2,\infty}}+\|(\Phi_\eps^{\rm d})^h\|_{L^1_tH^M}+\|(\Phi_\eps^{\rm s})^{\ell}\|_{L^1_t\dot B^{1/2}_{2,\infty}}\big).
    \end{align*}
    Applying Lemma \ref{Lem:HighDampedEstimate} to the high-frequency part of \(\Phi_\eps^{\rm s}\), we obtain
    \begin{align*}
        \left\|\int_0^tS_\eps(t-\tau)(\Phi_\eps^{\rm s})^{h}(\tau)\,{\rm d}\tau\right\|_{L^r_t\dot B^s_{p,1}}
        \leq C\eps^{-3\delta_p}\|(\Phi_\eps^{\rm s})^{h}\|_{L^r_t\dot B^{s-2}_{p,1}}.
    \end{align*}
    Let
\begin{equation*}
\theta_p:=1-3\delta_p=\frac3p-\frac12.
\end{equation*}
The index condition \eqref{Eq:IndexRange} implies
\begin{equation}\label{Eq:ThetaDominatesBeta}
\theta_p>\frac2r>\beta(p,r).
\end{equation}
Using the initial condition \eqref{initialcondition-intro} and Lemma \ref{Lem:AcousticSource}, we obtain
\begin{align*}
    X_\eps
\leq C\eps^{\beta(p,r)}\delta_0
+C\eps^{\theta_p}\delta_0^2
+C\eps^{\theta_p}\delta_0X_\eps.
\end{align*}
Choose \(\eps_0\) and \(\delta_0\) sufficiently small such that the last term can be absorbed.  By
\eqref{Eq:ThetaDominatesBeta}, the second term is bounded by
\(C\eps^{\beta(p,r)}\delta_0\).  This gives the estimate for
\((\K^{1/2}\sigma_\eps,d_\eps)\). Since
\(\mathbb Qw_\eps=-\nabla\Lambda^{-1}d_\eps\) and
\(\K^{-1/2}\) is a zero-order multiplier at the low-frequency region and a smoothing multiplier at the high-frequency region, \eqref{Eq:StrongAcousticConvergence} and \eqref{Eq:AcousticConvergence} follow.
\end{proof}

Next, we analyze the incompressible projection error. Set
\begin{equation}\label{Eq:zDef}
z_\eps:=\mathbb Pw_\eps-\widetilde u,
\qquad
a_\eps:=u_\eps^*-u^*,
\qquad
b_\eps:=w_\eps-\widetilde u=z_\eps+\mathbb Qw_\eps.
\end{equation}
The stationary estimate \eqref{machest-intro} implies
\begin{equation}\label{Eq:aStationaryError}
\|a_\eps\|_{\dot B^{1/2}_{2,\infty}\cap H^{N+3}}
\leq C\eps^2\delta_0^2,
\end{equation}
and
\begin{equation}\label{Eq:zInitial}
z_\eps(0)=u^*-\mathbb Pu_\eps^*.
\end{equation}

Substituting the continuity equation into the compressible momentum equation in \eqref{NNSK-intro}, and dividing the resulting identity by $\rho_\eps$, we obtain 
\begin{align}
\partial_tu_\eps +u_\eps\cdot\nabla u_\eps +\frac{\nabla p(\rho_\eps)}{\eps^2\rho_\eps}-\frac{\mu}{\rho_\eps}\Delta u_\eps-\frac{\nu}{\rho_\eps}\nabla\dive u_\eps-\frac{\kappa}{\eps^2}\nabla\Delta\rho_\eps
=F.\label{4.73.1}
\end{align}
Denote
\(\bar\mu:=\mu/\rho_\infty\), \(\bar\Psi:=\rho_\infty^{-1}\), and
\begin{align*}
\nabla \Theta_\eps:=-\frac{\Psi(\rho_\eps)}{\eps^2}\nabla p(\rho_\eps)+\frac{\kappa}{\eps^2}\nabla\Delta\rho_\eps+\bar\Psi\nu\nabla\dive u_\eps,
\end{align*}
then the equation \eqref{4.73.1} can be written as
\begin{equation}
\partial_tu_\eps-\bar\mu\Delta u_\eps
=-u_\eps\cdot\nabla u_\eps
+(\Psi(\rho_\eps)-\bar\Psi)\A u_\eps
+F+\nabla\Theta_\eps.\label{4.74.1}
\end{equation}
Applying the Helmholtz projection \(\mathbb{P}\) to \eqref{4.74.1} and
subtracting the stationary projected equation, we deduce
\begin{align*}
    \partial_t \mathbb P w_\eps-\bar\mu\Delta\mathbb Pw_\eps
    &=-\mathbb P(u_\eps^*\cdot\nabla w_\eps+w_\eps\cdot\nabla u_\eps^*
    +w_\eps\cdot\nabla w_\eps)\\
    &\qquad+\bigl(\Psi(\rho_\eps)-\Psi(\rho_\eps^*)\bigr)\A u_\eps^*+\bigl(\Psi(\rho_\eps)-\bar\Psi\bigr)\A w_\eps,
\end{align*}
which, compared with \eqref{Eq:IncompPerturbation}, gives
\begin{equation*}
\partial_tz_\eps-\bar\mu\Delta z_\eps
=-\mathbb P\mathcal C_\eps+\mathbb P\mathcal V_\eps,
\end{equation*}
where
\begin{align}
\mathcal C_\eps:={}&
\bigl(u_\eps^*\cdot\nabla w_\eps
+w_\eps\cdot\nabla u_\eps^*
+w_\eps\cdot\nabla w_\eps\bigr)
\notag-\bigl(u^*\cdot\nabla\widetilde u
+\widetilde u\cdot\nabla u^*
+\widetilde u\cdot\nabla\widetilde u\bigr),
\notag\\
\mathcal V_\eps:={}&
\bigl(\Psi(\rho_\eps)-\Psi(\rho_\eps^*)\bigr)\A u_\eps^*
+\bigl(\Psi(\rho_\eps)-\bar\Psi\bigr)\A w_\eps.
\label{Eq:Verror}
\end{align}
By virtue of the expansions
\(u_\eps^*=u^*+a_\eps\) and \(w_\eps=\widetilde u+b_\eps\), we obtain
\begin{align}
\mathcal C_\eps={}&u^*\cdot\nabla b_\eps+b_\eps\cdot\nabla u^*
+\widetilde u\cdot\nabla b_\eps+b_\eps\cdot\nabla\widetilde u
+b_\eps\cdot\nabla b_\eps
\notag\\
&+a_\eps\cdot\nabla\widetilde u
+\widetilde u\cdot\nabla a_\eps
+a_\eps\cdot\nabla b_\eps+b_\eps\cdot\nabla a_\eps.
\label{Eq:CerrorExpanded}
\end{align}
Thus, every term in \eqref{Eq:CerrorExpanded} contains \(b_\eps=z_\eps+\mathbb Qw_\eps\) or the stationary error \(a_\eps=O(\eps^2)\).

Recall that
\begin{equation*}
\mathcal X=\dot B^{1/2}_{2,\infty}\cap\dot B^{3/2}_{2,1}.
\end{equation*}
The estimates already proved yield
\begin{equation}\label{Eq:AuxiliaryXBounds}
\sup_{t\geq0}
\|(w_\eps,\widetilde u,u_\eps^*,u^*)\|_{\mathcal X}
+\|(w_\eps,\widetilde u)\|_{L^r_t\dot B^s_{p,1}}
\leq C\delta_0.
\end{equation}
{Indeed, the estimates \eqref{TimeDecay}
and \eqref{Eq:IncompDecay}} provide the global uniform-in-$\epsilon$ a priori bound
\begin{align*}
    \sup_{t\geq0}\|(w_\eps,\widetilde u,u_\eps^*,u^*)\|_{\dot B^{3/2}_{2,1}}\leq C\delta_0,
\end{align*}
and the \(L^r_t\dot B^s_{p,1}\) bound follows from the decay estimates and the embedding $\dot B^{s+3\delta_p}_{2,1}\hookrightarrow\dot B^s_{p,1}$.

With the estimate \eqref{Eq:AuxiliaryXBounds} in hand, we are in a position to bound the error source.

\begin{lem}[Error source estimates]\label{Lem:ErrorSources}
Under the assumption\eqref{Eq:IndexRange}, we have
\begin{align}
\|\mathcal C_\eps\|_{L^r_t\dot B^{s-2}_{p,1}}
&\leq C\delta_0\bigl(
\|z_\eps\|_{L^r_t\dot B^s_{p,1}}
+\|\mathbb Qw_\eps\|_{L^r_t\dot B^s_{p,1}}\bigr)
+C\eps^2\delta_0,
\label{Eq:CerrorEstimate}\\
\|\mathcal V_\eps\|_{L^r_t\dot B^{s-2}_{p,1}}
&\leq C\eps\delta_0.
\label{Eq:VerrorEstimate}
\end{align}
\end{lem}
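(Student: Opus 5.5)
The plan is to estimate \(\mathcal C_\eps\) term by term in the expansion \eqref{Eq:CerrorExpanded}, using the low-high compatible product estimate \eqref{Eq:HybridTransport} of Lemma~\ref{Lem:HybridProduct} pointwise in time and then taking the \(L^r_t\) norm. For each bilinear term \(\tilde a\cdot\nabla\tilde b\) this gives
\[
\|\tilde a\cdot\nabla\tilde b\|_{L^r_t\dot B^{s-2}_{p,1}}\leq C\bigl(\|\tilde a\|_{L^r_t\dot B^s_{p,1}}\sup_t\|\tilde b\|_{\mathcal X}+\sup_t\|\tilde a\|_{\mathcal X}\|\tilde b\|_{L^r_t\dot B^s_{p,1}}\bigr).
\]
The terms with \(b_\eps\) fall into two groups. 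In the first group, \(u^*\cdot\nabla b_\eps\), \(b_\eps\cdot\nabla u^*\), \(\widetilde u\cdot\nabla b_\eps\) and \(b_\eps\cdot\nabla\widetilde u\), I place \(b_\eps\) in \(L^r_t\dot B^s_{p,1}\) and the other factor in \(\mathcal X\). The required bounds come from \eqref{Eq:AuxiliaryXBounds}. The complementary pairing also appears: it puts \(u^*\) or \(\widetilde u\) in \(L^r_t\dot B^s_{p,1}\) and \(b_\eps\) in \(\sup_t\mathcal X\). For \(\widetilde u\) this is fine. For \(u^*\), which is stationary, the norm \(\|u^*\|_{L^r_t\dot B^s_{p,1}}\) is infinite, so there I instead use a one-sided version of the paraproduct estimate. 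Its low-frequency factor \(T_{u^*}\nabla b_\eps\) and the remainder only require \(u^*\in\mathcal X\), while the factor \(T_{\nabla b_\eps}u^*\) is handled by \eqref{Eq:HybridLowHighLinfty} applied to \(\nabla b_\eps\) in \(L^r_t\dot B^{s}_{p,1}\) together with \(u^*\in\dot B^{3/2}_{2,1}\cap \dot B^{1/2}_{2,\infty}\).

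The quadratic term \(b_\eps\cdot\nabla b_\eps\) uses \(\sup_t\|b_\eps\|_{\mathcal X}\leq C\delta_0\), which holds because \(b_\eps=w_\eps-\widetilde u\). Finally I split \(\|b_\eps\|_{L^r_t\dot B^s_{p,1}}\leq\|z_\eps\|_{L^r_t\dot B^s_{p,1}}+\|\mathbb Qw_\eps\|_{L^r_t\dot B^s_{p,1}}\), using that \(\mathbb Q\) is bounded on \(\dot B^s_{p,1}\) since \(1<p<\infty\). This produces the first part of \eqref{Eq:CerrorEstimate}.

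The terms with \(a_\eps\) are handled next. For \(a_\eps\cdot\nabla\widetilde u\), \(\widetilde u\cdot\nabla a_\eps\), \(a_\eps\cdot\nabla b_\eps\) and \(b_\eps\cdot\nabla a_\eps\), the stationary factor \(a_\eps\) is always put in \(\mathcal X\), and by \eqref{Eq:aStationaryError} its norm there is at most \(C\eps^2\delta_0^2\). The time-dependent partner \(\widetilde u\) or \(b_\eps\) goes into \(L^r_t\dot B^s_{p,1}\), which is bounded by \(C\delta_0\) through \eqref{Eq:AuxiliaryXBounds}. The pairing that would need \(\|a_\eps\|_{L^r_t\dot B^s_{p,1}}\) is again avoided by the same one-sided paraproduct argument. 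Since \(a_\eps\in H^{N+3}\) controls every \(\dot B^{\sigma}_{2,1}\) with \(\sigma\geq1/2\) up to a high order, it can absorb any derivative losses. These terms therefore contribute \(C\eps^2\delta_0\).

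For \(\mathcal V_\eps\) I write \(\Psi(\rho_\eps)-\Psi(\rho_\eps^*)=\eps\sigma_\eps\,\Psi_1\), where \(\Psi_1\) is a smooth function of \((\rho_\eps,\rho_\eps^*)\). I also write \(\Psi(\rho_\eps)-\bar\Psi=\eps\sigma_\eps\Psi_1+O(\eps^2\sigma^*_\eps)\). With \(\tilde c=\eps\sigma_\eps\Psi_1\), I apply \eqref{Eq:HybridViscosity}. This requires \(\sigma_\eps\in L^r_t\dot B^s_{p,1}\), which is at most \(C\delta_0\) either by \eqref{Eq:AcousticConvergence} or by decay plus embedding, and \(\sigma_\eps\in\sup_t\mathcal X\), which follows from \eqref{MainEng-intro} and \eqref{TimeDecay}. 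It also requires \(u_\eps^*\) and \(w_\eps\) in \(\mathcal X\) and \(w_\eps\) in \(L^r_t\dot B^s_{p,1}\). The composition factor \(\Psi_1\) is handled by Proposition~\ref{BesovP}(v), via \(\Psi_1-\Psi_1(\rho_\infty,\rho_\infty)\). The term \(O(\eps^2\sigma_\eps^*)\A w_\eps\) is estimated with the stationary coefficient in \(\mathcal X\) and \(w_\eps\) in \(L^r_t\dot B^s_{p,1}\). All of this gives \(C\eps\delta_0\). The exception is the part \(\eps\sigma_\eps\A u^*_\eps\), where the stationary factor \(u_\eps^*\) cannot be put in \(L^r_t\); there \(\tilde c\) has to carry the \(L^r_t\dot B^s_{p,1}\) norm while \(u_\eps^*\) stays in \(\mathcal X\).

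The main obstacle is this asymmetry. Lemma~\ref{Lem:HybridProduct} as stated is symmetric, yet stationary factors are never in \(L^r_t\dot B^s_{p,1}\). I will therefore first check from its proof that the paraproduct in which the stationary factor is the high-frequency one, \(T_{\nabla(\cdot)}u^*\), can be bounded using only \(\mathcal X\)-type norms of the stationary factor. My first attempt is to estimate \(\|\dot S_{j-1}\nabla b\|_{L^\infty}\) by \(\|b\|_{\dot B^s_{p,1}}\) via Bernstein, which needs \(s<3/p\) and is exactly in the range \eqref{Eq:IndexRange}, and then \(\|\dot\Delta_j u^*\|_{L^p}\leq 2^{3j\delta_p}\|\dot\Delta_ju^*\|_{L^2}\) with the remaining weight summed against \(u^*\in\dot B^{1/2}_{2,\infty}\cap H^{N+2}\).
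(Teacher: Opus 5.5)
You are right that the symmetric bound of Lemma~\ref{Lem:HybridProduct} cannot simply be integrated in time. The paper's proof cites it together with \eqref{Eq:AuxiliaryXBounds} and does not address this. Your repair, however, has two concrete gaps.

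First, stationarity is not the only obstruction. To reach \eqref{Eq:CerrorEstimate}, every term linear in $b_\eps$ must be bounded by $C\delta_0\|b_\eps\|_{L^r_t\dot B^s_{p,1}}$, so $b_\eps$ must \emph{always} carry the $L^r_t\dot B^s_{p,1}$ norm. You call the complementary pairing fine for $\widetilde u\cdot\nabla b_\eps$ and $b_\eps\cdot\nabla\widetilde u$. It gives $\|\widetilde u\|_{L^r_t\dot B^s_{p,1}}\sup_t\|b_\eps\|_{\mathcal X}\lesssim\delta_0^2$, with no power of $\eps$ and no factor $\|b_\eps\|_{L^r_t\dot B^s_{p,1}}$. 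For ill-prepared data, $\sup_t\|\mathbb Q w_\eps\|_{\dot B^{1/2}_{2,\infty}}$ is in general of size $\delta_0$, not small in $\eps$: the acoustic energy disperses but does not become small. So this term would wipe out the rate in Proposition~\ref{Prop:IncompressibleError}. What you need is the one-sided estimate
\[
\|X\cdot\nabla b\|_{\dot B^{s-2}_{p,1}}+\|b\cdot\nabla X\|_{\dot B^{s-2}_{p,1}}\leq C\|X\|_{\mathcal X}\|b\|_{\dot B^s_{p,1}}
\]
for every partner $X\in\{u^*,\widetilde u,a_\eps\}$, not only for the stationary ones.

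Second, your sketch of this estimate does not close for the paraproduct $T_{\nabla b}X$, where $X$ carries the high frequency. Suppose you bound $\|\dot S_{j-1}\nabla b\|_{L^\infty}\lesssim 2^{j(1+3/p-s)}\|b\|_{\dot B^s_{p,1}}$ pointwise in $j$. Whatever H\"older split you use, scaling then leaves exactly $\sum_j 2^{j/2}\|\dot\Delta_j X\|_{L^2}=\|X\|_{\dot B^{1/2}_{2,1}}$.

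For $X=u^*$ the low-frequency part of this sum is not controlled. The $|x|^{-1}$ tail puts $u^*$ only in $\dot B^{1/2}_{2,\infty}$, and $\dot H^{N+2}$ helps only for $j\geq1$. The same applies to $a_\eps$: Theorem~\ref{LMSNSKThm-intro} controls it only in $\dot B^{1/2}_{2,\infty}\cap\dot H^{N+3}$, so your claim that it controls $\dot B^{1/2}_{2,1}$ is not justified.

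The remedy is to keep the $\ell^1$ sum over the blocks of $b$ and sum in $j$ first. Using $\|\dot\Delta_jX\|_{L^p}\lesssim 2^{j(1-3/p)}\|X\|_{\dot B^{1/2}_{2,\infty}}$, one gets
\[
\sum_j 2^{j(s-2)}\|\dot S_{j-1}\nabla b\|_{L^\infty}\|\dot\Delta_jX\|_{L^p}
\lesssim\|X\|_{\dot B^{1/2}_{2,\infty}}\sum_k 2^{k(1+\frac3p)}\|\dot\Delta_k b\|_{L^p}\sum_{j\geq k+2}2^{j(s-1-\frac3p)}
\lesssim\|X\|_{\dot B^{1/2}_{2,\infty}}\|b\|_{\dot B^s_{p,1}},
\]
because $s-1-3/p<0$.

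The paraproduct $T_b\nabla X$ inside $b\cdot\nabla X$ is equally bad in the symmetric lemma, and your sketch does not treat it explicitly. It is handled the same way, with inner sum $\sum_{j\geq k+2}2^{j(s-3/p)}$; this is where $s<3/p$ is really used. The remaining paraproducts $T_X\nabla b$ and $T_{\nabla X}b$, and both remainders, need only $X\in\mathcal X$, as you say.

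With these one-sided bounds the rest of your plan goes through. This includes $\mathcal V_\eps$: there the stationary factor enters at the $\dot B^{3/2}_{2,1}$ level, so even your pointwise-in-$j$ argument closes.
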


\begin{proof}
Notice the decomposition of $\mathcal C_\eps$ in \eqref{Eq:CerrorExpanded}. By Lemma \ref{Lem:HybridProduct} and \eqref{Eq:AuxiliaryXBounds}, each term containing $b_\eps$ is bounded by $C\delta_0\bigl(\|z_\eps\|_{L^r_t\dot B^s_{p,1}}+\|\mathbb Qw_\eps\|_{L^r_t\dot B^s_{p,1}}\bigr)$. Since every remaining term contains $a_\eps$, it follows from \eqref{Eq:aStationaryError} that these remaining terms can be bounded by $C\eps^2\delta_0$. Thus, we have \eqref{Eq:CerrorEstimate}.

 For $\mathcal V_\eps$, we shall make use of
    \begin{align*}
        \Psi(\rho_\eps)-\Psi(\rho_\eps^*)=O(\eps\sigma_\eps),
        \qquad\Psi(\rho_\eps)-\bar\Psi=O(\eps\sigma_\eps+\eps^2\sigma_\eps^*).
    \end{align*}
The first term in \eqref{Eq:Verror} is estimated by multiplying the smooth stationary factor $\A u_\eps^*$ with $\eps\sigma_\eps$.
The second term follows from
\eqref{Eq:HybridViscosity}. Then the inequality \eqref{Eq:AuxiliaryXBounds} gives \eqref{Eq:VerrorEstimate}.
\end{proof}

\begin{prop}[Convergence of the incompressible projection]\label{Prop:IncompressibleError}
Under the assumption \eqref{Eq:IndexRange}, we have
\begin{equation}\label{Eq:IncompressibleErrorConvergence}
\|\mathbb Pw_\eps-\widetilde u\|_{L^r_t\dot B^s_{p,1}}
\leq C\eps^{\beta(p,r)}\delta_0.
\end{equation}
\end{prop}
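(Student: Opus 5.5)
The estimate follows from maximal regularity for the heat equation satisfied by $z_\eps$, combined with the source bounds of Lemma~\ref{Lem:ErrorSources} and the acoustic convergence of Proposition~\ref{Prop:AcousticConvergence}. Recall that
\[
\partial_tz_\eps-\bar\mu\Delta z_\eps=-\mathbb P\mathcal C_\eps+\mathbb P\mathcal V_\eps,\qquad z_\eps(0)=u^*-\mathbb Pu_\eps^*.
\]
By Duhamel's formula,
\[
z_\eps(t)=e^{\bar\mu t\Delta}z_\eps(0)+\int_0^te^{\bar\mu(t-\tau)\Delta}\mathbb P(\mathcal V_\eps-\mathcal C_\eps)(\tau)\,{\rm d}\tau .
\]

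\textbf{Step 1: the inhomogeneous part.} On each dyadic block the heat kernel gives $\|\dot\Delta_je^{\bar\mu t\Delta}f\|_{L^p}\le Ce^{-c2^{2j}t}\|\dot\Delta_jf\|_{L^p}$. Moreover, $\|e^{-c2^{2j}\cdot}\|_{L^1_t}\lesssim2^{-2j}$. Young's inequality in time, applied blockwise and summed in $\ell^1$, gives the maximal-regularity bound
\[
\Big\|\int_0^te^{\bar\mu(t-\tau)\Delta}\mathbb Pg\,{\rm d}\tau\Big\|_{L^r_t\dot B^s_{p,1}}\le C\|g\|_{L^r_t\dot B^{s-2}_{p,1}}.
\]
Here $\mathbb P$ is bounded on $\dot B^{s-2}_{p,1}$ because it is a zero-order Mikhlin multiplier and $1<p<\infty$.

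\textbf{Step 2: the initial datum.} By \eqref{machest-intro} (applied with $k=N+1$), $\|z_\eps(0)\|_{\dot B^{1/2}_{2,\infty}\cap H^{N+3}}\le C\eps^2\delta_0^2$. The free heat flow satisfies
\[
\|\dot\Delta_je^{\bar\mu t\Delta}z_0\|_{L^p}\lesssim e^{-c2^{2j}t}2^{3j\delta_p}\|\dot\Delta_jz_0\|_{L^2},
\]
and therefore
\[
\|e^{\bar\mu t\Delta}z_0\|_{L^r_t\dot B^s_{p,1}}\lesssim\sum_j2^{j(s+3\delta_p-2/r)}\|\dot\Delta_jz_0\|_{L^2}.
\]
For $j\le0$ this sum is controlled by the $\dot B^{1/2}_{2,\infty}$ norm, since $s+3\delta_p-\tfrac2r-\tfrac12>0$ follows from $s>\tfrac12+\tfrac2r$. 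For $j\ge1$ it is controlled by the $H^{N+3}$ norm. This piece is therefore $O(\eps^2\delta_0)$.

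\textbf{Step 3: closing the estimate.} Insert \eqref{Eq:CerrorEstimate}, \eqref{Eq:VerrorEstimate}, and the bound $\|\mathbb Qw_\eps\|_{L^r_t\dot B^s_{p,1}}\le C\eps^{\beta(p,r)}\delta_0$ from Proposition~\ref{Prop:AcousticConvergence}. This yields
\[
\|z_\eps\|_{L^r_t\dot B^s_{p,1}}\le C\delta_0\|z_\eps\|_{L^r_t\dot B^s_{p,1}}+C\delta_0\big(\eps^{\beta(p,r)}\delta_0+\eps^2+\eps\big).
\]
Since $\beta(p,r)<1$ and $\eps\le1$, we have $\eps+\eps^2\le2\eps^{\beta(p,r)}$. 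Taking $\delta_0$ small, the first term on the right is absorbed into the left, which gives \eqref{Eq:IncompressibleErrorConvergence}.

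\textbf{Main obstacle.} The absorption step requires knowing a priori that $\|z_\eps\|_{L^r_t\dot B^s_{p,1}}<\infty$. I would get this from \eqref{Eq:AuxiliaryXBounds}, since $z_\eps=\mathbb Pw_\eps-\widetilde u$ and both $w_\eps$ and $\widetilde u$ belong to $L^r_t\dot B^s_{p,1}$ with norm at most $C\delta_0$. If that route fails, I would run the argument on $[0,T]$ and let $T\to\infty$.

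A second point needs care: the $\mathcal V_\eps$ bound relies on $\eps\sigma_\eps$ in $L^r_t\dot B^s_{p,1}$, which in turn uses the acoustic estimate. The bound should be checked against how \eqref{Eq:VerrorEstimate} was derived, but it only contributes a source term of size $O(\eps\delta_0)$, which is harmless.
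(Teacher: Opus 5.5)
Your proposal is correct and is essentially the paper's proof. Both use Duhamel for the heat equation satisfied by $z_\eps$, dyadic heat maximal regularity, and the $O(\eps^2)$ bound on $z_\eps(0)=u^*-\mathbb Pu_\eps^*$ from \eqref{machest-intro}, followed by absorption via Lemma~\ref{Lem:ErrorSources} and Proposition~\ref{Prop:AcousticConvergence}. The only difference is that you bound the free heat evolution blockwise with Bernstein, whereas the paper goes through $\|z_\eps(0)\|_{\dot B^{s-2/r}_{p,1}}$ and an embedding; the two are equivalent.

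One caveat, which the paper shares: blockwise Young in time followed by $\ell^1$ summation only bounds the Duhamel term by the Chemin--Lerner quantity $\sum_j 2^{j(s-2)}\|\dot\Delta_j g\|_{L^r_tL^p}$. For $r>1$ this is in general strictly larger than $\|g\|_{L^r_t\dot B^{s-2}_{p,1}}$. To get the bound with the Bochner norm on the right, pair $\sum_j 2^{js}\|\dot\Delta_j(\cdot)\|_{L^p}$ with $0\le h\in L^{r'}_t$ and use $\sup_j\bigl(2^{2j}e^{-c2^{2j}|\cdot|}\ast h\bigr)\lesssim\mathcal M h$. Here $\mathcal M$ is the Hardy--Littlewood maximal function, which is bounded on $L^{r'}$ because $r<\infty$.
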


\begin{proof}
The dyadic heat estimate gives the maximal-regularity bound
\begin{align}
\|z_\eps\|_{L^r_t\dot B^s_{p,1}}
\leq{}&C\|z_\eps(0)\|_{\dot B^{s-2/r}_{p,1}}
+C\|\mathcal C_\eps\|_{L^r_t\dot B^{s-2}_{p,1}}
+C\|\mathcal V_\eps\|_{L^r_t\dot B^{s-2}_{p,1}}.
\label{Eq:HeatMaximalRegularity}
\end{align}
Indeed, on the $j$-th dyadic block, the heat kernel has \(L^1_t\) norm \(C2^{-2j}\), and the initial heat evolution has
\(L^r_t\) norm \(C2^{-2j/r}\). 
Combining \eqref{machest-intro}, \eqref{Eq:zInitial}, low-high interpolation, and the embedding
\(\dot B^{s-2/r+3\delta_p}_{2,1}\hookrightarrow\dot B^{s-2/r}_{p,1}\), we obtain
\begin{equation}\label{Eq:zInitialEstimate}
\|z_\eps(0)\|_{\dot B^{s-2/r}_{p,1}}
\leq C\eps^2\delta_0.
\end{equation}
Substituting \eqref{Eq:zInitialEstimate} and the estimes in Lemma \ref{Lem:ErrorSources} into
\eqref{Eq:HeatMaximalRegularity}, choosing \(\delta_0\) small enough to absorb the term involving \(z_\eps\), and using Proposition \ref{Prop:AcousticConvergence}, we obtain
\begin{align*}
    \|z_\eps\|_{L^r_t\dot B^s_{p,1}}
\leq C(\eps^2+\eps+\eps^{\beta(p,r)})\delta_0
\leq C\eps^{\beta(p,r)}\delta_0,
\end{align*}
which, together with \eqref{Eq:zDef}, gives \eqref{Eq:IncompressibleErrorConvergence}.
\end{proof}

\subsection{Proof of Theorem \ref{NNSKThm-intro}}
Finally, based on the conclusions obtained in Subsections \ref{S4.2}  and the convergence bounds in Subsection \ref{S4.4}, we complete the proof of Theorem \ref{NNSKThm-intro}.

\begin{proof}
For the global existence and uniqueness of $(\rho_\epsilon,u_\epsilon)$, the estimate \eqref{MainEng-intro}, and the time-decay estimate \eqref{1.12.1}, we have shown their detailed proofs in Subsection \ref{S4.2}.
For the low Mach number limit, Proposition \ref{Prop:AcousticConvergence} gives the estimates for \(\sigma_\eps\) and \(\mathbb Qw_\eps\), while  Proposition \ref{Prop:IncompressibleError} gives the estimate for
\(\mathbb Pw_\eps-\widetilde u\).  Their sum yields \eqref{NNSKest-intro}. Thus, we complete the proof of Theorem \ref{NNSKThm-intro}. 
\end{proof}

\bigskip 
{\bf Acknowledgements:} 
J. Ni would like to express his gratitude to Prof. Renjun Duan for some communications on the topic of stationary solutions this year.
L. Wang is partially supported by NSFC (Grant No. 12601409), Basic Research Program of Jiangsu (Grant No. BK20240058),  China Postdoctoral Science Foundation (Grant No. 2024M751365) and Jiangsu Funding Program for Excellent Postdoctoral Talent (Grant No. 2023ZB071).
Z. Zhang  is partially supported by NSFC (Grant Nos. 12471215 and 12331007) and Taishan Scholars Program (tsqn202507101).

\vspace{2mm}

\textbf{Conflict of interest.}  The authors do not have any possible conflicts of interest.

\vspace{2mm}

\textbf{Data availability statement.}
 Data sharing is not applicable to this article as no data sets were generated or analyzed during the current study.

\bibliographystyle{plain}

\begin{thebibliography}{aaa}

 

	










\bibitem{Al-2006}
T. Alazard,
  Low Mach number limit of the full Navier--Stokes equations,
  \emph{Arch. Ration. Mech. Anal.} \textbf{180 (1)} (2006), 1--73.


\bibitem{An-Mc-Wh-1998}
D. M. Anderson, G. B. McFadden, A. A. Wheeler,
 Diffuse-interface methods in fluid mechanics,
 \emph{Annu. Rev. Fluid Mech.} \textbf{30} (1998), 139--165.



\bibitem{BCD-Book-2011}
H. Bahouri, J.-Y. Chemin, R. Danchin,
\newblock \emph{Fourier Analysis and Nonlinear Partial Differential Equations},
\newblock Grundlehren der Mathematischen Wissenschaften, vol. 343, Springer, Heidelberg, 2011.


\bibitem{BYZ-2014}
D. Bian, L. Yao and C. Zhu, Vanishing capillarity limit of the compressible fluid models of Korteweg type to the Navier--Stokes equations. \emph{SIAM J. Math. Anal.} \textbf{46} (2014), 1633--1650.



\bibitem{Ch-2014}
F. Charve,
 Local in time results for local and non-local capillary Navier--Stokes systems with large data,
 \emph{J. Differential Equations} \textbf{256 (7)} (2014), 2152--2193.

\bibitem{Ch-Da-Xu-2021}
F. Charve, R. Danchin, J. Xu,
 Gevrey analyticity and decay for the compressible Navier--Stokes system with capillarity,
 \emph{Indiana Univ. Math. J.} \textbf{70 (5)} (2021), 1903--1944.

\bibitem{Ch-Ha-2011}
F. Charve, B. Haspot,
  Convergence of capillary fluid models: from the non-local to the local Korteweg model,
  \emph{Indiana Univ. Math. J.} \textbf{60 (6)} (2011), 2021--2059.

\bibitem{Ch-Zh-2014}
Z. Chen, H. Zhao,
\newblock Existence and nonlinear stability of stationary solutions to the full compressible Navier--Stokes--Korteweg system,
\newblock \emph{J. Math. Pures Appl. (9)} \textbf{101 (3)} (2014), 330--371.

\bibitem{Ch-Ko-2019}
N. Chikami, T. Kobayashi,
Global well-posedness and time-decay estimates of the compressible Navier--Stokes--Korteweg system in critical Besov spaces,
  \emph{J. Math. Fluid Mech.} \textbf{21 (2)} (2019), Paper No. 31.

 

\bibitem{Cu-Ok-Ts-2022}
J. Cunanan, T. Okabe, Y. Tsutsui,
  Asymptotic stability of stationary Navier--Stokes flow in Besov spaces,
  \emph{Asymptot. Anal.} \textbf{129 (1)} (2022), 29--50.

\bibitem{Da-2002}
R. Danchin, Zero Mach number limit in critical spaces for compressible Navier--Stokes equations,
\emph{Ann. Sci. \`Ec. Norm. Sup\'er. (4)} \textbf{35 (1)} (2002), 27--75.

\bibitem{Da-De-2001}
R. Danchin, B. Desjardins,
Existence of solutions for compressible fluid models of Korteweg type,
\emph{Ann. Inst. H. Poincar\'e Anal. Non Lin\'eaire} \textbf{18 (1)} (2001), 97--133.

\bibitem{Da-He-2016}
R. Danchin, L. He,
  The incompressible limit in $L^p$ type critical spaces,
  \emph{Math. Ann.} \textbf{366 (3--4)} (2016), 1365--1402.

\bibitem{Da-Mu-2017}
R. Danchin, P. B. Mucha,  Compressible Navier--Stokes system: large solutions and incompressible limit,
  \emph{Adv. Math.} \textbf{320} (2017), 904--925.

\bibitem{De-2024}
N. Deguchi,
  On the stability of stationary compressible Navier--Stokes flows in 3D,
  \emph{Math. Ann.} \textbf{390 (3)} (2024), 4361--4404.

\bibitem{De-2025}
N. Deguchi,
  Low Mach number limit for the compressible Navier--Stokes equation with a stationary force,
  \emph{J. Math. Pures Appl. (9)} \textbf{214} (2026), Paper No. 103951.

\bibitem{De-Gr-1999}
B. Desjardins, E. Grenier,
  Low Mach number limit of viscous compressible flows in the whole space,
  \emph{Proc. Roy. Soc. London Ser. A} \textbf{455} (1986) (1999), 2271--2279.

\bibitem{Du-Se-1985}
J. E. Dunn, J. Serrin,
  On the thermomechanics of interstitial working,
  \emph{Arch. Ration. Mech. Anal.} \textbf{88 (2)} (1985), 95--133.

\bibitem{Fe-No-2009}
E. Feireisl, A. Novotn\'y,
  \emph{Singular Limits in Thermodynamics of Viscous Fluids},
  Birkh\"auser, Basel, 2009.

\bibitem{Fu-2024}
M. Fujii,
  Low Mach number limit of the global solution to the compressible Navier--Stokes system for large data in the critical Besov space,
  \emph{Math. Ann.} \textbf{388 (4)} (2024), 4083--4134.

\bibitem{Fu-Li-2025}
M. Fujii, Y. Li,
Low Mach number limit for the global large solutions to the 2D Navier--Stokes--Korteweg system in the critical $\widehat{L^p}$ framework,
  \emph{Calc. Var. Partial Differential Equations} \textbf{64 (1)} (2025), Paper No. 29.

\bibitem{HLY-2024}
 K. Hao, Y. Li, R. Yin, Low Mach number limit of the full compressible Navier--Stokes--Korteweg equations with general initial data. \emph{Dyn. Partial Differ. Equ.} \textbf{21 (3)} (2024),  281--304.


\bibitem{Ha-Li-1994}
H. Hattori, D. Li,
  Solutions for two-dimensional system for materials of Korteweg type,  \emph{SIAM J. Math. Anal.} \textbf{25 (1)} (1994), 85--98.

\bibitem{Ha-Li-1996}
H. Hattori, D. Li,
 Global solutions of a high-dimensional system for Korteweg materials, \emph{J. Math. Anal. Appl.} \textbf{198 (1)} (1996), 84--97.

\bibitem{Ho-1998}
D. Hoff, The zero-Mach limit of compressible flows,
  \emph{Comm. Math. Phys.} \textbf{192 (3)} (1998), 543--554.

\bibitem{Is-1987}
H. Isozaki, Singular limits for the compressible Euler equation in an exterior domain,
\emph{J. Reine Angew. Math.} \textbf{381} (1987), 1--36.



\bibitem{JX-2022}
Q. Ju and J. Xu, Zero-Mach limit of the compressible Navier--Stokes--Korteweg equations. \emph{J. Math. Phys.} \textbf{63} (2022), Paper No. 111503, 21 pp.



\bibitem{Ka-Ko-Sh-2019}
K. Kaneko, H. Kozono, S. Shimizu,
  Stationary solution to the Navier--Stokes equations in the scaling invariant Besov space and its regularity,
  \emph{Indiana Univ. Math. J.} \textbf{68 (3)} (2019), 857--880.


\bibitem {commutator1} T. Kato, G. Ponce,  Commutator estimates and the Euler and Navier--Stokes equations,  {\it Commun. Pure Appl. Math.} {\bf 41} (1988), 891--907.

 

\bibitem{Ka-Sh-Xu-2021}
S. Kawashima, Y. Shibata, J. Xu,
  The $L^p$ energy methods and decay for the compressible Navier--Stokes equations with capillarity,
  \emph{J. Math. Pures Appl. (9)} \textbf{154} (2021), 146--184.


\bibitem {commutator2} C.-E. Kenig, G. Ponce, L. Vega, Well-posedness of the initial value problem for the Korteweg-de Vries equation, {\it J. Amer. Math. Soc.} {\bf 4} (1991), 323--347.

\bibitem{Kl-Ma-1981}
S. Klainerman, A. Majda,
 Singular limits of quasilinear hyperbolic systems with large parameters and the incompressible limit of compressible fluids,
  \emph{Comm. Pure Appl. Math.} \textbf{34 (4)} (1981), 481--524.

\bibitem{Kl-Ma-1982}
S. Klainerman, A. Majda,
  Compressible and incompressible fluids,
  \emph{Comm. Pure Appl. Math.} \textbf{35 (5)} (1982), 629--651.

\bibitem{Ko-Sv-2011}
A. Korolev, V. {S}ver\'ak,
 On the large-distance asymptotics of steady state solutions of the Navier--Stokes equations in 3D exterior domains,
 \emph{Ann. Inst. H. Poincar\'e Anal. Non Lin\'eaire} \textbf{28 (2)} (2011), 303--313.

\bibitem{Ko-1901}
D. J. Korteweg,
 Sur la forme que prennent les \`equations du mouvement des fluides si l'on tient compte des forces capillaires caus\'ees par des variations de densit\'e,
\emph{Arch. N\'eerlandaises Sci. Exactes Nat. S\'er. II} \textbf{6} (1901), 1--24.

\bibitem{Ko-2008}
M. Kotschote,
Strong solutions for a compressible fluid model of Korteweg type,
  \emph{Ann. Inst. H. Poincar\'e Anal. Non Lin\'eaire} \textbf{25 (4)} (2008), 679--696.

\bibitem{Ko-Sh-2023}
H. Kozono, S. Shimizu,
  Stability of stationary solutions to the Navier--Stokes equations in the Besov space,
  \emph{Math. Nachr.} \textbf{296 (5)} (2023), 1964--1982.



\bibitem{Li-2012}
Y. Li,
Global existence and optimal decay rate of the compressible Navier--Stokes--Korteweg equations with external force,
  \emph{J. Math. Anal. Appl.} \textbf{388 (2)} (2012), 1218--1232.

\bibitem{LY-2026}
Y. Li, R. Yin, Low mach number limit of the one-dimensional full compressible Navier--Stokes--Korteweg equations. \emph{Math. Ann.} \textbf{394 (2)} (2026), Paper No. 32, 49 pp.


\bibitem{Li-Yo-2016}
Y. Li, W.-A. Yong,
  Zero Mach number limit of the compressible Navier--Stokes--Korteweg equations,
  \emph{Commun. Math. Sci.} \textbf{14 (1)} (2016), 233--247.

\bibitem{Li-Ma-1998}
P.-L. Lions, N. Masmoudi,
  Incompressible limit for a viscous compressible fluid,
  \emph{J. Math. Pures Appl. (9)} \textbf{77 (6)} (1998), 585--627.

\bibitem{SL-2019}
 K. Sha, Y. Li, Low Mach number limit of the three-dimensional full compressible Navier--Stokes--Korteweg equations. 
 \emph{Z. Angew. Math. Phys.} \textbf{70 (6)} (2019), Paper No. 169, 16 pp.




\bibitem{Sh-Ta-2003}
Y. Shibata, K. Tanaka,
  On the steady flow of compressible viscous fluid and its stability with respect to initial disturbance,
  \emph{J. Math. Soc. Japan} \textbf{55 (3)} (2003), 797--826.

\bibitem{Sh-Ta-2007}
Y. Shibata, K. Tanaka,
  Rate of convergence of non-stationary flow to the steady flow of compressible viscous fluid,
  \emph{Comput. Math. Appl.} \textbf{53 (3--4)} (2007), 605--623.

\bibitem{Ta-Zh-2014}
Z. Tan, R. Zhang,
  Optimal decay rates of the compressible fluid models of Korteweg type,
  \emph{Z. Angew. Math. Phys.} \textbf{65 (2)} (2014), 279--300.

\bibitem{Ts-2016}
K. Tsuda,
\newblock Existence and stability of time periodic solution to the compressible Navier--Stokes--Korteweg system on $\mathbb R^3$,
\newblock \emph{J. Math. Fluid Mech.} \textbf{18 (1)} (2016), 157--185.





\bibitem{Uk-1986}
S. Ukai,
  The incompressible limit and the initial layer of the compressible Euler equation,
  \emph{J. Math. Kyoto Univ.} \textbf{26 (2)} (1986), 323--331.


\bibitem{Wa-Wa-2015}
W. Wang, W. Wang,
 Decay rates of the compressible Navier--Stokes--Korteweg equations with potential forces,
 \emph{Discrete Contin. Dyn. Syst.} \textbf{35 (1)} (2015), 513--536.

















\end{thebibliography}

\end{document}